\documentclass[11pt,letterpaper]{article}

\usepackage[affil-it]{authblk}
\usepackage[utf8]{inputenc}
\usepackage{amsmath}
\usepackage{amsfonts}
\usepackage{amssymb}
\usepackage{amsthm}
\usepackage{graphicx}
\usepackage{subcaption} 
\usepackage[left=2cm,right=2cm,top=2cm,bottom=2cm]{geometry}
\usepackage{enumerate}
\usepackage{appendix}
\usepackage{mathtools}
\usepackage{xcolor}
\usepackage{hyperref}
\usepackage{todonotes}

\newtheorem{theorem}{Theorem}[section]
\newtheorem{lemma}[theorem]{Lemma}
\newtheorem{prop}[theorem]{Proposition}

\theoremstyle{definition}
\newtheorem{definition}[theorem]{Definition}

\theoremstyle{remark}
\newtheorem{remark}[theorem]{Remark}

\DeclareMathOperator{\dv}{div}

\DeclareMathOperator{\tr}{tr}

\newcommand{\Dd}{\Delta}

\usepackage[maxbibnames=99,backend=bibtex,style=numeric]{biblatex}

\numberwithin{equation}{section}



\date{}
\addbibresource{literature.bib}

\begin{document}

\title{\bf Temperature dependent extensions of the Cahn-Hilliard equation} 

%

\author{Francesco De Anna$\,^1$, Chun Liu$\,^2$, Anja Schlömerkemper$\,^3$, Jan-Eric Sulzbach$\,^4$}
\affil{\small
    $\,^1$ Institute of Mathematics, University of Würzburg, Germany\\
    email: francesco.deanna@mathematik.uni-wuerzburg.de\\
    
    \vspace{0.3cm}
    $\,^2$ Illinois Institute of Technology, Chicago, US\\
    email: cliu124@iit.edu\\
    
     \vspace{0.3cm}
     $\,^3$ Institute of Mathematics, University of Würzburg, Germany\\
    email: anja.schloemerkemper@mathematik.uni-wuerzburg.de\\
    
     \vspace{0.3cm}
    $\,^4$ Illinois Institute of Technology, Chicago, US\\
    email: jsulzbach@hawk.iit.edu
}

\maketitle

\begin{abstract}

\noindent
The Cahn–Hilliard equation is a fundamental model that describes phase separation processes of binary mixtures.
In this paper we focus on the dynamics of these binary media, when the underlying temperature is not constant.

\noindent
The aim of this paper is twofold. We first derive two distinct models that extend the classical Cahn-Hilliard equation with an evolutionary equation for the absolute temperature.
Secondly, we analyse the local well-posedness of classical solution for one of these systems.

\noindent
Our modelling introduces the systems of PDEs by means of a general and unified formalism.
This formalism couples standard principles of mechanics together with the main laws of thermodynamics.
Our work highlights how certain assumptions on the transport of the temperature effect the overall physics of the systems.

\noindent
The variety of these thermodynamically consistent models opens the question of which one should be more appropriate.
Our analysis shows that one of the derived models might be more desirable to the well-posedness theory of classical solutions, a property that might be natural as a selection criteria.

\noindent
We conclude our paper with an overview and comparison of our modelling formalism with some equations, which were previously derived in literature.

    %
    %
    %
\end{abstract}

\section{Introduction}
During the last decades, the mathematical theory of phase separation has gained a fundamental role in understanding the behaviour of binary alloys, mixtures of materials and biological phenomena.
Phase separations are processes that arise in many scientific, engineering and industrial applications, such as phase-field crystals \cite{MR3778460}, coarsening in materials \cite{SunAndrewsThronton} and segregation-like phenomena \cite{BoettingerWarrenBeckermannKarma}. 

\smallskip
\noindent
This paper addresses a diffuse interface theory for the dynamics of binary materials whose temperature is not constant.
 Multi-phase flows are indeed commonly addressed by two major conventional theories, the sharp-interface model and the diffuse-interface model. The sharp-interface model treats separately the hydrodynamics of a binary system whose phase configurations are segregated by a free hyper-surface. The diffuse-interface model, on the other hand, recasts the surface description of the interface by introducing a thin layer, a region in which mixture of the pure phases can occur. Although the sharp interface provides a reliable model for pure segregation, it breaks down when the hyper-surface of separation is effected by topological changes, such as the merging and splitting \cite{MR1650795}. These phenomena are however well captured by the diffuse interface theory \cite{MR3745174, MR2358438,MR2260713,MR4285007}, which we address in this paper.

\smallskip
\noindent
 We consider materials whose temperature varies with time. We are motivated by certain type of media and biological phenomena, in which the binary structure of the phase field for the compound shall be further supplemented by an understanding of the thermal configuration for the overall material. Thermal effects indeed play often a major role in the dynamics. A typical example is provided by dendrites, i.e. protoplasmic extensions of nerve cells. The membranes of these cells are surfaces that grow in a liquid and evolve in relation to the thermodiffusivity and the mean curvature of the membranes themselves \cite{Dendrite}.

\smallskip
\noindent
This paper extends some classical formalism in fluid-mechanics. To this end, we supplement the equation of the order parameter with an equation for the evolution of the temperature (cf. Section \ref{intro:sec-goal-of-this-paper} and Section \ref{sec:modelling}). We show that certain hypotheses on the behavior of the temperature can lead to different models, which are thermodynamically consistent.
In particular, we show that different transport properties of the temperature and relations of the free energy lead to different models. Furthermore, our current analytical techniques can deal with a specific case of these models (cf.~Theorem \ref{thm:intro-thm2} and Section \ref{sec:modelling}), therefore our analysis make this model more desirable at the current stage.


\smallskip
\noindent
To introduce the main ideas and concepts of our results, we shall first recall some basics about the considered model when the temperature is assumed constant (isothermal process).
In this case, the classical Cahn–Hilliard equation can be written in the following (dimensionless) form \cite{Cahn59,Cahn58,CahnH59}: 
\begin{equation}\label{introduction:isotehrmal-cahn-hilliard}
    \partial_t \phi  = \Delta  \mu, \qquad \mu =  -\Delta \phi + W'(\phi),
\end{equation}
on a domain $(t,x) \in (0,T)\times \Omega$ with $\Omega $ being a subset of $\mathbb{R}^2$ or $\mathbb{R}^3$ (in our analysis in Section \ref{sec:well-posedness}, we eventually consider $\Omega = \mathbb{R}^d$, $d\geq 1$). Here $ \phi = \phi(t,x)$ stands for the phase field variable (also known as order parameter), modelling the local concentration of two components for the binary mixture. The value of $\phi$ ranges between $-1$ and $1$, two values that shall describe the pure states of the material. The generalised chemical potential is denoted by $\mu=\mu(\phi)$, while $W=W(\phi)$ stands for the bulk energy density of the system. Commonly the energy density $W$ owns a double well structure and attains minima in the pure phases $\phi = \pm 1$.  

\noindent
A typical thermodynamically relevant form for the density $W$ is a double-well-structured logarithmic potential \cite{MR2862028, MR4278119,  MR3986249, MR4164495}. However, it is rather common \cite{MR4000136,MR2585560,Marveggio21,Miranville,MR2151731,MR2260713} to relax the singular form of the logarithmic energy, through a standard double well-potential (cf.~Section \ref{sec:thermodynamic-relations} for further details)
\begin{equation*}
    W(\phi) = \frac{1}{4\varepsilon}\big(\phi^2-1\big)^2,
\end{equation*}
where $\varepsilon>0$ denotes the thickness of the interface.

\noindent 
When $\Omega$ is bounded and has a regular boundary, then one shall supplement equation \eqref{introduction:isotehrmal-cahn-hilliard} with the following Neumann boundary conditions:
\begin{equation*}
    \partial_n \phi = 0 ,\qquad \partial_n \mu = 0 \qquad\text{in} \quad  (0,T)\times \partial \Omega,
\end{equation*}
where $n$ denotes the outer normal unit vector to the boundary $\partial \Omega$. The first condition ensures that the diffuse interface is orthogonal to the boundary of the domain, while the second is a non-flux condition, which allows the conservation of the phase field on the domain.

\smallskip
\noindent 
The modelling and mathematical treatment of the Cahn-Hilliard system has been extensively addressed in the literature during the past decades.
 For further information on the isothermal setting, we mention for instance  \cite{Abels12,Miranville,NOVICKCOHEN2008201,taylor1994linking}, and the references therein. 

\smallskip
\noindent
Several attempts and developments have been introduced in the last decades, in order to couple the Cahn-Hilliard equations \eqref{introduction:isotehrmal-cahn-hilliard} with an equation for the absolute temperature $\theta = \theta(t,x)>0$, which we present in the next section.

\subsection{A brief overview of related literature}

%


\smallskip
\noindent
The first work incorporating the effect of temperature in the theory of phase separation is due to Caginalp (cf.~\cite{MR998924, Caginalp90, MR1643668}). In its dimensionless form, the model proposed by Caginalp couples the Cahn-Hilliard \eqref{introduction:isotehrmal-cahn-hilliard} with a heat equation for the temperature. Caginalp was in particular motivated by several applications, such as the Stefan problem for the evolution of a solid-liquid interface (for instance ice and water) and Hele-Shaw type flow between two fluids with different dynamic viscosities (for instance air and oil). 
Although the system was not originally related to the law of thermodynamics, the equations of Caginalp have been extensively investigated in literature, both from an analytical point of view \cite{MR2911120,MR2148283,MR4044385,MR3003971}, as well as from numerical simulations \cite{MR3036129, MR3985379}. A variation of this model has been used to study dynamical undercooling at the liquid-solid interface, as well as asymptotically relating to the free boundary model (sharp interface) \cite{MR2424950}.

\smallskip
\noindent 
A further development for the thermodynamics of phase-field models was proposed by Alt and Pawlow in \cite{alt1992mathematical}. Their modelling approach is based on the Ginzburg-Landau theory of phase separation combined with the non-equilibrium thermodynamics developed in \cite{Groot}. They proposed a family of system of PDEs for isotropic materials for a model of phase separation during continuous cooling.
One of their major interest was to ensure the consistency of the equations with the main laws of thermodynamics. In Section \ref{sec:conclusion} we recall their equations (cf.~\eqref{eq:alt-pawlow}) and provide a comparison with the modelling of our paper.


\smallskip
\noindent 
Along a similar approach, Miranville and Schimperna \cite{MR2151731} made use of a theory on micro-force balance to extend the models derived by Alt and Pawlow. The most compelling reason for their modelling was related to non-isotropic materials, whose state is close to thermodynamic equilibrium. 
A number of recent works have been dedicated to the analysis of these equations (cf. \cite{MR4000136,MR2585560,Marveggio21}).
In particular, Marveggio and Schimperna \cite{Marveggio21} analytically treated a sub-case of the system in \cite{MR2151731}, where under suitable conditions, the authors show the existence of weak solutions and entropy solutions of the problem.
In our conclusion of Section \ref{sec:conclusion}, we show how their system can be indeed determined by our modelling technique (cf.~\eqref{Miranville-Schimperna-eq}).

\noindent
For further information on the isothermal setting, we mention for instance  \cite{Abels12,Miranville,NOVICKCOHEN2008201,taylor1994linking}, and the references therein. 

\subsection{Goal of this paper and main results}\label{intro:sec-goal-of-this-paper}

The temperature $\theta = \theta(t,x)$ is one of the main characters of this paper. This  thermodynamic state variable changes the overall structure of our modelling technique. In particular, certain assumptions on the temperature lead to different type of equations. The aim of this paper is therefore two-fold: 
\begin{itemize}
    \item Elucidate these assumptions on the temperature
    in a formal way, making use of standard techniques of mechanics and thermodynamics, to derive several models that are consistent with the main laws of thermodynamics (cf.~Section \ref{sec:modelling}).  
    \item Show that one of these models is more amenable at the level of analysis (cf.~Section \ref{sec:well-posedness}), providing a well-posedness theory of local-in-time classical solutions.
\end{itemize}

\smallskip\noindent 
The first goal is achieved making use of a well-known formalism, based on techniques of classical mechanics and non-equilibrium thermodynamics, such as the free energy of the media (cf.~Section \ref{sec:thermodynamic-relations}), the kinematic and transport of the state variables (cf.~Section \ref{sec:kinematics}), the least action principle (cf.~Section \ref{sec:least-action-principle}), the rate of dissipation and the maximum dissipation principle (cf.~Section \ref{sec:onsager}), and finally the first and second law of thermodynamics (cf.~Section \ref{sec:derivation-temperature-eq}). 

\noindent 
The major novelties of our modelling concern the least action principle coupled with the transport of the temperature, which provide the conservative forces of the motion  (cf.~Theorem \ref{thm:conservative-forces}), by the variation related to the following free energy density
\begin{equation*}
    \psi(\phi, \nabla \phi,\theta) = 
    \epsilon(\theta) 
    \frac{|\nabla \phi |^2}{2} + 
    \frac{1}{\epsilon(\theta)}
    \bigg(
        \frac{(\phi^2-1)^2}{4} + c(\theta)\phi
    \bigg) - k_B \theta \ln \theta.
\end{equation*}
Details about this energy are provided in Section \ref{sec:thermodynamic-relations}.
Here we highlight how certain assumptions on the temperature come into play and change the overall structure of the final system. The variation of the action is indeed performed along the flow generated by the effective microscopic velocity of the phase field $\phi$. More precisely, since $\phi$ is a conserved quantity, the Cahn-Hilliard equation can always be recasted as a continuity equation with respect to a velocity field $u(\phi, \theta)$ (cf.~Section \ref{sec:kinematics}). 
A natural question relates therefore on how the velocity $u$ and its flow effect the temperature. In this article we investigate two main possibilities:
\begin{itemize}
    \item[(A1)] The temperature is transported by the flow, when performing the variation of the action. Such assumption is adopted and motivated by earlier applications, as in ideal gases (cf.~\cite{LS21}).
    
    \item[(A2)] The temperature is not effected by the velocity $u$ and is determined by the background, during the variation of the action. Here $u$ represents uniquely the dynamic of the mixture of the two pure states $\phi = \pm 1$, without giving information on the temperature evolution.
\end{itemize}
This article formally shows that each of these assumptions leads to a different set of equations. Eventually, in our second result, we show that our second assumption (A2) leads to a system of PDEs, which is more treatable in terms of existence and uniqueness of local-in-time classical solutions (cf.~Theorem \ref{thm:intro-thm2}).
From a modelling point of view both assumptions (A1) and (A2) result in models consistent with the laws of thermodynamics. However, our analysis suggests that assumption (A2) is more favorable.

\smallskip
\noindent 
Our first result about our modelling can be summarised as follows.
\begin{theorem}\label{thm:intro-theorem1}
Assume that assumption (A1) holds true. Then the first and second laws of thermodynamics lead to the following extension of the Cahn-Hilliard equation:
\begin{equation*}
    \begin{cases}
        \partial_t \phi - \alpha \Delta \partial_t \phi = \Delta \mu(\phi,\theta) + \dv\bigg({\displaystyle \frac{s(\phi,\theta)\nabla \theta}{\phi}}\bigg)\\
        \theta  \partial_t s(\phi, \theta) + 
        \theta \dv(u(\phi, \theta) s(\phi, \theta)) = 
        -\theta \dv \Big({\displaystyle\frac{ {\bf q}(\theta)}{\theta}}\Big) + \theta \Delta^*(\phi, \theta)
    \end{cases}
\end{equation*}
where the chemical potential $\mu$, the local entropy $s$, the velocity $u$, the heat flux $\mathbf{q}$ and the entropy production $\theta \Delta^*$ are given by
\begin{alignat*}{8}
    \mu = \frac{\partial \psi}{\partial \phi} 
                            - \dv\Big(\frac{\partial \psi }{\partial \nabla \phi}\Big), \quad 
    s  = -\frac{\partial \psi}{\partial \theta}(\phi, \theta), \quad 
    u = -\frac{\nabla \mu}{\phi} - 
                \frac{s\nabla\theta}{\phi^2} - \alpha \frac{\nabla \partial_t \phi}{\phi},\quad 
    \mathbf{q}=\mathbf{q}(\theta) = -\kappa(\theta) 
    \nabla \theta, 
\end{alignat*}
while the entropy production $\theta \Delta^*$ satisfies
\begin{equation*}
    \theta \Delta^*(\phi, \theta) = 
                \phi^2|u|^2+\alpha |\partial_t\phi|^2 +\kappa\frac{|\nabla \theta|^2}{\theta}.
\end{equation*}
Assume that assumption (A2) holds true. Then the main laws of thermodynamics lead to the following extension of the Cahn-Hilliard equation:
\begin{equation*}
    \begin{cases}
        \partial_t \phi - \alpha \Delta \partial_t \phi = \Delta \mu(\phi,\theta)\\
        \theta  \partial_t s(\phi, \theta) = -
        \theta \dv 
        \Big(
             {\displaystyle
                \frac{ \mathbf{q}(\theta)}{\theta}
             }
        \Big) + \theta \Delta^*(\phi, \theta)
    \end{cases}
\end{equation*}
where $\mu$, $s$ and $\mathbf{q}$ maintain the same structure as above, while the entropy production $\theta \Delta^*$ is given by
\begin{equation*}
    \theta \Delta^*(\phi,\theta):= \big| \nabla \mu  + \alpha \nabla \partial_t \phi \big|^2+\alpha |\partial_t\phi|^2+\kappa\frac{|\nabla \theta|^2}{\theta}.
\end{equation*}
\end{theorem}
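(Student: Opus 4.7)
The plan is to follow the unified modelling program outlined in Sections \ref{sec:thermodynamic-relations}--\ref{sec:derivation-temperature-eq}, treating the two assumptions (A1) and (A2) in parallel and only branching at the points where the transport of the temperature enters. The starting ingredients are the free energy density $\psi(\phi,\nabla\phi,\theta)$ recalled in the statement, the continuity-form kinematics $\partial_t\phi+\dv(\phi u)=0$ produced in Section \ref{sec:kinematics} (which allows one to replace the conserved quantity $\phi$ by an effective microscopic velocity $u$), and the flow map $X_t$ generated by $u$. The balance law for $\phi$ will arise from equating the conservative force produced by the least action principle with the dissipative force produced by the maximum dissipation principle, whereas the evolution law for $\theta$ will follow from the first law of thermodynamics after identifying the dissipation density.

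For the conservative part, I would compute $\delta\mathcal{A}[\phi,\theta]=\delta\int_0^T\!\!\int_\Omega\psi\,dx\,dt$ along variations generated by perturbations of $X_t$. The variation of the $\phi$ and $\nabla\phi$ dependence is identical in the two cases and, after integration by parts, yields the chemical potential $\mu=\partial_\phi\psi-\dv(\partial_{\nabla\phi}\psi)$ and the factor $1/\phi$ coming from the continuity equation. Under assumption (A1) the temperature is Lie-dragged by $X_t$, so the variation of $\theta$ inside $\psi$ contributes an extra term proportional to $(\partial_\theta\psi)\,\nabla\theta=-s\,\nabla\theta$, which after dividing by $\phi$ produces exactly the additional conservative force $s\nabla\theta/\phi$ highlighted in Theorem \ref{thm:conservative-forces}. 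Under assumption (A2) the temperature is external to the Lagrangian variation so $\delta\theta\equiv 0$ and this extra term simply does not appear.

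Next I would invoke the maximum dissipation (Onsager) principle of Section \ref{sec:onsager}. Choosing a dissipation functional quadratic in $\phi u$ and in $\partial_t\phi$ (the latter supplying the regularizing coefficient $\alpha$ that yields the Laplacian $-\alpha\Delta\partial_t\phi$), and independently quadratic in $\nabla\theta/\theta$ with coefficient $\kappa(\theta)$ (giving Fourier's law $\mathbf{q}=-\kappa\nabla\theta$), the balance of conservative and dissipative forces determines $u$ explicitly. Under (A1) this gives $\phi u=-\nabla\mu-s\nabla\theta/\phi-\alpha\nabla\partial_t\phi$, which after substitution into the continuity equation reproduces the first equation of the (A1) system together with the divergence term $\dv(s\nabla\theta/\phi)$. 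Under (A2) the $s\nabla\theta/\phi$ contribution is absent, and one obtains the simpler $\phi$-equation $\partial_t\phi-\alpha\Delta\partial_t\phi=\Delta\mu$.

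Finally, the equation for $\theta$ arises from the first law of thermodynamics combined with the definition $s=-\partial_\theta\psi$. Differentiating the internal energy balance and using the identity $e=\psi+\theta s$, the conservative contributions cancel and one is left with $\theta\partial_t s$ (plus, in case (A1) only, the transport term $\theta\dv(us)$ inherited from the same Lie-dragging of $\theta$) on one side, and $-\theta\dv(\mathbf q/\theta)$ plus a quadratic dissipation term $\theta\Delta^\ast$ on the other. The explicit form of $\theta\Delta^\ast$ is read off by collecting the quadratic terms used in the Onsager functional: under (A1) these are directly $\phi^2|u|^2+\alpha|\partial_t\phi|^2+\kappa|\nabla\theta|^2/\theta$, while under (A2) the absence of the $s\nabla\theta/\phi$ correction allows the identity $\phi u=-\nabla\mu-\alpha\nabla\partial_t\phi$, which rewrites $\phi^2|u|^2$ as $|\nabla\mu+\alpha\nabla\partial_t\phi|^2$ and produces the stated formula. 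The non-negativity of each term automatically certifies the second law. I expect the main obstacle to be the variational calculation in case (A1): keeping track of how the co-transport of $\theta$ with $X_t$ interacts with the $\nabla\phi$-dependence of $\psi$ through an integration by parts in space and time, and verifying that the $1/\phi$ factors combine exactly so that the dissipative and conservative forces produced independently by the two variational principles match to give the single divergence form appearing in the theorem.
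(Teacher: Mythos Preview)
Your proposal is correct and follows essentially the same approach as the paper: the conservative force is obtained by varying the action along the flow map (Theorem~\ref{thm:conservative-forces}), the dissipative force from the Darcy-type functional via the maximum dissipation principle (Section~\ref{sec:onsager}), the $\phi$-equation from the force balance (Theorem~\ref{thm:eq-for-phi}), and the entropy production from imposing the first law on $\partial_t e$ (Theorem~\ref{thm:entropy-production}). Two minor points where the paper differs slightly in presentation: (i) Fourier's law $\mathbf{q}=-\kappa\nabla\theta$ is introduced as a separate constitutive assumption rather than being folded into the Onsager functional, and (ii) the entropy production $\theta\Delta^*$ is not simply ``read off'' from the dissipation but is computed by expanding $\partial_t e$ and requiring the total energy to be conserved; the two of course agree, but the paper's route makes the first-law consistency explicit rather than assumed.
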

\begin{remark}
    Some remarks of this result are in order. The positive parameter $\alpha>0$ provides an artificial dissipative term, that we introduce to support our analysis in Theorem \ref{thm:intro-thm2}. When $\alpha = 0$ we recover the original structure of the Cahn-Hilliard model in both (A1) and (A2). 
    
    \noindent
    Although the equation for the temperature is written in terms of the local entropy $s(\phi, \theta)$, one can further develop it to recover a form which depends only on $\theta$ and $\phi$. We provide this expansion in system \eqref{eq:phi-to-analyse}--\eqref{eq:phi-to-analyse} for the second model. 
    
    \smallskip
    \noindent 
    Both in (A1) and in (A2), the temperature equation is written in terms of the second law of thermodynamics on the local entropy $s = s(\phi, \theta)$ (i.e the Clausius-Duhem inequality). An equivalent form of this equation can be written in terms of the internal energy $e = e(\phi, \theta) = \psi(\phi, \theta) + \theta s(\phi, \theta)$ of the system:
    \begin{equation*}
        \begin{alignedat}{4}
            &(A1) \quad \partial_t e  + \dv \big(u\phi \mu \big) = \dv \big(\partial_{\nabla \phi}\psi \partial_t \phi + \theta u \partial_\theta \psi \big) - \dv{\bf q},\qquad &&\text{where}\quad \phi u = -\nabla \mu - 
                \frac{s\nabla\theta}{\phi} - \alpha \nabla \partial_t \phi,\\
            &(A2) \quad \partial_t e  + \dv \big(u\phi \mu \big) = \dv \big(\partial_{\nabla \phi}\psi \partial_t \phi \big) - \dv{\bf q},\qquad &&\text{where}\quad \phi u = -\nabla \mu - \alpha\nabla \partial_t \phi.
        \end{alignedat}
    \end{equation*}
\end{remark}

\medskip
\noindent 
Once concluded the modelling of Theorem \ref{thm:intro-theorem1}, we pass to investigate the well-posedness problem related to our derived systems. The equations in (A1) create several challenges in the analysis, which are mainly due to some nonlinearities on the $\phi$-equation, that depend on the gradient of the temperature, such as $\dv(s \nabla \theta/\phi)$. These terms, indeed, can not be generally defined in a classical sense, when localised in the region of mixing $\phi = 0$ (cf.~remark \ref{rmk:problem-with-A1}).

\noindent
On the other hand, these terms vanish under the assumptions of (A2), a fact which make the overall system more mathematically treatable. This principle is stated on our second main result. It asserts the existence and uniqueness of local-in-time classical solutions for the equations of (A2), under suitable condition for the initial data.

\smallskip
\noindent
The functional framework is that of Besov regularities $B^s = B^s_{2,1}$, which are defined on the whole space $\mathbb{R}^d$ (cf.~Section~\ref{section:Besov-spaces} for a detailed definition of these function spaces). We choose this setting instead of a Sobolev one, since it encompasses suitable a-priori estimates (cf.~Lemma \ref{lemma:a-priori-estimates-for-phi}, Lemma \ref{lemma:a-priori-estimates-for-theta} and Lemma \ref{lemma:action_of_smooth_function_on_B}), allowing to tackle the majority of the nonlinearities of our system.

\begin{theorem}\label{thm:intro-thm2}
Let $\phi_0\in B^{d/2+2}$, $\theta_0 - \bar{\theta} \in B^{d/2}$ for a positive constant temperature $\bar{\theta}$ and let $\alpha>0$, $\kappa>0$ and $\kappa_B>0$ be parameters. Then there exists a small positive constant $\varepsilon_0\in [0,1)$,  which depends only on the dimension $d$, such that for any $\epsilon,\,\bar{\theta},\,\phi_0,\,\theta_0$ satisfying the smallness conditions
\begin{equation}\label{eq:intro-smallness-condition}
\begin{aligned}
    \epsilon \| \Delta \phi_0 \|_{B^{\frac{d}{2}}}  
    &+ \frac{1}{\bar{\theta} }
    \max\Big\{1, \frac{1}{\epsilon}\Big\} (1+\| \phi_0 \|_{B^{\frac{d}{2}}})^4
    < \varepsilon_0
    \min\big\{1, \alpha,\kappa, \kappa_B \big\},\\
       \| \theta_0 - \bar{\theta}\|_{B^\frac{d}{2}} 
    &
    <
    \left(
    \min 
    \Big\{
        \epsilon^2,\, 
        \frac{1 }{1+\epsilon}
    \Big\}
    \frac{
         \varepsilon_0\min\{1,\alpha,\kappa, \kappa_B\}
    }{\bar{\theta}(1+\alpha)}
    \right)^2,
\end{aligned}
\end{equation}
the system \eqref{eq:phi-to-analyse}--\eqref{eq:theta-to-analyse} admits a unique classical solution $(\phi,\theta)$ on $(0,T)\times \mathbb{R}^d$, for a sufficiently small time $T>0$. Furthermore, the solution $(\phi,\theta)$ belongs to the function space
\begin{alignat*}{8}
    \phi &\in \mathcal{C}([0,T],B^{\frac{d}{2}+2})\cap 
    L^1(0,T;B^{\frac{d}{2}+4}),\qquad 
    &&\partial_t \phi &&&&\in \tilde L^2(0,T; B^{d/2+1}),
    \\
    \theta &\in 
    \mathcal{C}([0,T],B^{\frac{d}{2}})\cap 
    L^1(0,T;B^{\frac{d}{2}+2}),
    \qquad 
    &&\partial_t \theta &&&&\in \tilde L^2(0,T; B^{d/2+1}).
\end{alignat*}
\end{theorem}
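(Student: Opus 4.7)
The plan is to prove well-posedness via a fixed-point / Picard-iteration scheme tailored to the Besov framework $B^s = B^s_{2,1}(\mathbb{R}^d)$, exploiting the algebra structure of $B^{d/2}$ and the smoothing properties of the linearized principal operators. First I would reformulate the system as a perturbation around the constant background: set $\vartheta := \theta - \bar\theta$ and substitute into \eqref{eq:phi-to-analyse}--\eqref{eq:theta-to-analyse}. Separating leading-order from lower-order terms, the $\phi$-equation should take the pseudo-parabolic form
\begin{equation*}
(I - \alpha\Delta)\partial_t\phi + \epsilon(\bar\theta)\,\Delta^2\phi \;=\; F(\phi,\vartheta,\nabla\phi,\nabla\vartheta),
\end{equation*}
while the $\theta$-equation, after expanding $\partial_t s(\phi,\theta)$ in terms of $\partial_t\theta$ and $\partial_t\phi$, reduces to an inhomogeneous heat equation
\begin{equation*}
c_v(\bar\theta)\,\partial_t\vartheta - \kappa\,\Delta\vartheta \;=\; G(\phi,\vartheta,\partial_t\phi),
\end{equation*}
with $c_v(\bar\theta) = -\bar\theta\,\partial_\theta^2\psi(\cdot,\bar\theta)$ and nonlinear remainders $F,G$ that are either of lower differential order or carry a factor of $\vartheta$.

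Next, I would construct approximate solutions by Friedrichs mollification (cutting off in Fourier space to frequencies $\le n$) or, equivalently, by setting up a Picard iteration starting from $(\phi_0,0)$. At each step the principal part has constant coefficients, so solutions to the linearized problems are obtained in the target regularity class and controlled block-by-block using the dyadic a priori estimates of Lemma \ref{lemma:a-priori-estimates-for-phi} and Lemma \ref{lemma:a-priori-estimates-for-theta}. I would then establish uniform bounds in
$\mathcal{C}([0,T],B^{d/2+2})\cap L^1(0,T;B^{d/2+4})$ for $\phi$ and
$\mathcal{C}([0,T],B^{d/2})\cap L^1(0,T;B^{d/2+2})$ for $\vartheta$, with the time derivatives in $\tilde L^2(0,T;B^{d/2+1})$. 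The forcing terms are handled by the algebra property of $B^{d/2}$, together with Lemma \ref{lemma:action_of_smooth_function_on_B} to treat compositions such as $\epsilon(\theta)$, $s(\phi,\theta)$, $1/\theta$; legitimacy of $1/\theta$ follows from the smallness of $\vartheta$ in $B^{d/2}\hookrightarrow L^\infty$, which keeps $\theta$ strictly positive.

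Once uniform estimates are in place, I would close the contraction on a suitable small ball, under the smallness assumption \eqref{eq:intro-smallness-condition} and for $T>0$ small enough; existence of a classical solution then follows by passage to the limit, while uniqueness is obtained via an energy-type estimate on the difference of two solutions at the same regularity level (or one order lower, if the nonlinear structure so demands).

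The main obstacle, in my view, is the interaction between the two equations through the non-constant principal coefficient $\epsilon(\theta)$: the chemical potential contains $-\epsilon(\theta)\Delta\phi$, so the leading contribution to $\Delta\mu$ is $-\epsilon(\theta)\Delta^2\phi$. I would handle this perturbatively by writing $\epsilon(\theta) = \epsilon(\bar\theta) + (\epsilon(\theta)-\epsilon(\bar\theta))$, treating the first term as the linear backbone and absorbing the second via Bony paraproduct estimates combined with the smallness of $\vartheta$. A second delicate point is the entropy-production source $\theta\Delta^\ast$ in the $\vartheta$-equation, which contains the quadratic gradient terms $|\nabla\mu+\alpha\nabla\partial_t\phi|^2$ and $\kappa|\nabla\theta|^2/\theta$: bounding these in $L^1(0,T;B^{d/2})$ essentially forces the $\tilde L^2$-in-time regularity for $\partial_t\phi$ and $\partial_t\theta$ appearing in the statement, and the precise algebraic form of \eqref{eq:intro-smallness-condition} is exactly what is required to close these nonlinear estimates while keeping the correct scaling in $\epsilon$, $\alpha$, $\kappa$, $\kappa_B$, and $\bar\theta$.
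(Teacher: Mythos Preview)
Your proposal is correct and follows essentially the same approach as the paper's proof in Section~\ref{sec:well-posedness}: a contraction argument in the Chemin--Lerner framework built on the linear a-priori estimates of Lemmas~\ref{lemma:a-priori-estimates-for-phi} and~\ref{lemma:a-priori-estimates-for-theta}, the algebra property of $B^{d/2}$, and Lemma~\ref{lemma:action_of_smooth_function_on_B} for compositions such as $1/\theta$. The paper's implementation makes explicit the one point you leave implicit: since $\phi_0$ is \emph{not} small, one writes $\phi=\phi_L+\delta\phi$ with $\phi_L$ the solution of the constant-coefficient pseudo-parabolic equation carrying the full initial data, and runs the contraction on $(\delta\phi,\delta\theta)$ in a ball $\mathcal{K}_{\chi,T}$ of small radius $\chi$; the smallness of $T$ then enters precisely through the requirement that certain time-integrated norms of $\phi_L$ be bounded by $\chi^2$ (cf.~\eqref{condition:phi_L-smaller-than-chi^2}).
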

\noindent
The function spaces $L^2(0,T; B^{s})$ with $s\in \mathbb{R}$ is of Chemin-Lerner space-time type (cf.~Section \ref{section:Besov-spaces}), which is slightly larger than the common setting of functions in $L^2(0,T; B^{s})$.
\noindent 
The statement introduces a critical constant temperature $\bar{\theta}>0$, which plays the role of a threshold, where a transition between a two-phase configuration and an homogeneous state occurs. More precisely, our energy density (cf. \eqref{eq: free energy Cahn Hilliard} with \eqref{eq: epsilon function}) switches from a double well potential structure to a single one. 

\begin{remark}\label{rmk:smallness-condition}
   Let us comment on relation  \eqref{eq:intro-smallness-condition}. From a first reading, one might understand that the smallness condition also effects the initial phase field $\phi_0$. Such a relation would be unnatural, since this state variable can vary between the extreme values -1 and 1, therefore $\phi_0$ could not be small in $B^{d/2+2}$ (which is embedded in $L^\infty(\mathbb{R}^d))$. One should instead identify \eqref{eq:intro-smallness-condition} only in terms of $\epsilon, \bar{\theta}$ and $\theta_0$. More precisely, for any phase field $\phi_0 \in B^{\frac{d}{2}+2}$ we can select a sufficiently small $\epsilon>0$ and an initial temperature $\theta_0$, which is sufficiently close to a high constant $\bar{\theta}$, such that \eqref{eq:intro-smallness-condition} is satisfied and our model admits a local-in-time classical solution. 
\end{remark}

\subsection{Plan of the paper}
The remaining part of this paper is organized as follows. Section \ref{sec:modelling} is devoted to the proof of Theorem \ref{thm:intro-theorem1} and we derive here two distinct non-isothermal Cahn-Hilliard models, depending on the assumptions (A1) and (A2) on the temperature respectively. In Section \ref{sec:well-posedness} we prove Theorem \ref{thm:intro-thm2} for the existence and uniqueness of classical solutions for the model of (A2). 
In Section \ref{section:Besov-spaces} we present some basic inequalities and useful tools related to our analysis. Finally, Section \ref{sec:conclusion} is devoted to our conclusions, where we further provide a comparison of our models with respect to others available in literature.
 
\section{A general frame to derive a non-isothermal Cahn-Hilliard}\label{sec:modelling}
This section aims to prove Theorem \ref{thm:intro-theorem1}, in which we extend the Cahn-Hilliard model to a system of PDEs, which take into account the evolution of the temperature, respectively. Any suitable extended model shall be thermodynamically consistent, in the sense that it shall satisfy the conservation of the total energy (first law of thermodynamics) and the increase of the entropy (second law of thermodynamics). Details about these relations will be specified below (cf. Section \ref{sec:derivation-temperature-eq}). 

\subsection{Some thermodynamic relations}\label{sec:thermodynamic-relations}

Before entering the mathematical details, we shall first recall some physical bases around the modelling of the media. For  constant 
temperature, the free energy of Cahn-Hilliard coincides with its internal energy up to a constant. Its formulation is  motivated  by  the  Ginzburg-Landau theory  for phase transitions, being a competition between the elastic and bulk contributions of the media (hydrophobic and hydrophilic effects between different species):
\begin{align*}
\psi(\phi,\nabla \phi):= \frac{\tilde \epsilon}{2}|\nabla \phi|^2+\frac{1}{\tilde \epsilon}W(\phi),
\end{align*}
where it is common to consider $W(\phi)=(\phi^2-1)^2/4$ as a double-well potential, whose minima are attained in proximity of pure phase configurations $\phi =\pm 1$. While the bulk energy reflects the interaction of different volume fractions of individual species, the gradient part determines a regularization (relaxation). The regions of species are separated by a small interface, whose thickness is proportional to the parameter $\tilde \epsilon$.

\noindent Under the assumption of a non-constant temperature, the definition of the free energy density shall change in accordance. In particular, it is rather natural to expect that for high temperatures the mixing term $|\nabla \phi|^2$ should dominate the free energy, whereas for low temperatures the separation term $W(\phi)$ dominates the overall dynamics. Motivated by these remarks, we extend our definition of the free energy $\psi$, which now depends also on the absolute temperature $\theta>0$:
\begin{align}\label{eq: free energy Cahn Hilliard}
\psi(\phi,\nabla \phi,\theta)=\frac{\tilde \epsilon(\theta)}{2}|\nabla \phi|^2+
\frac{1}{\tilde \epsilon(\theta)}\Big( \underbrace{W(\phi)+c(\theta)\phi^2}_{=:W(\phi,\theta)}\Big)-k_B \theta \ln \theta.
\end{align}
\begin{remark}
Some remarks are in order to highlight the effects of the temperature $\theta$ on this free energy. 
\begin{itemize}
    \item The thickness of the diffuse interface varies depending on the temperature, a fact which is grounded in $\tilde \epsilon(\theta)$.
    Here the function $\tilde \epsilon(\theta)$ is a general continuous and increasing function on $\theta$, whose explicit formulation depends on the material.
    
    \item Assuming that the materials turn eventually into a one-phase media at high temperatures, it is plausible to expect a transition of the double-well potential $W(\phi)$ to a rather single one. We therefore recast the double well potential $W(\phi)$ through a function $c(\theta)\phi^2$, which allows the transition from two-phase configurations to a homogeneous state at a critical temperature $\bar{\theta}>0$. 
    
    \item The third contribution $k_B\theta\ln \theta$ in \eqref{eq: free energy Cahn Hilliard} depends only on the (absolute) temperature and on the Boltzmann constant $k_B$. This term is common in thermodynamics and is typical of the dynamics of ideal gas. One shall remark that for isothermal process, it is constant and therefore does not effect the overall dynamics.
\end{itemize}
\end{remark}
\noindent 
We next state a plausible form of the parameters $\tilde \epsilon(\theta)$ and $c(\theta)$ in \eqref{eq: free energy Cahn Hilliard}, which we take into account when mathematically treating our final model in Section \ref{sec:well-posedness}. In particular we consider
\begin{equation}\label{eq: epsilon function}
    \tilde \epsilon(\theta)=\epsilon \theta,\qquad 
    c(\theta)=\frac{1}{3}(\theta-\bar{\theta})^3.
\end{equation}
where $\bar{\theta}>0$ is the temperature of phase transition and the constant  $\epsilon>0$ is specific to the materials. The choices in \eqref{eq: epsilon function} shall be understood as Taylor approximations of more general functions around the constant temperature $\bar{\theta}$.

\smallskip
\noindent
The free energy density being introduced, we next recall certain common thermodynamic quantities, i.e. the chemical potential $\mu$ and the local entropy $s$. These quantities are defined through the relations
\begin{equation}\label{def:chemicalpotential-entropy}
\begin{aligned}
    \mu &= \mu(\phi, \nabla \phi, \theta) := \frac{\delta \psi}{\delta \phi} = 
    \frac{\partial \psi}{\partial \phi} - \dv \frac{\partial  \psi}{\nabla \phi} 
    = \frac{1}{\tilde \epsilon(\theta)}
    \Big( W'(\phi) - 2c(\theta)\phi \Big) - \dv \big(\tilde  \epsilon(\theta) \nabla \phi \big),\\
    s&= 
     s(\phi,\nabla\phi,\theta):=-\frac{\partial \psi}{\partial \theta}  
    = -\frac{\tilde \epsilon'(\theta)}{2}|\nabla \phi|^2+\frac{\tilde \epsilon'(\theta)}{\tilde \epsilon(\theta)^2} W(\phi) -\frac{1}{\tilde \epsilon(\theta)}\frac{\partial W}{\partial\theta}(\phi,\theta)+k_B(1+\ln\theta).
\end{aligned}
\end{equation}
The chemical potential $\mu$ reflects the energy that can be absorbed or released during a phase transition, while the entropy is a quantity representing the irreversibility of the process.

\begin{remark}\label{rmk: convexity of free }
We shall point out that the system in thermodynamical equilibrium could be completely described by the basic thermodynamics variables $(\phi,\,s)$, through the thermal equation of state $\theta = \theta(\phi,s)$ instead of $(\phi, \theta)$. This identity is indeed motivated by the convex structure of the free energy $\psi$ with respect to $\theta$, which would allow to apply the implicit function theorem, in order to determine the value of $\theta(\phi,s)$. However, in this paper, we consider non-equilibrium processes, which are mainly described by $(\phi, \theta)$.
\end{remark}

\subsection{Kinematics of the phase field and transport of the temperature}\label{sec:kinematics}

In continuum mechanics the equations of motion can be interpreted as a balance of conservative (or generalised) forces and dissipative (or frictional) forces. When all active forces are conservative, the Hamilton's principle states that the evolution of a system is characterised by the variation of an action. 
In this section we introduce some assumptions related to the material, which allows to define and take the variation of an action related to the energy density \eqref{eq: free energy Cahn Hilliard}. More precisely, we introduce the kinematics of the phase field $\phi$ and state several hypotheses on how this kinematics could effect the temperature $\theta$ (when dealing with purely conservative forces). These hypotheses are treated separately in the next sections, since they lead to different non-isothermal models.

\smallskip
\noindent
Assume that the media occupies a domain $\Omega\subseteq \mathbb{R}^d$, for $d=2,3$, with or without a boundary $\partial \Omega$. We first recall that that the dynamics given by the Cahn-Hilliard equation preserves the overall volume fraction of the two species. Therefore the phase field function $\phi$ is a conserved quantity in the domain $\Omega\subset \mathbb{R}^n$, a fact that can be formalised through the continuity equation
\begin{align}\label{kinematics}
\partial_t\phi +\dv(\phi\, u)=0~~ (x,t)\in \Omega\times(0,T),\qquad 
\phi_{|t=0}=\phi_0
\qquad u\cdot n=0 ~~(x,t)\in \partial \Omega\times(0,T),
\end{align}
supplemented by an initial data and a no-flux boundary condition. Equation \eqref{kinematics} identifies what is known as the kinematics of the state variable $\phi$. Here $u:\Omega\times (0,t)\to \mathbb{R}^d$ stands for the effective  velocity. For the isothermal Chan-Hillard case, $u$ is commonly expressed in terms of the chemical potential and the phase field $\phi$. We derive a similar result for the non-isothermal case in the forthcoming sections (cf. Theorem \ref{thm:eq-for-phi}), however one should consider $u$ as undetermined at this stage of our modelling. Nevertheless, we assume that $u$ has sufficient regularity to generate a smooth flow map
\begin{equation}\label{flow-map}
        \partial_t x(X,t) = u(x(X,t),t),\quad (X,t) \in \Omega \times (0,T),\qquad 
        x(X,0) = X\quad X\in \Omega,
\end{equation}
where throughout the paper we denote by $x\in \Omega$ the Eulerian coordinates and by $X\in \Omega$ the Lagrangian coordinates. Introducing the deformation tensor $F(X,t) = \nabla_X x(t,X) = (\partial x_i/\partial X_j)_{i,j=1,\dots,d}$, the exact solution of \eqref{kinematics} for $\phi$ is given by $\phi(t,x(X,t)) = \phi_0(X)/\det F(X,t)$.

\smallskip
\noindent 
Motivated by this ansatz, we shall now ask how the flow map $x(X,t)$ interacts with the absolute temperature $\theta$.  We state two main possible situations, from which we eventually derive two distinct systems.
\begin{itemize}
    \item[(A1)] We assume that the temperature moves on the particle scale, i.e. $\theta$ is transported along the trajectory of the flow map with velocity $u$; The temperature is here determined by its initial state through $\theta(x(X,t),t) = \theta_0(X,t)$, which is equivalent to the transport equation $\partial_t \theta + u\cdot \nabla \theta = 0$. By duality the entropy $s$ is here conserved (when all forces are conservative and no frictional one is taken into account) through $\partial_t s + {\rm div}(us) = 0$.
    
    \item[(A2)] We assume that the temperature is on a fixed background, i.e. $\theta$ is independent of the flow map with velocity $u$. This relation is equivalent to the equation $\partial_t \theta = 0$, so that $s$ is conserved  by duality (when all forces are conservative) with $\partial_t s = 0$.
\end{itemize}
\begin{remark}
One could introduce further assumptions on how the flow map interacts with the temperature $\theta$ at the level of the action, for instance by interpolation the conditions (A1) and (A2). We claim therefore that there are infinite non-isothermal extensions of the classical Cahn-Hilliard equation, which are consistent with the main laws of thermodynamics. In this paper we derive just two of them, i.e. the ones related to (A1) and (A2). 
\end{remark}

\subsection{Least action principle}\label{sec:least-action-principle}
The least action principle or principle of virtual work unlocks the first contributions to the force balance equation. The principle reveals the conservative forces $ \textbf{f}_{cons}$ of the motion by a variation of an action through the flow map $x(X,t)$ introduced in Section \ref{sec:kinematics}. These forces will then compete with the dissipative ones to recover the equation of motion. 

\smallskip
\noindent 
For a prescribed initial volume $\Omega_0 \subseteq \Omega$, we denote by $\Omega_t^x =\{y\in \Omega\,:\, y = x(X,t)\; \text{for a}\; X\in \Omega_0\}$ its evolution at a specific time $t\in (0,T)$. We hence define our action $A$ by means of the free energy $\psi$ in \eqref{eq: free energy Cahn Hilliard}:
\begin{align*}
A(x(\cdot))=-\int_0^T\int_{\Omega_t^x}\psi(\phi,\nabla\phi,\theta) dxdt
\end{align*}
(formally, one shall define this action in Lagrangian coordinates, as done in \eqref{action-in-lagrangian}).
The volume force $\textbf{f}_{cons}$ is obtained as the negative variational derivative of $A$ with respect to the flow $x$:
\begin{equation}\label{eq:action-fcons}
\delta A(x(\cdot))=\frac{d}{d \bar\varepsilon}\bigg|_{\varepsilon=0} A(x(\cdot) + \bar\varepsilon \delta x(\cdot)) = 
-\int_0^T\int_{\Omega_t^x} \textbf{f}_{cons}\cdot \delta x dxdt,
\end{equation}
where $\delta x$ is any suitable perturbation of the flow map $x$.

\noindent 
The next theorem determines an explicit form of $\textbf{f}_{cons}$, depending on the given assumptions (A1) or (A2) for the temperature $\theta$. In order to state our result, we shall first recall that $\mu$ and $s$ in \eqref{def:chemicalpotential-entropy} stand for the chemical potential and the local entropy respectively.
\begin{theorem}\label{thm:conservative-forces}
    Assume that the velocity $u$ in  \eqref{kinematics} generates an unique smooth flow map $x(X,t)$.
    \begin{enumerate}[(i)]
        \item If the temperature $\theta$ satisfies the assumption (A1) (i.e. is transported by the flow), then
        \begin{equation*}
            \textbf{f}_{cons} = \phi \nabla \mu  + s  \nabla \theta.
        \end{equation*}
        \item If the temperature $\theta$ satisfies the assumption (A2), then:
        \begin{equation*}
            \textbf{f}_{cons} = \phi \nabla \mu.
        \end{equation*}
    \end{enumerate}
\end{theorem}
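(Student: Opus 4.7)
The strategy is to evaluate $\delta A$ through the Lagrangian representation of the action, in which the domain of integration is the fixed reference set $\Omega_0$ and the perturbation $\bar\varepsilon\,\delta x$ acts only on the flow map $x(X,t)$. Writing $F=\nabla_X x$ and using the expansion $\det F^{\bar\varepsilon}=\det F\,\bigl(1+\bar\varepsilon\,\dv(\delta x)\bigr)+O(\bar\varepsilon^2)$ together with the Lagrangian identity $\phi(x(X,t),t)\det F(X,t)=\phi_0(X)$, which is the exact solution of the kinematic constraint \eqref{kinematics}, one obtains the Eulerian variation
\begin{equation*}
\delta\phi=-\dv(\phi\,\delta x).
\end{equation*}
The analogous bookkeeping for the temperature distinguishes the two hypotheses: under (A1), the identity $\theta(x(X,t),t)=\theta_0(X)$ is unaffected by the perturbation, which translates to $\delta\theta=-\delta x\cdot\nabla\theta$ at a fixed Eulerian point; under (A2), the field $\theta(x,t)$ is prescribed on the background and insensitive to the variation of the flow, so $\delta\theta=0$.

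Differentiating the integrand $\psi(\phi,\nabla\phi,\theta)$ yields
\begin{equation*}
\delta A=-\int_0^T\!\!\int_{\Omega_t^x}\Big[\frac{\partial\psi}{\partial\phi}\delta\phi+\frac{\partial\psi}{\partial\nabla\phi}\cdot\nabla\delta\phi+\frac{\partial\psi}{\partial\theta}\delta\theta\Big]dx\,dt,
\end{equation*}
where $\delta\nabla\phi=\nabla\delta\phi$ since both variations are performed at a fixed Eulerian point. Integrating by parts in $x$ collapses the first two terms into $-\int\!\!\int \mu\,\delta\phi\,dx\,dt$ with $\mu=\partial_\phi\psi-\dv(\partial_{\nabla\phi}\psi)$ exactly as in \eqref{def:chemicalpotential-entropy}. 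A second integration by parts against $\delta\phi=-\dv(\phi\,\delta x)$ turns this piece into $-\int\!\!\int\phi\nabla\mu\cdot\delta x\,dx\,dt$. Using $-\partial_\theta\psi=s$ and the two rules above for $\delta\theta$, the temperature contribution equals $-\int\!\!\int s\nabla\theta\cdot\delta x\,dx\,dt$ under (A1) and vanishes identically under (A2). Comparing the outcome with the defining identity \eqref{eq:action-fcons} yields the two claimed expressions for $\mathbf{f}_{cons}$.

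The only delicate point is the derivation of $\delta\phi=-\dv(\phi\,\delta x)$; I would execute it in Lagrangian coordinates by differentiating $\phi\det F=\phi_0$ at $\bar\varepsilon=0$ and translating back to Eulerian form via the Lie-derivative relation $\delta_L f=\delta f+\delta x\cdot\nabla f$. All boundary contributions from the two integrations by parts, as well as the boundary term incurred when moving from $\Omega_t^{x^{\bar\varepsilon}}$ to $\Omega_t^x$, vanish under the standard admissibility requirement that $\delta x$ be compactly supported in $(0,T)$ and tangent to $\partial\Omega$ (consistent with $u\cdot n=0$). No ingredient beyond the two displayed variations of $\phi$ and $\theta$ is needed to conclude.
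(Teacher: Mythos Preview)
Your proof is correct and follows essentially the same route as the paper: pass to Lagrangian coordinates, differentiate with respect to the flow-map perturbation, convert back to Eulerian form, and integrate by parts to isolate $\delta x$. The only cosmetic difference is that you package the computation through the Eulerian variations $\delta\phi=-\dv(\phi\,\delta x)$ and $\delta\theta$ from the outset, whereas the paper carries the full Jacobian bookkeeping $(\det F,\,F^{-T})$ explicitly before simplifying; the underlying argument and the handling of boundary terms are identical.
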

\begin{remark}
    Before continuing our study with the proof of Theorem \ref{thm:conservative-forces}, some remarks are in order.  
    When assumptions (A2) in $(ii)$ holds, namely when the temperature is not transported by the flow map, the result  is not really surprising, since $\textbf{f}_{cons}$ coincides with the common form $\phi \nabla \mu$ of the isothermal case. In particular, this force tends to vanish within the interfacial layer. 
    On the contrary, when considering the first regime of (A1), we remark an additional contribution 
    $s  \nabla \theta$ of the entropy. This force still effects the mixing region of $\phi = 0$. 
    Because of this property, there is a fundamental advantage when mathematically treating the assumption (A2).
\end{remark}
\begin{proof}
In this proof, we limit ourselves to the case $(i)$. Indeed $(ii)$ can be derived with an analogous argument.
 
\noindent 
In order to compute the variation of the action we first recast the action functional in Lagrangian coordinates. The relation of assumption (A1) tells us that $\theta(x(X,t),t) = \theta_0(X)$, where $\theta_0$ is the initial temperature. Thus the action reads as
\begin{equation}\label{action-in-lagrangian}
A(x(\cdot))=-\int_{t\in [0,T]}\int_{X\in \Omega_0} 
    \psi
    \bigg(
    \underbrace{
        \frac{\phi_0(X)}{\det F(X,t)},F(X,t)^{-T}\nabla_X \bigg(\frac{\phi_0(X)}{\det F(X,t)}\bigg),\theta_0(X)
    }_{=:\mathcal{G}(X,t)}
    \bigg)\det F(X,t)dXdt.
\end{equation}
We consider a variation $x^{\bar\varepsilon}(X,t):=x(X,t)+{\bar\varepsilon}\delta x(X,t)$ of the trajectory $x(X,t)$, where $\delta x(X,t)$ is an arbitrary smooth vector with compact support on $\mathring{\Omega}_0 \times ]0,T[$.
Then 
\begin{align*}
\delta  A(x(\cdot ))
&:=
    \frac{d}{d{\bar\varepsilon}}\bigg|_{\epsilon=0}A(x^{\bar\varepsilon}(\cdot ))
    = -
    \int_{t\in [0,T]}
    \int_{X\in \Omega_0}
    \partial_\phi \psi \big( \mathcal{G}(X,t)\big) 
    \bigg[\frac{-\phi_0}{(\det F)^2}\det F \tr\big(F^{-1}\nabla_X \delta x \big)\bigg] \det F dXdt + \\
    & + 
    \int_{t\in [0,T]}
    \int_{X\in \Omega_0}
    \partial_{\nabla \phi}\psi \big(\mathcal{G}(X,t)\big)
    \cdot 
    \Bigg\{
        F^{-T}\nabla_X \delta x F^{-T} \nabla_X\bigg(\frac{\phi_0}{\det F}\bigg)
        + \\ 
        &\hspace{2cm} - 
        F^{-T}\nabla_X\bigg(\frac{\phi_0}{(\det F)^2}\det F \tr\big(F^{-1}\nabla_X \delta x\big)\bigg) 
    \bigg\} \det F dXdt + \\
    &- \int_{t\in [0,T]}
    \int_{X\in \Omega_0}
    \psi\big( \mathcal{G}(X,t)\big) 
         \tr\big(F^{-T}\nabla_X \delta x\big)
         \det F dXdt.
\end{align*}
At this stage, we recast the overall integral in Eulerian coordinates:
\begin{align*}
\delta  A(x(\cdot ))
&= 
\int_{t\in [0,T]}
\int_{X\in \Omega_0}
\bigg\{
    \partial_\phi \psi\big(\phi,\nabla_x \phi ,\theta\big)\phi\, {\rm div_x}\,\delta x +
    \partial_{\nabla \phi}\psi 
    \big(\phi,\nabla_x \phi ,\theta\big)\otimes \nabla_x\phi :  \nabla_x \delta x +
    \\
    &~~~-\partial_{\nabla \phi}\psi \cdot \big(\phi,\nabla_x \phi ,\theta\big)\nabla_x(\phi \, {\rm div}_x\,\delta x) -  \psi\big(\phi,\nabla_x \phi ,\theta\big){\rm div}_x\,\delta x 
\bigg\} dxdt.
\end{align*}
We aim now to isolate the variation $\delta x$, making use of an integration by parts. Therefore
\begin{align*}
\delta  A(x(\cdot ))
&=-
\int_{t\in [0,T]}
\int_{X\in \Omega_0}
\bigg\{ 
    \nabla_x\big(\phi \partial_\phi \psi \big)+{\rm div}\,\big(\partial_\nabla \phi \otimes \nabla \phi\big) -
    \nabla_x\big(\phi\, {\rm div}\,\partial_{\nabla \phi} \psi\big) - \nabla_x \psi \bigg\} \cdot\delta x dxdt,
\end{align*}
where the boundary terms on $\partial \Omega^x_t$ vanishes because $\delta x$ is supported in $\mathring{\Omega}_0\times ]0,T[$. Let us comment also that a further term $\partial_\theta \psi \nabla \theta$ would appear in the last integral, when assumptions (A2) is taken into account.
It remains to develop the function in front of $\delta x$:
\begin{align*}
&\nabla\big(\phi\partial_\phi \psi\big)+\nabla\cdot\big(\partial_{\nabla \phi} \psi \otimes \nabla \phi\big)-\nabla\big(\phi\nabla \cdot\partial_{\nabla \phi} \psi \big)-\nabla \psi = \nabla \phi\partial_\phi \psi+\phi\nabla \partial_\phi \psi+\nabla\cdot\partial_{\nabla \phi} \psi \nabla \phi+\partial_{\nabla \phi} \psi \nabla^2\phi\\
&-\nabla\cdot\partial_{\nabla \phi} \psi \nabla \phi -\phi\nabla \nabla \cdot\partial_{\nabla \phi} \psi -\partial_\phi \psi\nabla \phi-\partial_{\nabla \phi} \psi \nabla^2\phi -\partial_\theta \psi \nabla \theta = \phi\nabla \big(\partial_\phi \psi-\nabla\cdot\partial_{\nabla \phi} \psi \big)-\partial_\theta \psi \nabla \theta,
\end{align*}
where the last term coincides with $\phi \nabla \mu + s\nabla \theta$. We conclude the proof hence by recalling identity \eqref{eq:action-fcons}.
\end{proof}

\subsection{Onsager reciprocal relation}\label{sec:onsager}

In the present section we establish the equation of motion for the phase field $\phi$. When both dissipative forces $\textbf{f}_{diss}$ and conservative forces $\textbf{f}_{cons}$ are derived, the equation of motion is revealed by the force balance (Newton’s Second Law, without inertial forces): $\textbf{f}_{diss} + \textbf{f}_{cons} = 0$. To derive the dissipative force $\textbf{f}_{diss}$, we invoke the Maximum Dissipation Principle, which implies that this force may be obtained by the variation of the dissipation functional $\mathcal{D}$ with respect to the velocity field $u$. 
In order to derive a Cahn-Hilliard-type equation, we shall consider a dissipation rate of Darcy type, namely of the form
\begin{equation}\label{eq:Darcy-dissipation}
    \mathcal{D} = \int_{\Omega} \eta(\phi,\theta) |u|^2 + \alpha |\partial_t\phi |^2.
\end{equation}
Here $\eta$ stands for a friction parameter, which is positive and commonly depends on the state variables $(\phi, \theta)$ (and their derivatives). In Cahn-Hilliard the friction usually satisfies $\eta = \gamma \phi^2$ for some positive constant $\gamma>0$. We introduce a further artificial dissipation $\alpha|\partial_t\phi|^2$ which will come out useful when mathematically treating our final model.

\smallskip\noindent 
We mention that, 
in different contexts than Cahn-Hilliard,
other types of dissipation could be taken into account here. For instance a quadratic dissipation in $\nabla u$ of Stokes type. In this paper we address uniquely the case given by the Darcy law, that leads indeed to Cahn-Hilliard.

\smallskip\noindent 
At this stage, we can invoke the Maximum Dissipation Principle to derive the dissipative force (linear with respect to the rate $u$). Recalling that $\partial_t \phi + \dv(u\phi ) = 0$, we gather that
\begin{align*}
    \int_\Omega \textbf{f}_{diss} \cdot\delta u \, dx :=
     \frac{1}{2}\frac{\delta \mathcal{D}}{\delta u} 
     &=\frac{d}{d{\bar\varepsilon}}\bigg|_{{\bar\varepsilon}=0}\int_\Omega \eta(\phi,\theta)|u+{\bar\varepsilon}\delta u |^2+\alpha|\dv(\phi u+{\bar\varepsilon}\phi \delta u )|^2 dx\\
    &=\int_\Omega \eta(\phi,\theta)u\cdot\delta u  +\alpha\dv(\phi u)\dv(\phi \delta u ) dx\\
    &=\int_\Omega \big(\eta(\phi,\theta)u -\alpha\phi\nabla \dv(\phi u)\big)\cdot\delta u dx\\
    &=\int_\Omega \big(\eta(\phi,\theta)u +\alpha\phi\nabla \partial_t \phi \big)\cdot\delta u dx.
\end{align*}
The dissipative force being revealed, we are now in the condition of stating the equation for $\phi$. This depends naturally on the hypotheses (A1) or (A2) of the temperature $\theta$, that we have introduced in Section \ref{sec:kinematics}.
\begin{theorem}\label{thm:eq-for-phi}
    Assume that the hypotheses of Theorem \ref{thm:conservative-forces} are satisfied and the friction coefficient fulfills $\eta(\phi,\theta) = \phi^2$. 
    \begin{enumerate}[(i)]
        \item If the temperature $\theta$ satisfies the assumption (A1), then the equation of motion for $\phi$ is given by
        \begin{equation*}
            \begin{cases}
                \partial_t \phi + \dv(u\phi) = 0,\\
                \phi \nabla \mu + s\nabla\theta  + \phi^2 u + \alpha \phi \nabla \partial_t \phi = 0,
            \end{cases}
            \quad \text{that is}\quad 
            \partial_t \phi - \alpha \Delta \partial_t \phi = \Delta \mu + \dv\Big( \frac{s\nabla \theta}{\phi} \Big).
        \end{equation*}
    \item If the temperature $\theta$ satisfies the assumption (A2), then the equation of motion for $\phi$ is given by
        \begin{equation*}
            \begin{cases}
                \partial_t \phi + \dv(u\phi) = 0,\\
                 \nabla \mu  + \phi u + \alpha \nabla \partial_t \phi = 0,
            \end{cases}
            \quad \text{that is}\quad 
            \partial_t \phi - \alpha \Delta \partial_t \phi = \Delta \mu.
        \end{equation*}
    \end{enumerate}
\end{theorem}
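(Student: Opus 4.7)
The strategy is to combine the conservative force $\mathbf{f}_{cons}$ produced by Theorem \ref{thm:conservative-forces} with the dissipative force $\mathbf{f}_{diss}$ just obtained from the Maximum Dissipation Principle, impose the force balance $\mathbf{f}_{diss}+\mathbf{f}_{cons}=0$ (Newton's second law without inertia), specialise to the Darcy friction $\eta(\phi,\theta)=\phi^2$, and then eliminate $u$ by inserting the resulting algebraic relation into the continuity equation \eqref{kinematics}. Concretely, the computation that led to $\mathbf{f}_{diss}=\eta(\phi,\theta)u+\alpha\phi\nabla\partial_t\phi$ shows that, with $\eta=\phi^2$, the force balance reads $\phi^2 u+\alpha\phi\nabla\partial_t\phi+\mathbf{f}_{cons}=0$.

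In case (i), Theorem \ref{thm:conservative-forces} gives $\mathbf{f}_{cons}=\phi\nabla\mu+s\nabla\theta$, so the balance becomes the second equation of the stated system. Formally dividing by $\phi$ (this is where the singularity on the interface $\{\phi=0\}$ enters) yields
\begin{equation*}
\phi u=-\nabla\mu-\frac{s\nabla\theta}{\phi}-\alpha\nabla\partial_t\phi.
\end{equation*}
Taking the divergence and substituting $\dv(\phi u)=-\partial_t\phi$ from \eqref{kinematics} produces exactly
$\partial_t\phi-\alpha\Delta\partial_t\phi=\Delta\mu+\dv(s\nabla\theta/\phi)$. Case (ii) is handled identically but with the simpler conservative force $\mathbf{f}_{cons}=\phi\nabla\mu$ of Theorem \ref{thm:conservative-forces}: after dividing the balance by $\phi$ one gets $\phi u=-\nabla\mu-\alpha\nabla\partial_t\phi$, and the same divergence argument delivers $\partial_t\phi-\alpha\Delta\partial_t\phi=\Delta\mu$.

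The only real subtlety is the division by $\phi$ used to arrive at the scalar PDE. In case (ii) the factor of $\phi$ is common to every term in the vector balance, so the operation is purely algebraic and produces no singularity in the final scalar equation, consistent with the classical Cahn--Hilliard structure. In case (i), however, the entropic contribution $s\nabla\theta$ does not carry a factor $\phi$, and the resulting $\dv(s\nabla\theta/\phi)$ is formally singular across the interface; this is precisely the obstruction flagged in the text for the analytical treatment of (A1) and does not affect the formal derivation carried out here. Thus the theorem follows directly by substituting the results of Theorem \ref{thm:conservative-forces} and the dissipative force computation into the force balance and the continuity equation.
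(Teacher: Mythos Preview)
Your proposal is correct and follows exactly the approach the paper takes: the paper computes $\mathbf{f}_{diss}=\eta(\phi,\theta)u+\alpha\phi\nabla\partial_t\phi$ immediately before the statement, and the theorem is then obtained by imposing the force balance with the conservative forces of Theorem~\ref{thm:conservative-forces}, dividing by $\phi$, and substituting into the continuity equation. Your remark on the singularity in case~(i) is also consistent with the paper's later Remark~\ref{rmk:problem-with-A1}.
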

\subsection{The equation of the temperature}\label{sec:derivation-temperature-eq}
The conservation of the total energy (first law of thermodynamics) and the Clausius-Duhem inequality (second law of thermodynamics) are the key ingredients to determine the equation of the absolute temperature $\theta$. The Clausius-Duhem inequality reflects the irreversibly of the process, which is grounded in the increasing of the total entropy through the relation
\begin{equation}\label{eq:clausius-duhem}
\begin{aligned}
  &\text{(A1)} \qquad  \theta ( \partial_t s + \dv(us) )  = \theta \dv \textbf{j} + \theta \Delta^*, \\
  &\text{(A2)} \qquad  \theta \partial_t s = \theta \dv \textbf{j} + \theta \Delta^*,
\end{aligned}
\end{equation}
for a suitable entropy flux $\textbf{j}$ and entropy production $\theta \Delta^*\geq 0$. 
The action of $u$ on the rate of the entropy in (A1) is motivated by the duality of $s$ with respect to $\theta$ (cf. Section \ref{sec:kinematics}). Because of our choice on the state variable, we recall that $s = -\partial \psi/\partial \theta$ must be intended as function of $(\phi,\nabla \phi,\theta)$

\smallskip\noindent 
Since $u$ has been revealed in the previous sections, it remains to determine the entropy flux $\textbf{j}$ and the entropy production $\theta \Delta^*$. While the former is encompassed in this article by the Fourier's law, the physics  of the latter is derived through the first law of thermodynamics, namely the conservation of the total energy. 

\smallskip\noindent 
The Fourier's law is a thermodynamic relation, which shows how the local heat flux density 
$\mathbf{q}$ is equal to the product of thermal conductivity $\kappa>0$ and the local temperature gradient:
\begin{equation}
  \textbf{j} =
    -
    \frac{\textbf{q}}{\theta},\qquad \text{with}\quad  \textbf{q} = -\kappa \nabla \theta,
\end{equation}
We mention that different type of heat can be taken into account, as done for instance in \cite{Marveggio21}, where the authors consider an heat flux of the form $\mathbf{q} = k \nabla (1/\theta)$.

\smallskip
\noindent
The entropy production in \eqref{eq:clausius-duhem} is revealed by the first law of thermodynamics, which states that the total energy of the system must be conserved in time. For the Cahn-Hilliard equation, the total energy coincides with the internal energy, whose density $e = e(\phi, \nabla \phi , \theta)$ differs by the free energy $\psi(\phi, \nabla \phi, \theta)$ in \eqref{eq: free energy Cahn Hilliard} by means of an additional entropic contribution:
\begin{equation}\label{eq: general internal energy}
    e(\phi,\nabla\phi,\theta):=\psi(\phi,\nabla\phi,\theta)+\theta s(\phi,\nabla\phi,\theta)= \psi(\phi,\nabla\phi,\theta)-\theta\, \partial_\theta \psi(\phi,\nabla\phi,\theta).
\end{equation}
The total energy $E^{\rm tot}(t) := \int_{\Omega} e(\phi(t,x), \nabla \phi(t,x), \theta(t,x))dx$ is then conserved whenever 
\begin{equation*}
    \frac{d}{dt }E^{\rm tot}(t) = 0.
\end{equation*}
The next result analyse this last relation and reveals the form of the entropy production $\theta \Delta^*$, in accordance to the assumptions (A1) or (A2) for the temperature $\theta$.
\begin{theorem}\label{thm:entropy-production}
    The following relations on the entropy production holds true:
    \begin{enumerate}[(i)]
        \item If the temperature satisfies (A1) and the entropy production fulfills
        \begin{equation}\label{eq:entropyproductionA1}
            \theta \Delta^* = 
                \Big| \nabla \mu + \alpha \nabla \partial_t \phi + \frac{s\nabla \theta}{\phi} \Big|^2+ \alpha |\partial_t \phi |^2 + \kappa\frac{ |\nabla \theta|^2}{\theta},
        \end{equation}
        then the total energy of the system is conserved;
        \item if the temperature satisfies (A2)  and the entropy production fulfills
        \begin{equation}\label{eq:entropyproductionA2}
            \theta \Delta^* = 
                \Big| \nabla \mu + \alpha \nabla \partial_t \phi  \Big|^2+ \alpha |\partial_t \phi |^2 + \kappa\frac{ |\nabla \theta|^2}{\theta},
        \end{equation}
        then the total energy of the system is conserved.
    \end{enumerate}
\end{theorem}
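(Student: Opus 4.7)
The plan is to compute $\frac{d}{dt}E^{\mathrm{tot}}(t) = \int_\Omega \partial_t e\,dx$ directly and verify it vanishes once the prescribed entropy productions are imposed. First I would apply the chain rule to $e = \psi + \theta s$; using the identity $s = -\partial_\theta \psi$ from \eqref{def:chemicalpotential-entropy}, the two $\partial_t\theta$ contributions cancel, leaving
\begin{equation*}
\partial_t e \;=\; \partial_\phi\psi\,\partial_t\phi + \partial_{\nabla\phi}\psi\cdot\partial_t\nabla\phi + \theta\,\partial_t s.
\end{equation*}
Integrating by parts in $x$ (using the no-flux or decay-at-infinity conditions) collapses the first two terms into $\mu\,\partial_t\phi$ by the very definition \eqref{def:chemicalpotential-entropy} of the chemical potential, so the task reduces to showing that $\int_\Omega (\mu\,\partial_t\phi + \theta\,\partial_t s)\,dx=0$.

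Next I would substitute the Clausius--Duhem relation \eqref{eq:clausius-duhem} together with the Fourier law $\textbf{j}=-\mathbf{q}/\theta$, $\mathbf{q}=-\kappa\nabla\theta$, and expand $\theta\,\dv\textbf{j} = -\dv\mathbf{q} - \kappa|\nabla\theta|^2/\theta$. The pure divergence $\dv\mathbf{q}$ integrates to zero, so in case (A2) energy conservation is equivalent to
\begin{equation*}
\int_\Omega \mu\,\partial_t\phi\,dx \;=\; -\int_\Omega \bigl(|\nabla\mu + \alpha\nabla\partial_t\phi|^2 + \alpha|\partial_t\phi|^2\bigr)\,dx,
\end{equation*}
which accounts for the remaining terms of \eqref{eq:entropyproductionA2} after the $\kappa|\nabla\theta|^2/\theta$ contribution cancels with the one produced by the Fourier step. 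To verify this identity, I would rewrite $\int\mu\,\partial_t\phi\,dx = \int \nabla\mu\cdot\phi u\,dx$ using the continuity equation $\partial_t\phi + \dv(\phi u)=0$, then substitute $\phi u = -\nabla\mu - \alpha\nabla\partial_t\phi$ from Theorem \ref{thm:eq-for-phi}(ii); a further integration by parts converts the cross-term $-\alpha\int \nabla\partial_t\phi\cdot\phi u\,dx$ into $-\alpha\int|\partial_t\phi|^2\,dx$ via $\dv(\phi u)=-\partial_t\phi$, closing (A2).

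Case (A1) follows the same skeleton but picks up two extra contributions that must be reconciled: the transport term $\theta\,\dv(us)$ on the left of \eqref{eq:clausius-duhem}, and the additional summand $s\nabla\theta/\phi$ in $\phi u = -\nabla\mu - \alpha\nabla\partial_t\phi - s\nabla\theta/\phi$ from Theorem \ref{thm:eq-for-phi}(i). I would integrate $\int \theta\,\dv(us)\,dx$ by parts to obtain $-\int \nabla\theta\cdot u\,s\,dx = -\int (s\nabla\theta/\phi)\cdot\phi u\,dx$, so that this combines with $\int \nabla\mu\cdot\phi u\,dx$ into $\int(\nabla\mu + s\nabla\theta/\phi)\cdot\phi u\,dx$, and from here one proceeds exactly as in (A2) with $\nabla\mu$ replaced throughout by $\nabla\mu + s\nabla\theta/\phi$. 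The main obstacle, and the step I expect to require the most care, is the bookkeeping of the cross-terms between the $\alpha$-dissipation and the entropic force $s\nabla\theta/\phi$: completing the square to recover $|\nabla\mu + \alpha\nabla\partial_t\phi + s\nabla\theta/\phi|^2$ as in \eqref{eq:entropyproductionA1} hinges on the final integration by parts $-\alpha\int \phi u\cdot\nabla\partial_t\phi\,dx = -\alpha\int |\partial_t\phi|^2\,dx$, after which every summand in the prescribed $\theta\Delta^*$ is accounted for and $\frac{d}{dt}E^{\mathrm{tot}}=0$ follows.
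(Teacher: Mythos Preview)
Your proposal is correct and follows essentially the same route as the paper: both compute $\partial_t e$ via the chain rule, use $s=-\partial_\theta\psi$ to cancel the $\partial_t\theta$ contributions, substitute the Clausius--Duhem relation and the force balance from Theorem~\ref{thm:eq-for-phi}, and recognize $\phi^2|u|^2=|\nabla\mu+\alpha\nabla\partial_t\phi\,(+s\nabla\theta/\phi)|^2$ after the integration by parts $-\alpha\int\phi u\cdot\nabla\partial_t\phi=-\alpha\int|\partial_t\phi|^2$. The only cosmetic difference is that the paper carries out the computation pointwise (tracking explicit divergence terms in $\partial_t e$) before integrating, whereas you pass to integrals earlier; the substance is identical.
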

\begin{proof}
We first establish the rate of the total energy density. For the sake of a compact presentation, we abbreviate the dependence of any function $f$ on the state variables by $f(\phi, \theta)$. For example, by $e(\phi, \theta)$ we intend $e(\phi,\nabla \phi, \theta)$.

\noindent
Recalling that the state variables $(\phi, \theta)$ depends on $(t,x)\in (0,T)\times \Omega$ and that $e$ satisfies \eqref{eq: general internal energy}, we gather that
\begin{align*}
    \partial_t [e(\phi, \theta)]
    &= \partial_t [\psi(\phi, \theta)] + \partial_t[\theta\, s(\phi, \theta)] \\
    & = \partial_\phi \psi(\phi, \theta)\, \partial_t \phi + \partial_{\nabla \phi}\psi(\phi,\theta) \cdot \partial_t \nabla \phi + \partial_\theta \psi(\phi, \theta) \,\partial_t \theta + (\partial_t \theta)s(\phi, \theta) + \theta\,\partial_t [s(\phi, \theta)].
\end{align*}
Because of the thermodynamic definition of the entropy $s(\phi, \theta) = - \partial_\theta \psi(\phi, \theta)$, the third and fourth components cancel out. 
In particular, recalling that $\partial_t \phi + \dv(u\,\phi) = 0$, we infer that
\begin{equation*}
\begin{aligned}
    \partial_t [e(\phi, \theta)]
    &= \Big[ \partial_\phi\psi(\phi,\theta) - \dv \big( \partial_{\nabla \phi} \psi(\phi,\theta) \big)\Big]\partial_t \phi  + 
    \dv \big(\partial_{\nabla \phi} \psi(\phi,\theta) \partial_t \phi\big)  + \theta \partial_t [s(\phi,\theta)]\\
     &= -\Big[ \partial_\phi\psi(\phi,\theta) - \dv \big( \partial_{\nabla \phi} \psi(\phi,\theta) \big)\Big]\dv(u\phi)  + 
    \dv \big(\partial_{\nabla \phi} \psi(\phi,\theta) \partial_t \phi\big)  + \theta \partial_t [s(\phi,\theta)].
\end{aligned}
\end{equation*}
We then invoke the definition of the chemical potential $\mu = \mu(\phi,\theta) = \partial_\phi \psi(\phi,\theta) - \dv \big(\partial_{\nabla \phi}\psi(\phi,\theta)\big)$, therefore
\begin{equation}\label{eq:biforcation}
    \partial_t [e(\phi, \theta)]=- \mu \dv(u\phi)  + 
    \dv \big(\partial_{\nabla \phi} \psi(\phi,\theta) \partial_t \phi\big)  + \theta \partial_t [s(\phi,\theta)].
\end{equation}
We further develop the last relation, by separately addressing the assumption (A1) and (A2) in the Clausius-Duhem inequality \eqref{eq:clausius-duhem}.
If (A1) holds, then \eqref{eq:biforcation} reduces to
\begin{align*}
    \partial_t [e(\phi, \theta)]&=  
    - \dv\big\{u\phi \mu - \partial_{\nabla \phi}\psi(\phi,\theta)\partial_t \phi \big\}  + \nabla\mu \cdot u\phi  - \theta \dv \big(u\,s(\phi,\theta)\big) - \dv \mathbf{q} + \frac{\mathbf{q}\cdot \nabla \theta}{\theta} + \theta \Delta^*\\
    &=  
    - \dv\big\{u\phi \mu  + \theta s u - \partial_{\nabla \phi}\psi(\phi,\theta)\partial_t \phi \big\}  + (\nabla\mu + s(\phi,\theta)\nabla \theta) \cdot u  - \dv \mathbf{q} + \frac{\mathbf{q}\cdot \nabla \theta}{\theta} + \theta \Delta^*.
\end{align*}
Next, we recall that thanks to Theorem \ref{thm:eq-for-phi} the relation $\nabla \mu + s\nabla \theta = -\phi^2 u - \alpha\phi \nabla \partial_t \phi$. Furthermore, with the Fourier's law $\mathbf{q} =- k\nabla \theta $, we get that
\begin{align*}
    \partial_t &[e(\phi, \theta)]
    =  
    - \dv\big\{u\phi \mu  + \theta s u - \partial_{\nabla \phi}\psi(\phi,\theta)\partial_t \phi \big\}  -\phi^2|u|^2 - \alpha \phi u\cdot  \nabla \partial_t \phi  - \dv \mathbf{q} - \kappa\frac{|\nabla \theta|^2}{\theta} + \theta \Delta^*\\
     &=  
    - \dv\big\{u\phi \mu  + \theta s u - \partial_{\nabla \phi}\psi(\phi,\theta)\partial_t \phi 
    - \alpha \phi u \partial_t \phi \big\}  -\phi^2|u|^2 + \alpha \dv(\phi u) \partial_t \phi  - \dv \mathbf{q} - \kappa\frac{|\nabla \theta|^2}{\theta}  + \theta \Delta^*\\
     &=  
    - \dv\big\{u\phi \mu  + \theta s u - \partial_{\nabla \phi}\psi(\phi,\theta)\partial_t \phi 
    - \alpha \phi u \partial_t \phi \big\}  -\phi^2|u|^2 -\alpha |\partial_t \phi|^2  - \dv \mathbf{q} - \kappa\frac{|\nabla \theta|^2}{\theta}  + \theta \Delta^*
\end{align*}
Integrating in space along $\Omega$ and remarking that the boundary contributions vanish (since $u = 0$ in $\partial \Omega$), we eventually gather that
\begin{equation*}
    \frac{d}{dt}E^{\rm tot}(t) = \int_{\Omega}\Big(\theta \Delta^* - \phi^2|u|^2 -\alpha|\partial_t\phi|^2 -\kappa\frac{|\nabla \theta|^2}{\theta} \Big) dx = 0,
\end{equation*}
as long as the identity \eqref{eq:entropyproductionA1} is satisfied. This concludes the proof Theorem \ref{thm:entropy-production}, part $(i)$.

\smallskip\noindent 
The argument to prove $(ii)$ is analogous. Recalling \eqref{eq:biforcation} and the assumption (A2) in \eqref{eq:clausius-duhem}, we infer that
\begin{equation}\label{eq:derivation-of-energy-eq}
\begin{aligned}
    \partial_t [e(\phi, \theta)]&=  
    - \dv\big\{u\phi \mu - \partial_{\nabla \phi}\psi(\phi,\theta)\partial_t \phi \big\}  + \nabla\mu \cdot u\phi - \dv \mathbf{q} - \frac{\mathbf{q}\cdot \nabla \theta}{\theta} + \theta \Delta^*\\
    &=  
    - \dv\big\{u\phi \mu - \partial_{\nabla \phi}\psi(\phi,\theta)\partial_t \phi \big\}   -\phi^2|u|^2 -\alpha\phi\nabla\partial_t \phi - \dv \mathbf{q} -\kappa \frac{|\nabla \theta|^2}{\theta} + \theta \Delta^*\\
    &=  
    - \dv\big\{u\phi \mu - \partial_{\nabla \phi}\psi(\phi,\theta)\partial_t \phi -\alpha \phi u\partial_t \phi \big\}   -\phi^2|u|^2 -\alpha|\partial_t \phi|^2 - \dv \mathbf{q}-\kappa \frac{|\nabla \theta|^2}{\theta}  + \theta \Delta^*.
\end{aligned}
\end{equation}
The relation \eqref{eq:entropyproductionA2} on the entropy production ensures then that the total energy is conserved. This concludes the proof of the Theorem.
 
\subsection{Summary of the overall systems} 
In this section we summarize the equations for the non-isothermal Cahn-Hilliard, obtained in Theorem \ref{thm:conservative-forces}, Theorem \ref{thm:eq-for-phi} and Theorem \ref{thm:entropy-production}. As depicted in Section \ref{sec:kinematics}, the structure of the model is entangled with the assumptions (A1) or (A2) on $\theta$, i.e. whether $\theta$ is related to the effective velocity $u$ or not. Therefore we must distinguish these two cases.

\smallskip\noindent 
For the sake of a compact and clear presentation, we maintain in the equations the notation of the velocity $u$, the chemical potential $\mu$ and the local entropy $s$. The reader shall remark that these are functions on $(\phi,\theta)$, therefore the final systems can be recast just in terms of these state variables (cf. for instance \eqref{eq:phi-to-analyse}--\eqref{eq:theta-to-analyse}). Furthermore, for simplicity, we recall here our assumption on the energy density and therefore on the corresponding local entropy and chemical potential:
\begin{equation*}
\begin{alignedat}{16}
    &W(\phi,\theta) &&= \frac{(\phi^2-1)^2}{4} + c(\theta)\phi^2,\qquad 
    &&&&\psi(\phi,\theta)
    &&&&&&&&= 
    \frac{\tilde \epsilon(\theta)}{2}|\nabla \phi|^2 + 
    \frac{1}{\tilde \epsilon(\theta)}W(\phi,\theta) - k_B \theta \ln \theta,\\
    &\mu(\phi,\theta)&&= -\tilde \epsilon(\theta)\Delta \phi + \frac{1}{\tilde \epsilon(\theta)}\partial_\phi W(\phi, \theta),\; 
    &&&&s(\phi,\theta) 
    &&&&&&&&=  
    -\frac{\tilde \epsilon'(\theta)}{2}|\nabla \phi|^2+\frac{\tilde \epsilon'(\theta)}{\tilde \epsilon(\theta)^2} W(\phi,\theta)-\!\frac{\partial_\theta W(\phi,\theta)}{\tilde \epsilon(\theta)} + k_B(1+\ln \theta).
\end{alignedat}    
\end{equation*}
The equations are defined on the domain $(x,t)\in \Omega\times (0,T)$ and they shall be supplemented by appropriate boundary conditions on  $\partial \Omega\times (0,T)$. In the forthcoming sections we address the analysis of an unbounded domain $\Omega = \mathbb{R}^d$.

\smallskip\noindent 
Let us assume that (A1) of Section \ref{sec:kinematics} holds true. Then the following system on the state variables $(\phi(x,t),\theta(x,t))$ satisfies both the first and second laws of thermodynamics:
\begin{itemize}
    \item 
the \textbf{continuity equation} (equation for $\phi$)
 \begin{equation}\label{eq:final-sec-first-system-continuity-eq}
    \partial_t \phi+\nabla\cdot(\phi \, u(\phi,\theta))=0,
 \end{equation}
 \item the \textbf{force balance equation}
    \begin{equation}\label{eq:final-sec-first-system-balance-eq}
        \phi^2 u(\phi,\theta) := - \phi \nabla (\mu(\phi,\theta)) - s(\phi,\theta) \nabla \theta  -\alpha\phi\nabla \partial_t \phi,
    \end{equation}
    \item the \textbf{Clausius-Duhem inequality} (equation for $\theta$) 
    \begin{equation}
        \theta 
        \partial_t [s(\phi,\theta)]+\dv \big[s(\phi,\theta)u(\phi,\theta)\big]=\theta \nabla\cdot\bigg(\frac{\kappa\nabla \theta}{\theta}\bigg)+\theta \Delta^*(\phi,\theta),
    \end{equation}
    \item the \textbf{entropy production rate}
    \begin{equation}
        \theta \Delta^*(\phi,\theta):= \phi^2|u(\phi,\theta)|^2+\alpha |\partial_t\phi|^2+\kappa\frac{|\nabla \theta|^2}{\theta},
    \end{equation}
    \item the \textbf{boundary condition}
    \begin{equation}
        \nabla \phi\cdot n=0,~~\nabla \theta\cdot n=0,~~ u(\phi,\theta)\cdot n=0,
    \end{equation}
    where $n(x,t)$ stands for the normal to the boundary on $(x,t)\in \partial \Omega \times (0,T)$,
    \item the \textbf{initial data}
        \begin{align}
        \phi(0,x)=\phi_0(x),~~\theta(0,x)=\theta_0(x)~~x\in \Omega.
    \end{align}
\end{itemize}
\begin{remark}\label{rmk:problem-with-A1}
Before stating the second thermodynamically consistent model, some remarks are here in order. From the force balance equation \eqref{eq:final-sec-first-system-balance-eq}, we can evince that the term $\phi\,u(\phi,\theta)$ (which is also in \eqref{eq:final-sec-first-system-continuity-eq}) could not be well defined in the interface region, because of the entropy term $s\nabla \theta/\phi$. An open question is whether or not there are initial data (such as ones with constant temperature along the mixing region), for which this system admits solutions and therefore it has a meaning, at least locally in time. 

\noindent
This problem does not occur when dealing with the second system of (A2). Although the assumption (A1) would seem inherently reasonable, i.e. to assume that the temperature is transported by the flow $u$, the mathematical treatment of our models would favors the second case (A2).
\end{remark}

\smallskip\noindent 
Let us assume that assumption (A2) holds true.  Then the following system on the state variables $(\phi(x,t),\theta(x,t))$ satisfies both the first and second laws of thermodynamics:
\begin{itemize}
    \item 
the \textbf{continuity equation} (equation for $\phi$)
 \begin{equation}\label{eq:second-system-cont-eq-final-sec}
    \partial_t \phi+\nabla\cdot(\phi \, u(\phi,\theta))=0,
 \end{equation}
 \item the \textbf{force balance equation}
    \begin{equation}
        \phi\, u(\phi,\theta) := -  \nabla (\mu(\phi,\theta))  -\alpha \nabla \partial_t \phi,
    \end{equation}
    \item the \textbf{Clausius-Duhem inequality} (equation for $\theta$) 
    \begin{equation}
        \theta 
        \partial_t [s(\phi,\theta)]=\theta \nabla\cdot\bigg(\frac{\kappa\nabla \theta}{\theta}\bigg)+\theta \Delta^*(\phi,\theta),
    \end{equation}
    \item the \textbf{entropy production rate}
    \begin{equation}\label{eq:second-system-entropy-final-sec}
        \theta \Delta^*(\phi,\theta):= \Big| \nabla (\mu(\phi,\theta)) + \alpha \nabla \partial_t \phi \Big|^2+\alpha |\partial_t\phi|^2+\kappa\frac{|\nabla \theta|^2}{\theta},
    \end{equation}
    \item the \textbf{boundary condition}
    \begin{equation}
        \nabla \phi\cdot n=0,~~\nabla \theta\cdot n=0,~~ u(\phi,\theta)\cdot n=0,
    \end{equation}
    where $n(x,t)$ stands for the normal to the boundary on $(x,t)\in \partial \Omega \times (0,T)$,
    \item the \textbf{initial data}
        \begin{equation}\label{eq:second-system-initial-data-final-sec}
            \phi(0,x)=\phi_0(x),~~\theta(0,x)=\theta_0(x)~~x\in \Omega.
        \end{equation}
\end{itemize}
The physics of the model being revealed, in the next sections we address the well-posedness of system \eqref{eq:second-system-cont-eq-final-sec}--\eqref{eq:second-system-initial-data-final-sec}. To this end, we shall first recast the entire equations just in terms of $\phi$ and $\theta$. Furthermore, the interface parameter $\tilde \epsilon(\theta)$ and the potential parameter $c(\theta)$ are here considered as
\begin{equation*}
   \tilde  \epsilon(\theta)=\epsilon \theta,\qquad c(\theta) = \frac{(\theta-\bar{\theta})^3}{3},
\end{equation*}
where $\epsilon>0$ is fixed and $\bar{\theta}>0$ represents an high temperature in which our model switch from a double well-potential to a single one. The final system reads as follows
\begin{align}
    \label{eq:phi-to-analyse}
    \partial_t \phi-\alpha\Delta\partial_t\phi + 
     \epsilon\bar{\theta} \Delta^2 \phi &=\tilde f_1(\phi,\theta),
    \\
    \label{eq:theta-to-analyse}
    k_B \partial_t \theta-\kappa\Delta \theta&= \tilde f_2(\phi,\theta),
\end{align}
where 
\begin{align*}
    \tilde f_1(\phi, \theta) 
    &:= -\epsilon 
    \Delta ((\theta-\bar{\theta})\Delta \phi ) + 
    \Delta\Big( \frac{1}{\epsilon\theta} \partial_\phi W(\phi,\theta)\Big),\\
   \tilde  f_2(\phi,\theta)&:=
    \alpha(\partial_t \phi)^2+
    \epsilon\theta \,\partial_t\nabla\phi\cdot\nabla\phi - \theta \partial_t \bigg[\frac{1}{\epsilon\theta^2} W(\phi,\theta)-\frac{1}{\epsilon\theta } (\theta- \bar{\theta})^2 \phi^2\bigg] +|\alpha\nabla \partial_t \phi+\nabla [\mu(\phi,\theta)]|^2.
\end{align*}
\end{proof}

\section{Local-in-time classical solutions}\label{sec:well-posedness}
The main goal of the present paragraph is to derive the existence of classical solutions to problem \eqref{eq:phi-to-analyse}--\eqref{eq:theta-to-analyse}, when considered in the whole space $\mathbb{R}^d$, with dimension $d\geq 2$. At the currant stage, a global in time approach does not seem to be accessible for general initial data. We will therefore address the existence and uniqueness of local classical solutions for our model. We treat in particular classical solutions that belong to Besov spaces with high indexes of regularity. Detailed information about the considered Besov ansatz is given in Section \ref{section:Besov-spaces}.  

\begin{theorem}\label{thm:well-posedness}
Let $\phi_0\in B^{d/2+2}$, $\theta_0 - \bar{\theta} \in B^{d/2}$ for a positive constant temperature $\bar{\theta}$ and let $\alpha>0$, $\kappa>0$ and $\kappa_B>0$ be parameters. Then there exists a small positive constant $\varepsilon_0\in [0,1)$,  which depends only on the dimension $d$, such that for any $\epsilon,\,\bar{\theta},\,\phi_0,\,\theta_0$ satisfying the smallness conditions
\begin{equation}\label{smallness-condition}
\begin{aligned}
    \epsilon \| \Delta \phi_0 \|_{B^{\frac{d}{2}}}  
    &+ \frac{1}{\bar{\theta} }
    \max\Big\{1, \frac{1}{\epsilon}\Big\} (1+\| \phi_0 \|_{B^{\frac{d}{2}}})^4
    < \varepsilon_0
    \min\big\{1, \alpha,\kappa, \kappa_B \big\},\\
       \| \theta_0 - \bar{\theta}\|_{B^\frac{d}{2}} 
    &
       <
    \left(
    \min 
    \Big\{
        \epsilon^2,\, 
        \frac{1 }{1+\epsilon}
    \Big\}
    \frac{
         \varepsilon_0\min\{1,\alpha,\kappa, \kappa_B\}
    }{\bar{\theta}(1+\alpha)}
    \right)^2,
\end{aligned}
\end{equation}
the system \eqref{eq:phi-to-analyse}--\eqref{eq:theta-to-analyse} admits a unique classical solution $(\phi,\theta)$ in $(0,T)\times \mathbb{R}^d$, for a sufficiently small time $T>0$. Furthermore, the solution $(\phi,\theta)$ belongs to the function space
\begin{alignat*}{8}
    \phi &\in \tilde{\mathcal{C}}([0,T],B^{\frac{d}{2}+2}),\quad 
    &&\Delta^2\phi \in
    L^1(0,T;B^{\frac{d}{2}}),\qquad 
    &&&&
    \partial_t \phi \in \tilde  L^2(0,T; B^{d/2}),\quad 
    \partial_t \nabla  \phi \in\tilde L^2(0,T; B^{d/2})
    \\
    \theta &\in 
    \tilde{\mathcal{C}}([0,T],B^{\frac{d}{2}}), \quad 
    &&\Delta \theta\in 
    L^1(0,T;B^{\frac{d}{2}}),
    \qquad 
    &&&& \partial_t \theta \in L^1(0,T; B^{d/2}).
\end{alignat*}
\end{theorem}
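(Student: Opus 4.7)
The plan is to construct the classical solution by a Picard-type iteration scheme in a complete metric space adapted to the two linear operators appearing in \eqref{eq:phi-to-analyse}--\eqref{eq:theta-to-analyse}, namely the fourth-order pseudo-parabolic operator $\partial_t - \alpha\Delta\partial_t + \epsilon\bar\theta\,\Delta^2$ and the heat operator $k_B\partial_t-\kappa\Delta$. First I would rewrite the problem in terms of $(\phi,\vartheta)$ with $\vartheta:=\theta-\bar\theta$, so that the right-hand sides $\tilde f_1,\tilde f_2$ become genuinely quadratic or higher in the perturbations (and in $\vartheta$, $\nabla\vartheta$, $\Delta\phi$, $\partial_t\phi$, $\nabla\partial_t\phi$). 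The two smallness assumptions in \eqref{smallness-condition} are chosen precisely so that the data lie in a small ball of the solution space once the scaling is done; in particular the $\epsilon\bar\theta$ coefficient in front of $\Delta^2\phi$ governs the parabolic gain for $\phi$, while the $\kappa/k_B$ ratio governs that of $\vartheta$.

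Concretely, I would define
\begin{equation*}
    E_T:=\Bigl\{(\phi,\vartheta):\ \phi\in\tilde{\mathcal C}([0,T];B^{\frac d2+2}),\ \Delta^2\phi\in L^1_TB^{\frac d2},\ \partial_t\phi,\nabla\partial_t\phi\in\tilde L^2_TB^{\frac d2},\ \vartheta\in\tilde{\mathcal C}([0,T];B^{\frac d2}),\ \Delta\vartheta,\partial_t\vartheta\in L^1_TB^{\frac d2}\Bigr\},
\end{equation*}
and a closed ball $\mathcal B_R\subset E_T$ of small radius $R$. For $(\bar\phi,\bar\vartheta)\in\mathcal B_R$, the linear a-priori estimates (Lemma~\ref{lemma:a-priori-estimates-for-phi} and Lemma~\ref{lemma:a-priori-estimates-for-theta}) yield a solution map $\mathcal T$ by solving
\begin{align*}
    \partial_t\phi-\alpha\Delta\partial_t\phi+\epsilon\bar\theta\,\Delta^2\phi&=\tilde f_1(\bar\phi,\bar\vartheta+\bar\theta),\\
    k_B\partial_t\vartheta-\kappa\Delta\vartheta&=\tilde f_2(\bar\phi,\bar\vartheta+\bar\theta),
\end{align*}
with initial data $\phi_0,\vartheta_0=\theta_0-\bar\theta$. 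The core of the argument is to show that $\mathcal T$ maps $\mathcal B_R$ into itself and is a contraction for small $T$ and for data satisfying \eqref{smallness-condition}.

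The main technical step is bounding $\tilde f_1,\tilde f_2$ in the norms of $L^1_TB^{d/2}$ and $L^1_TB^{d/2}$ respectively. For this I would rely on the Besov product and composition rules cited in the paper (Lemma~\ref{lemma:action_of_smooth_function_on_B}): the algebra property $\|fg\|_{B^{d/2}}\lesssim\|f\|_{B^{d/2}}\|g\|_{B^{d/2}}$, the tame estimate at the higher index $d/2+2$, and the composition bound for $W(\phi,\theta)/(\epsilon\theta)$, which explains why we need $\theta$ to stay close to $\bar\theta>0$ (to control $1/\theta$ via a smooth function of the small perturbation $\vartheta$). The delicate contributions are (i) the quasilinear term $-\epsilon\Delta((\theta-\bar\theta)\Delta\phi)$ in $\tilde f_1$, which must be distributed between the gain from $\Delta^2\phi\in L^1_TB^{d/2}$ and the $L^\infty_T$-control of $\vartheta$ in $B^{d/2}$ (losing a $\sqrt T$ by Hölder in time), and (ii) the quadratic terms $\alpha(\partial_t\phi)^2$, $\epsilon\theta\,\partial_t\nabla\phi\cdot\nabla\phi$ and $|\alpha\nabla\partial_t\phi+\nabla\mu|^2$ in $\tilde f_2$, which we control in $L^1_TB^{d/2}$ by $\tilde L^2_TB^{d/2}$-norms of $\partial_t\phi,\nabla\partial_t\phi$ together with $L^\infty_TB^{d/2+2}$ for $\phi$. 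These yield an estimate of the form
\begin{equation*}
    \|\mathcal T(\bar\phi,\bar\vartheta)\|_{E_T}\le C\bigl(\|\phi_0\|_{B^{\frac d2+2}}+\|\vartheta_0\|_{B^{\frac d2}}\bigr)+C(1+R)^N\bigl(\sqrt T+R\bigr)R,
\end{equation*}
with a constant $C$ depending on $\epsilon,\bar\theta,\alpha,\kappa,k_B$ through the explicit combinations that appear in \eqref{smallness-condition}; the two smallness conditions are exactly what is needed to close this inequality in $\mathcal B_R$ for $T$ chosen small.

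For contraction I would apply the same linear estimates to differences $(\phi_1-\phi_2,\vartheta_1-\vartheta_2)$, using that all nonlinearities are polynomial or smooth-composition with Lipschitz behavior on $\mathcal B_R$; since the quadratic structure gives a prefactor of order $R+\sqrt T$, a strict contraction follows for small $T$, yielding existence and uniqueness of a fixed point, which is the desired classical solution by standard embeddings of $B^{d/2+2}$ into $\mathcal C^2$. The hardest part will be the quasilinear term $\Delta((\theta-\bar\theta)\Delta\phi)$ in $\tilde f_1$: naively it has the same order as $\epsilon\bar\theta\,\Delta^2\phi$, so it could destroy the maximal regularity estimate. This is handled by using that $\|\vartheta\|_{L^\infty_T B^{d/2}}$ is kept of order $\sqrt{\|\vartheta_0\|_{B^{d/2}}}$ by \eqref{smallness-condition}, which provides the small factor multiplying $\|\Delta^2\phi\|_{L^1_TB^{d/2}}$; this is precisely why the second smallness assumption in \eqref{smallness-condition} involves the square root, and why the a-priori estimates must be set up carefully to absorb this term on the left-hand side.
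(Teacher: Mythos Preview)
Your overall architecture---fixed point in a Chemin--Lerner ball using the linear estimates of Lemma~\ref{lemma:a-priori-estimates-for-phi} and Lemma~\ref{lemma:a-priori-estimates-for-theta} together with product and composition rules---matches the paper. However, there is a genuine gap in how you handle the phase field.

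You set up a ball $\mathcal B_R$ of \emph{small} radius in $E_T$ for the full unknown $(\phi,\vartheta)$, and your displayed self-map estimate carries the term $C\|\phi_0\|_{B^{d/2+2}}$ on the right. But the smallness hypotheses \eqref{smallness-condition} do \emph{not} make $\|\phi_0\|_{B^{d/2+2}}$ small: only the combinations $\epsilon\|\Delta\phi_0\|_{B^{d/2}}$ and $(1+\|\phi_0\|_{B^{d/2}})^4/\bar\theta$ are small (cf.\ Remark~\ref{rmk:smallness-condition}; $\phi_0$ is a phase field taking values near $\pm1$). Hence the free evolution of $\phi_0$ cannot lie in a small ball centered at $0$, and your closure argument cannot proceed as stated. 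Relatedly, the claim that $\tilde f_1,\tilde f_2$ are ``genuinely quadratic or higher in the perturbations'' is false for the same reason: the term $\frac{1}{\epsilon\theta}\Delta\big((\phi^2-1)\phi\big)$ is cubic in the \emph{large} variable $\phi$, not in a small perturbation.

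The paper's remedy is precisely the missing ingredient: one writes $\phi=\phi_L+\delta\phi$ with $\phi_L$ the solution of the linear equation $\partial_t\phi_L-\alpha\Delta\partial_t\phi_L+\epsilon\bar\theta\,\Delta^2\phi_L=0$, $\phi_L|_{t=0}=\phi_0$, and runs the fixed point on $(\delta\phi,\delta\theta)$ with $\delta\phi|_{t=0}=0$. Then $(\delta\phi,\delta\theta)$ does live in a genuinely small ball $\mathcal K_{\chi,T}$ (see \eqref{def:KchiT}), and the largeness of $\phi_0$ is absorbed in two ways: the $L^\infty_t$-norms of $\phi_L$ enter the estimates only through the harmless factor $(1+\|\phi_0\|_{B^{d/2}})^4$ which is compensated by $1/\bar\theta$ via \eqref{smallness-condition}, while all \emph{time-integrated} norms of $\phi_L$ (such as $\|\nabla\phi_L\|_{\tilde L^2_TB^{d/2}}$, $\|\Delta^2\phi_L\|_{L^1_TB^{d/2}}$, $\|\partial_t\nabla\phi_L\|_{\tilde L^2_TB^{d/2}}$) are made $\le\chi^2$ by choosing $T\le T_\chi$ as in \eqref{condition:phi_L-smaller-than-chi^2}. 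This is what produces the extra factor $\chi$ in the bounds of Lemma~\ref{lemma:f1-estimate} and is the mechanism by which the fixed point closes; your $\sqrt T$ factors alone, multiplied by a large $R$, do not suffice.
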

\noindent 
\begin{remark}
Some remarks are in order here. As already described in Remark \ref{rmk:smallness-condition}, the first relation \eqref{smallness-condition}
is not a standard smallness condition on the initial data. Indeed \eqref{smallness-condition} states that for any initial phase $\phi_0$ (which can thus be taken arbitrarily large) we can determine a sufficiently small $\epsilon>0$ and a sufficiently large temperature in $\bar{\theta}$, such that a local-in-time classical solution exists. The fact that our solutions are not global in time shall therefore be associated to the arbitrariness of the phase field $\phi_0$. One shall also remark that a small condition directly  on $\| \Delta \phi \|_{B^{d/2}}$ for the phase field $\phi_0$  would be in general unnatural, since we aim to model transitions between the pure states $\pm 1$.

\noindent Furthermore, we have imposed in the second inequality of \eqref{smallness-condition}  a more strict smallness condition between the distance of the initial temperature $\theta_0$ and the high constant temperature $\bar{\theta}$. 
In particular, an additional term of the form $1/(1+\alpha)^2$ has been added, which is somehow counterintuitive (at a first glance). Indeed $\alpha$ produces an additional regularising mechanism on the $\phi$-equation (cf.~$\alpha \partial_t\Delta \phi$ in \eqref{eq:phi-to-analyse}), thus we would expect that high values of $\alpha$ support our analysis, not that they further restrict the smallness relation. A deeper analysis however clarifies this aspect, since $\alpha$ does not only affect the $\phi$-equation, but also the entropy production $\theta\Delta^*$ in the temperature equation (cf.~the last term $\alpha | \nabla \partial_t \phi|^2$ of $f_2(\phi, \theta)$ in \eqref{eq:theta-to-analyse}). In other words, a more strict smallness relation copes with the major nonlinearities appearing in the equation of $\theta$.

\end{remark}

\noindent 
In what follows, we describe the main idea for the proof of Theorem \ref{thm:well-posedness} and postpone the detailed estimates that are rather involved to the subsequent sections. We shall first remark some aspects of the hypotheses on the initial data $(\phi_0,\,\theta_0-\bar{\theta})$. Both the initial phase field $\phi_0$ and the initial temperature $\delta\theta_0=\theta_0-\bar{\theta}$ are bounded in $B^{d/2}$ which is an algebra embedded in $L^\infty(\mathbb{R}^d)\cap \mathcal{C}(\mathbb{R}^d)$. Moreover we assume that $\theta_0$ is close to a constant value $\bar{\theta}>0$, which represents a sufficiently high temperature.


\smallskip
\noindent 
The function spaces of Theorem \ref{thm:well-posedness} yields both $\phi$ and $\theta$ to be in $\mathcal{C}^\infty((0,T]\times \mathbb{R}^d)$. Furthermore, if the initial data $\phi_0,\,\delta\theta_0$ belongs to the Schwarz space $\mathcal{S}(\mathbb{R}^d)$, then the solution belongs to $\mathcal{C}^\infty([0,T]\times \mathbb{R}^d)$, therefore we are indeed treating smooth solutions that are local in time. 

\smallskip
\noindent
 We now present the main strategy behind the proof of Theorem \ref{thm:well-posedness}. We make use of a Friedrichs scheme, coupled with some a-priori estimates for a linearised version of our system. To this end we shall first introduce the following function space:
\begin{equation}\label{def:KchiT}
\begin{aligned}
    \mathcal{K}_{\chi, T} &:= 
    \bigg\{
     (\delta \phi,\,  \delta \theta)
     :(0,T)\times \mathbb{R}^d \to \mathbb{R}^2 
     \quad \text{such that}
     \\
     &\delta \phi \in \tilde L^\infty(0,T;B^{\frac{d}{2}+2}),\,
     \Delta^2 \delta \phi\in  L^1(0,T;B^{\frac{d}{2}}),\,
     \partial_t \delta \phi \in \tilde L^2(0,T;B^{\frac{d}{2}}),
      \; \text{and}\;
     \partial_t \nabla \delta \phi \in \tilde L^2(0,T;B^{\frac{d}{2}}),
     \\
     &\delta \theta \,\in \tilde L^\infty(0,T;B^\frac{d}{2}),\;
     \Delta \delta \theta \in  L^1(0,T;B^{\frac{d}{2}})
     \; \text{and}\; 
     \partial_t \delta \theta \,\in   L^1(0,T;B^{\frac{d}{2}}), \quad 
      \| (\delta \phi, \delta \theta) \|_{\mathcal{K}}\leq \chi
    \bigg\}, 
\end{aligned}
\end{equation}
for a general parameter $\chi>0$ and a general time $T>0$. The corresponding norm $\| \cdot  \|_{\mathcal{K}}$ is here denoted by (we drop the notation on $\chi$ and $T$ for the sake of a compact presentation):
\begin{equation}\label{eq:norm-on-K}
\begin{aligned}
    \| (\delta \phi, \delta \theta) \|_{\mathcal{K}} := 
     \| \delta \phi \|_{\tilde L^\infty(0,T;B^{\frac{d}{2}+2})} &+ 
     \| \Delta^2 \delta \phi \|_{L^1(0,T;B^{\frac{d}{2}})} +
     \| \partial_t\delta  \phi \|_{\tilde L^2(0,T;B^{\frac{d}{2}})} +
     \| \partial_t\nabla \delta  \phi \|_{\tilde L^2(0,T;B^{\frac{d}{2}})} + \\
     & 
     +\| \delta \theta \|_{\tilde L^\infty(0,T;B^{\frac{d}{2}})} 
     +\|\Delta \delta \theta \|_{ L^1(0,T;B^{\frac{d}{2}})} +
     \| \partial_t \delta \theta \|_{ L^1(0,T;B^{\frac{d}{2}})}.
\end{aligned}
\end{equation}
Here, $\delta$ has a purely notation purpose, since  our final solution will eventually be given by $\phi(t,x) =\phi_L(t,x) + \delta \phi(t,x)$ and $\theta(t,x) = \bar{\theta} +\delta \theta (t,x)$, where $\bar{\theta}$ is the given constant temperature and  $\phi_L$ is solution of the linear system 
\begin{equation*}
    \partial_t \phi_L - \alpha \Delta\partial_t \phi_L + \epsilon \bar{\theta}\Delta^2\phi_L = 0,
\end{equation*}
defined in $(0,T)\times \mathbb{R}^d$ and with initial data $\phi_{L|t=0}= \phi_0$. At this stage $T>0$ is a general time that will be determined later in terms of $\chi$, $\phi_L$ and $\bar{\theta}$ (cf. \eqref{condition:phi_L-smaller-than-chi^2}).

\noindent 
We aim thus to define a suitable continuous operator $\mathcal{L}:\mathcal{K}_{\chi, T}\to \mathcal{K}_{\chi, T}$ which represents our system of PDE's \eqref{eq:phi-to-analyse}--\eqref{eq:theta-to-analyse}. To this end, for any $(\delta \phi, \delta \theta)\in \mathcal{K}_{\chi, T}$, we set 
\begin{equation}\label{tilde-system}
\mathcal{L}(\delta \phi, \delta \theta):=(\delta \tilde \phi, \delta \tilde \theta),\text{ where }(\delta \tilde \phi, \delta \tilde \theta)\text{ is the solution of }
\begin{cases}
    \partial_t \delta\tilde \phi - \alpha \partial_t \Delta\delta\tilde \phi + 
    \epsilon  \bar{\theta} \Delta^2 \delta\tilde \phi  = f_1(\delta\phi, \delta\theta),\\
    \kappa_B\partial_t\delta \tilde \theta - \kappa \Delta\delta \tilde \theta = f_2(\delta\phi, \delta\theta)
    ,\\
    (\delta \tilde \phi,\, \delta \tilde \theta)_{|t=0} = (0,\,\delta\theta_0)
    .
\end{cases}    
\end{equation}
with $f_1(\delta\phi, \delta \theta) = \tilde f_1(\phi_L+\delta\phi,\bar{\theta}+\delta \theta)$ and $f_2(\delta\phi, \delta \theta) = \tilde f_2(\phi_L+\delta\phi,\bar{\theta}+\delta \theta)$ being defined in accordance to the terms $\tilde f_1$ and $\tilde f_2$ of \eqref{eq:phi-to-analyse}--\eqref{eq:theta-to-analyse}, namely
\begin{equation}\label{def:f1-function}
\begin{aligned}
    f_1(\delta \phi, \delta \theta) = \!\!
    - \epsilon \Delta (\delta \theta \Delta (\phi_L \!+\!\delta \phi)) \!+\! 
    \Delta 
    \Big( 
        \frac{1}{\epsilon(\bar{\theta} + \delta \theta) }
        \Big( 
            ((\phi_L + \delta \phi)^2\!\!-1)(\phi_L + \delta \phi) + 
            \frac{(\delta \theta)^3}{3} (\phi_L + \delta \phi)
        \Big)\!
    \Big)
\end{aligned}
\end{equation}
and
\begin{equation}\label{def:f2}
\begin{aligned}
    &f_2(\delta \phi, \delta \theta):=
        \alpha(\partial_t \phi_L + \partial_t \delta \phi)^2+
        \epsilon\big(\bar{\theta}+\delta\theta\big)\partial_t(\nabla 
        \phi_L + \nabla \delta \phi)\cdot(\nabla 
        \phi_L + \nabla \delta \phi) + \\
        &-(\bar{\theta}+\delta\theta)\partial_t 
        \bigg[
            \frac{1}{\epsilon (\bar{\theta}\!+\!\delta\theta)^2} \Big( \frac{1}{4}(1-|\phi_L \!+\! \delta \phi|^2)^2 + 
            \frac{(\delta \theta)^3}{3}
            (\phi_L\! +\! \delta \phi)^2 \Big)\!-\!
            \frac{1}{\epsilon(\bar{\theta}+\delta\theta)}
            {(\delta \theta)^2}(\phi_L + \delta \phi)^2
        \bigg]  + \\
 &+\Big|\alpha\nabla\partial_t \phi \!\!-\!\!
 \nabla \Big(  \epsilon (\bar{\theta} + \delta \theta )(\Delta \phi_L\! + \!\Delta \delta\phi)  \!-\! 
 \frac{1}{\epsilon(\bar{\theta}\!+\!\delta\theta)}( (\phi_L + \delta \phi)^2-1)(\phi_L\! +\! \delta \phi) + 
    {  \frac{2(\delta \theta)^3}{3\epsilon (\bar{\theta}\!+\!\delta \theta)}} (\phi_L+ \delta \phi) \Big)\Big|^2\!\!.
\end{aligned}
\end{equation}
Through some rather involved estimates, in the forthcoming sections we show that the solution $\mathcal{L}(\delta \phi, \delta \theta)$ of \eqref{tilde-system} still belongs to $\mathcal{K}_{\chi, T}$, as long as $\chi>0$ and $T>0$ are sufficiently small, as well as the condition \eqref{smallness-condition} holds true. 

\smallskip
\noindent
Furthermore, by assuming $\chi$ sufficiently small, we will further show that the operator $\mathcal{L}$ is a contraction on $\mathcal{K}_{\chi, T}$. More precisely, for any $(\delta \phi_1, \delta \theta_1),\,(\delta \phi_2, \delta \theta_2)$,
\begin{equation}\label{relation-recursive-sequence}
    \| \mathcal{L}(\delta \phi_1, \delta \theta_1)-\mathcal{L}(\delta \phi_2, \delta \theta_2)\|_{\mathcal{K}}
    \leq L_{\chi}
    \| (\delta \phi_1, \delta \theta_1)-(\delta \phi_2, \delta \theta_2)\|_{\mathcal{K}},
\end{equation}
for a fixed positive constant $L_{\chi}<1$. This relation implies that the continuous operator $\mathcal{L}$  admits a unique fixed point in $\mathcal{K}_{\chi, T}$. This fixed point corresponds to the unique local solution given by Theorem \ref{thm:well-posedness}.

\smallskip\noindent
The forthcoming analysis shows that $(\delta \tilde \phi,\delta \tilde \theta)$ satisfies the inequalities of \eqref{def:KchiT} and the proof is performed mainly using the estimates provided by the following lemmas about a linearised version of the system.
\begin{lemma}\label{lemma:a-priori-estimates-for-phi}
    For any viscosity $\nu>0$, any damping $\alpha\geq 0$, any function $g \in L^1(0,T; B^{d/2})$ and any initial value $\varphi_0 \in B^{d/2}$, the following Cauchy problem in $(0,T)\times \mathbb{R}^d$ admits a unique solution $\varphi$ in $\tilde L^\infty(0,T; B^{d/2})\cap L^1(0,T;B^{d/2+4})$, with $\alpha \Delta \varphi \in \tilde L^\infty(0,T; B^{d/2})$:
    \begin{equation*}
        \partial_t \varphi  - \alpha \Delta \partial_t  \varphi + \nu \Delta^2  \varphi  = g,
        \qquad 
        \varphi_{|t=0} = \varphi_0.
    \end{equation*}
    Furthermore, there exists a constant $C>0$,  which depends only on the dimension $d\geq 1$, such that
    \begin{equation*}
    \begin{aligned}
        \|  \varphi             \|_{\tilde L^\infty(0,T; B^{\frac{d}{2}})} 
        &\leq 
        C 
        \Big(
            \| \varphi_ 0   \|_{B^{\frac{d}{2}}} + 
            \| g            \|_{L^1(0,T;B^{\frac{d}{2}})}
        \Big),\\
        \alpha 
        \|  \Delta \varphi  \|_{\tilde L^\infty(0,T; B^{\frac{d}{2}})} 
        &\leq 
        C 
        \Big(
            \alpha 
            \| \Delta \varphi_ 0   \|_{B^{\frac{d}{2}}} + 
            \| g            \|_{L^1(0,T;B^{\frac{d}{2}})}
        \Big),\\  
        \nu 
        \|  \Delta^2 \varphi            \|_{L^1(0,T;B^{\frac{d}{2}})}+
        \| \partial_t  \varphi          \|_{L^1(0,T;B^{\frac{d}{2}})}
        &\leq 
        C\Big(
            \| \varphi_ 0   \|_{B^{\frac{d}{2}}}+
            \alpha 
            \|  \Delta \varphi_0  \|_{B^{\frac{d}{2}}} + 
            \| g         \|_{L^1(0,T;B^{\frac{d}{2}})}
        \Big).
    \end{aligned}
    \end{equation*}
    Finally, if $\alpha>0$ and the function $g\in \tilde L^2(0,T; B^{d/2-1})$ is of the form $g = \Delta G$, we eventually have that
    \begin{equation*}
        \| \partial_t  \varphi \|_{\tilde L^2(0,T;B^{\frac{d}{2}})} + 
        \sqrt \alpha \| \partial_t \nabla \varphi \|_{\tilde L^2(0,T;B^{\frac{d}{2}})}
        \leq 
        C
        \Big(
             \sqrt{\nu} \| \Delta \phi_0 \|_{B^{\frac{d}{2}}}+
            \frac{1}{\sqrt{\alpha}}
            \|{\color{red}
            g
            }  \|_{ \tilde L^2(0,T;B^{\frac{d}{2}-1})}
        \Big).
    \end{equation*}
\end{lemma}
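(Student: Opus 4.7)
The plan is to Fourier-localize onto Littlewood-Paley dyadic blocks and derive frequency-by-frequency estimates, then reconstruct the Besov norms by summation. Denoting by $\Delta_j$ the Littlewood-Paley projector onto frequencies $|\xi|\sim 2^j$, and setting $\varphi_j := \Delta_j \varphi$, $g_j := \Delta_j g$, the equation projects to
\begin{equation*}
(1+\alpha|\xi|^2)\,\partial_t \hat\varphi_j + \nu|\xi|^4\, \hat\varphi_j = \hat g_j,
\end{equation*}
with the Duhamel representation
\begin{equation*}
\hat\varphi_j(t,\xi) = e^{-\lambda(\xi)t}\hat\varphi_{0,j}(\xi) + \int_0^t \frac{e^{-\lambda(\xi)(t-s)}}{1+\alpha|\xi|^2}\hat g_j(s,\xi)\,ds,\qquad \lambda(\xi) := \frac{\nu|\xi|^4}{1+\alpha|\xi|^2}.
\end{equation*}
Existence and uniqueness at each block are immediate from this scalar ODE, and the solution is recovered through a standard Friedrichs/density argument once the a priori bounds below are established.

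For the first two inequalities I would bound $|\hat\varphi_j|$ and $\alpha|\xi|^2|\hat\varphi_j|$ using, respectively, the trivial multiplier bounds $(1+\alpha|\xi|^2)^{-1}\leq 1$ and $\alpha|\xi|^2(1+\alpha|\xi|^2)^{-1}\leq 1$, together with $e^{-\lambda(\xi)t}\leq 1$; Plancherel then gives $\|\varphi_j(t)\|_{L^2} \leq \|\varphi_{0,j}\|_{L^2} + \int_0^t \|g_j(s)\|_{L^2}\,ds$ and its $\alpha\Delta$-counterpart, and weighting by $2^{jd/2}$ and summing in $j$ yields both $\tilde L^\infty(B^{d/2})$ controls. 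For the third inequality I would multiply the Duhamel formula by $\nu|\xi|^4 = \lambda(\xi)(1+\alpha|\xi|^2)$ and integrate in time, exploiting $\int_0^T \lambda(\xi)e^{-\lambda(\xi)(T-s)}\,ds \leq 1$, to obtain
\begin{equation*}
\nu\int_0^T|\xi|^4|\hat\varphi_j|\,dt \leq (1+\alpha|\xi|^2)\,|\hat\varphi_{0,j}| + \int_0^T|\hat g_j|\,dt,
\end{equation*}
which after summing gives the $L^1(B^{d/2})$ bound on $\nu\Delta^2\varphi$. The bound on $\|\partial_t\varphi\|_{L^1(B^{d/2})}$ then follows directly from the equation, since $(I-\alpha\Delta)^{-1}$ has Fourier symbol in $[0,1]$ and is therefore bounded with norm $\leq 1$ on every Besov space.

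For the final estimate I would instead perform an energy pairing, since the required $\alpha^{-1/2}$-scaling calls for exploiting the damping term directly. Pairing the dyadic equation with $\partial_t\varphi_j$ yields
\begin{equation*}
\|\partial_t\varphi_j\|_{L^2}^2 + \alpha\|\nabla\partial_t\varphi_j\|_{L^2}^2 + \tfrac{\nu}{2}\tfrac{d}{dt}\|\Delta\varphi_j\|_{L^2}^2 = -\int \nabla G_j \cdot \nabla \partial_t\varphi_j\,dx,
\end{equation*}
where the right-hand side uses $g_j = \Delta G_j$ after integration by parts. A Young inequality with weight $\alpha$ absorbs the gradient term on the left at the cost of $\frac{1}{2\alpha}\|\nabla G_j\|_{L^2}^2$; integrating in time and invoking Bernstein's inequality $\|\nabla G_j\|_{L^2}\lesssim 2^{-j}\|g_j\|_{L^2}$ converts the weight $2^{jd/2}$ into $2^{j(d/2-1)}$, thereby recovering precisely the $\tilde L^2(B^{d/2-1})$ norm of $g$ with prefactor $\alpha^{-1/2}$, together with $\sqrt\nu\,\|\Delta\varphi_0\|_{B^{d/2}}$ from the initial datum. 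The main obstacle throughout is the careful bookkeeping of constants in $\alpha$ and $\nu$ so that they appear exactly in the stated form: in particular the $\alpha$-independent bound for $\|\varphi\|_{\tilde L^\infty(B^{d/2})}$ forces one to treat $(I-\alpha\Delta)^{-1}$ as a bounded Fourier multiplier rather than absorbing it into the decay rate $\lambda(\xi)$, while the last estimate crucially requires $\alpha>0$ for the Young inequality to close.
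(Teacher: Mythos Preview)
Your proposal is correct and follows essentially the same strategy as the paper: Littlewood--Paley localization, block-by-block estimates, and summation to recover the Besov norms, with the final $\tilde L^2$ estimate obtained by pairing the localized equation with $\partial_t\varphi_j$ and using $g=\Delta G$ together with Young's inequality weighted by $\alpha$. The only cosmetic difference is that for the first three inequalities you work directly with the Duhamel formula and pointwise multiplier bounds, whereas the paper splits $\varphi=\varphi_L+\delta\varphi$ and derives the corresponding bounds on $\delta\varphi$ via $L^2$ energy estimates (multiplying by $\Delta_q\delta\varphi$); both routes are standard and yield the same constants in $\alpha$ and $\nu$.
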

\begin{lemma}\label{lemma:a-priori-estimates-for-theta}
For any function $h\in L^1(0,T; B^{\frac{d}{2}})$ with $s \in \mathbb{R}$ and any initial value $\theta_0\in B^{\frac{d}{2}}$, the following Cauchy problem in $(0,T)\times \mathbb{R}^d$ admits a unique solution $\delta \theta \in \tilde L^\infty(0,T; B^{s})\cap L^1(0,T; B^{s+2})$
\begin{equation*}
    \kappa_B\partial_t   \theta - \kappa \Delta   \theta = h,\qquad 
      \theta_{|t=0} = \theta_0.
\end{equation*}
Furthermore, there exists a constant $C>0$ such that
\begin{equation*}
    \kappa_B\|  \theta \|_{\tilde L^\infty(0,T;B^{s})} + \kappa \| \Delta  \theta \|_{  L^1(0,T;B^{s})} + 
    \kappa_B\| \partial_t \theta \|_{ L^1(0,T;B^{s})}
    \leq C\Big\{ \kappa_B\| \theta_0 \|_{B^{s}}+ \| h \|_{L^1(0,T;B^{s})}\Big\}.
\end{equation*}
\end{lemma}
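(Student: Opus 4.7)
The approach is the standard Littlewood--Paley / Chemin--Lerner treatment of the linear heat equation. The plan is to localize in frequency by applying a dyadic block $\Delta_j$ to the equation, use the explicit heat semigroup together with the spectral localization estimate to control each block, and then sum over $j$ with weight $2^{js}$ in $\ell^1$, which is precisely the definition of $B^s=B^s_{2,1}$ and of its Chemin--Lerner variant $\tilde L^p_T B^s$. I treat $s\in\mathbb{R}$ throughout, which covers the index $s=d/2$ appearing in the statement (the space labels in the statement are clearly meant as $B^s$).

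First, applying the dyadic block $\Delta_j$ to the equation gives $\kappa_B\partial_t\Delta_j\theta - \kappa\Delta\Delta_j\theta = \Delta_j h$ with initial datum $\Delta_j\theta_0$, whose Duhamel representation reads
\begin{equation*}
\Delta_j\theta(t) \;=\; e^{t(\kappa/\kappa_B)\Delta}\Delta_j\theta_0 \;+\; \frac{1}{\kappa_B}\int_0^t e^{(t-\tau)(\kappa/\kappa_B)\Delta}\Delta_j h(\tau)\, d\tau.
\end{equation*}
Since $\Delta_j f$ has Fourier support in an annulus of size $2^j$, the classical spectral localization estimate yields $\|e^{\sigma(\kappa/\kappa_B)\Delta}\Delta_j f\|_{L^2}\leq C e^{-c(\kappa/\kappa_B)2^{2j}\sigma}\|\Delta_j f\|_{L^2}$ for absolute constants $C,c>0$ and every $\sigma\geq 0$. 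Plugging this into the Duhamel identity, taking $L^\infty_T$- and $L^1_T$-norms, and invoking Young's convolution inequality in the time variable produces
\begin{equation*}
\|\Delta_j\theta\|_{L^\infty_T L^2} \;+\; \frac{2^{2j}\kappa}{\kappa_B}\|\Delta_j\theta\|_{L^1_T L^2} \;\leq\; C\|\Delta_j\theta_0\|_{L^2} \;+\; \frac{C}{\kappa_B}\|\Delta_j h\|_{L^1_T L^2}.
\end{equation*}

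Next I multiply both sides by $\kappa_B\, 2^{js}$ and sum over the dyadic index $j$. By definition of the Chemin--Lerner norms this gives the first two claimed estimates $\kappa_B\|\theta\|_{\tilde L^\infty_T B^s}+\kappa\|\Delta\theta\|_{L^1_T B^s}\leq C\{\kappa_B\|\theta_0\|_{B^s}+\|h\|_{L^1_T B^s}\}$. The bound on $\kappa_B\|\partial_t\theta\|_{L^1_T B^s}$ is then immediate from the equation, rewriting $\kappa_B\partial_t\theta = h+\kappa\Delta\theta$ and applying the triangle inequality in $L^1_T B^s$. Existence of the solution in the prescribed class follows from a Friedrichs truncation (regularising via a sharp frequency cut-off on $h$ and $\theta_0$, for which the Duhamel formula produces smooth solutions, and then passing to the limit using the uniform estimate just derived); uniqueness is immediate from linearity, since the difference of two solutions satisfies the same equation with vanishing data and is therefore ruled out by the very same inequality. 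No genuine obstacle is expected: this is a dyadic transcription of the classical heat semigroup bounds, and the Chemin--Lerner framework is tailored precisely to let the splitting between time- and frequency-integrability go through cleanly.
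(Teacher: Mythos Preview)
Your argument is correct and is the standard Chemin--Lerner treatment of the linear heat equation. The paper does not spell out a separate proof of this lemma but refers to the companion result for the $\phi$-equation (Lemma~\ref{lemma:a-priori-estimates-for-phi}); there the authors also localize with $\Dd_q$ but then run an $L^2$ \emph{energy} estimate on each block (multiply the localized equation by $\Dd_q\varphi$, integrate, and use Bernstein) before summing, whereas you use the Duhamel formula together with the explicit exponential decay of the heat semigroup on dyadic annuli and Young's inequality in time. For the heat equation these two routes are interchangeable: yours is marginally more direct because the semigroup is explicit, while the energy-estimate route is the one that transfers without change to the fourth-order operator of the $\phi$-equation. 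One minor caveat: the bound $\|e^{\sigma\nu\Delta}\Delta_j f\|_{L^2}\le C e^{-c\nu 2^{2j}\sigma}\|\Delta_j f\|_{L^2}$ you invoke requires annular support, hence $j\ge 0$; the low-frequency block $j=-1$ needs a separate (elementary) treatment, a point the paper also glosses over.
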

\noindent 
We postpone the proof of both lemmas to Section \ref{section:Besov-spaces}. We shall nevertheless remark that the viscosity $\nu>0$ of Lemma \ref{lemma:a-priori-estimates-for-phi} is defined in the $\phi$-equation in \eqref{tilde-system} by $\nu = \epsilon \bar{\theta}$, which depends both on the constant $\epsilon>0$ as well on our constant temperature $\bar{\theta}>0$. We shall thus spend some effort in explicitly determining the impact of this viscosity (and of course also of the other constants), in relation to our estimates.

\subsection{First part of the proof of Theorem \ref{thm:well-posedness}}

Starting from this section, we proceed to prove Theorem \ref{thm:well-posedness}. 
We show that the operator $\mathcal{L}$ defined in \eqref{tilde-system} maps $\mathcal{K}_{\chi, T}$ into itself, as long as $\chi$ and $T$ are assumed sufficiently small. 

\smallskip\noindent
For the sake of simplicity, we introduce the following compact notation $\tilde L^r_T B^s$ for the function space $\tilde L^r(0,T;B^s)$. The proof of Theorem \ref{thm:well-posedness} relies on the following estimates, given by Lemma \ref{lemma:a-priori-estimates-for-phi} and Lemma~\ref{lemma:a-priori-estimates-for-theta}:
\begin{equation}\label{est:first-basic-estimate-of-deltaphi-deltatheta}
    \begin{aligned}
        \|\delta \tilde  \phi  \|_{\tilde L^\infty_T B^\frac{d}{2}} + 
        \alpha 
        \| \Delta \delta \tilde  \phi \|_{\tilde L^\infty_T B^{\frac{d}{2}}}+ \epsilon \bar{\theta} \| \Delta^2 \delta \tilde  \phi \|_{L^1_T B^\frac{d}{2}}
        &\leq 
        \tilde C 
        \| f_1(\delta \phi,\,\delta \theta) \|_{L^1_T B^\frac{d}{2}},\\
        \| \partial_t \delta \tilde \phi \|_{\tilde L^2_T B^{\frac{d}{2}}} +
        \sqrt{\alpha}
        \| \partial_t\nabla  \delta \tilde \phi \|_{\tilde L^2_T B^{\frac{d}{2}}}
        &\leq 
        \tilde C 
        \| f_1(\delta \phi,\,\delta \theta) \|_{\tilde L^2_T B^{\frac{d}{2}-1}},
        \\
        \kappa_B
        \| \partial_t \delta \tilde \theta \|_{L^1_T B^{\frac{d}{2}}}
        +
        \kappa_B\|\delta \tilde  \theta  \|_{\tilde L^\infty_T B^\frac{d}{2}} 
        + 
        \kappa
        \| \Delta  \delta \tilde  \theta \|_{L^1_T B^\frac{d}{2}} 
        &\leq 
               \tilde C 
        \Big(
               \|\theta_0- \bar{\theta}\|_{B_{2, 1}^\frac{d}{2}}+ 
        \| f_2(\delta \phi,\,\delta \theta) \|_{L^1_T B^\frac{d}{2}}
        \Big),
    \end{aligned}
\end{equation}
where $\tilde C>1$ is a positive constant, which depends only on the dimension $d\geq 1$. Here, we have used the fact that the initial data $\delta \phi_{|t=0}=0$ is null, 
while $\delta\theta_{|t=0} = \theta_0 - \bar{\theta}$. In order to show the boundedness of $\mathcal{L}$ from  $\mathcal{K}_{\chi, T}$ to itself, we need to ensure that $( \delta\tilde \phi,\, \delta \tilde\theta)$ still belongs to $\mathcal{K}_{\chi,T}$, for any forcing data $(\delta \phi,\delta \theta)\in \mathcal{K}_{\chi,T}$. This can be done by showing that the right-hand side of \eqref{est:first-basic-estimate-of-deltaphi-deltatheta} is sufficiently small in terms of $\chi$. The main ingredient to prove this relation will be the next lemma, which allows to control both terms $f_1(\delta \phi,\,\delta \theta)$ and $f_2(\delta \phi,\,\delta \theta)$. 
\begin{lemma}\label{lemma:f1-estimate}
     Let the initial data $(\phi_0,\, \theta_0)$ satisfy the conditions of Theorem \ref{thm:well-posedness} and let $T_\chi \in (0,1]$ such that the following relation on the linearised state variable $\phi_L$ is satisfied
     \begin{equation}\label{condition:phi_L-smaller-than-chi^2}
     \begin{aligned}
       \| \nabla           \phi_L    \|_{\tilde L^2(0,T_\chi;B^{\frac{d}{2}})}
       &+\| \Delta           \phi_L    \|_{\tilde L^2(0,T_\chi;B^{\frac{d}{2}})}
       +
       \| \Delta^2         \phi_L      \|_{L^1(0,T_\chi;B^{\frac{d}{2}})} +
        \\
       &
        +\| \Delta \nabla    \phi_L      \|_{\tilde L^2(0,T_\chi;B^{\frac{d}{2}})}
        +\| \partial_t       \phi_L \|_{\tilde L^2(0,T_\chi;B^{\frac{d}{2}})}+    
        \| \partial_t \nabla \phi_L         \|_{\tilde L^2(0,T_{\chi}; B^\frac{d}{2})}
        \leq \chi^2.
     \end{aligned}
     \end{equation}
     Then for any $T\in (0,T_\chi)$ and any pair $(\delta\phi,\delta\theta)\in \mathcal{K}_{\chi,T}$
     \begin{enumerate}[(a)]
         \item the term $f_1(\delta \theta,\delta \phi)$ satisfies
     \begin{equation*}
     \begin{aligned}
         \| f_1(\delta \theta,\delta \phi) \|_{L^1_TB^\frac{d}{2}}&+
         \| f_1(\delta \theta,\delta \phi) \|_{\tilde L^2_TB^{\frac{d}{2}-1}}
           \\ 
         &\leq 
         R_1
         \left[
            \epsilon \| \Delta \phi_0 \|_{B^{\frac{d}{2}}} + 
            \frac{1}{\bar{\theta}}
            \max\Big\{ 
                1,\frac{1}{\epsilon }  \Big\}
            \left( \| \phi_0 \|_{B^{\frac{d}{2}}} + 1 \right)^4 \chi
             +
            (1+\epsilon) \chi
         \right]
        \chi,
     \end{aligned}
     \end{equation*}
     \item the term $f_2(\delta \theta,\delta \phi)$ fulfills
     \begin{equation*}
     \begin{aligned}     
        \| f_2(\delta \theta,\delta \phi) \|_{L^1_TB^\frac{d}{2}}
        &
        \leq
         R_2 
         \frac{\Big(\| \phi_0 \|_{B^{\frac{d}{2}}} + 1\Big)^4}{\epsilon\bar{\theta}}
         \Big(
            1 + 
            \chi 
         \max
         \Big\{ 
            1,\alpha\epsilon \bar{\theta},\,\alpha^2\epsilon \bar{\theta},\,\epsilon^2\bar{\theta}^2,\,\bar{\theta}^2
         \Big\}
         \Big)
         \chi,
     \end{aligned}
     \end{equation*}
     \end{enumerate}
     where the positive constants $R_1>1$ and $R_2>1$ depend only on the dimension $d$.
\end{lemma}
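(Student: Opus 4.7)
The plan is to bound each summand of $f_1$ and $f_2$ separately by a systematic application of three tools: (i) the algebra/tame product estimate in $B^{d/2}$, i.e.\ $\|fg\|_{B^{d/2}}\lesssim \|f\|_{B^{d/2}}\|g\|_{B^{d/2}}$ and the shifted version $\|fg\|_{B^{d/2+k}}\lesssim \|f\|_{B^{d/2}}\|g\|_{B^{d/2+k}} + \|f\|_{B^{d/2+k}}\|g\|_{B^{d/2}}$, which is available because $B^s = B^s_{2,1}$; (ii) Hölder's inequality in time to match $\tilde L^\infty_T$, $\tilde L^2_T$, and $L^1_T$ norms coming from the definition of $\mathcal{K}_{\chi,T}$ and from \eqref{condition:phi_L-smaller-than-chi^2}; and (iii) the composition estimate (Lemma \ref{lemma:action_of_smooth_function_on_B}) applied to $z\mapsto 1/(\bar\theta+z)$, valid once $\|\delta\theta\|_{\tilde L^\infty_T B^{d/2}}\lesssim \chi$ is small enough that $\bar\theta+\delta\theta$ is bounded away from zero; the second smallness assumption in \eqref{smallness-condition} guarantees this up to the initial datum, and it will be preserved on $[0,T]$ for $T$ chosen sufficiently small.

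The first preliminary step is to gather uniform bounds on the building blocks. From Lemma \ref{lemma:a-priori-estimates-for-phi} applied to $\phi_L$ one obtains $\|\phi_L\|_{\tilde L^\infty_T B^{d/2}}\lesssim \|\phi_0\|_{B^{d/2}}$ and $\alpha\|\Delta\phi_L\|_{\tilde L^\infty_T B^{d/2}} + \epsilon\bar\theta\|\Delta^2\phi_L\|_{L^1_T B^{d/2}}\lesssim \|\phi_0\|_{B^{d/2}}+\alpha\|\Delta\phi_0\|_{B^{d/2}}$. Combining with $(\delta\phi,\delta\theta)\in\mathcal{K}_{\chi,T}$ and \eqref{condition:phi_L-smaller-than-chi^2} yields, for $\chi\leq 1$,
\begin{equation*}
\|\phi_L+\delta\phi\|_{\tilde L^\infty_T B^{d/2}} \leq C(\|\phi_0\|_{B^{d/2}}+1),\qquad \|\bar\theta+\delta\theta\|_{\tilde L^\infty_T B^{d/2}}\leq C\bar\theta,
\end{equation*}
so that by the algebra property any polynomial expression $(\phi_L+\delta\phi)^k(\delta\theta)^j$ is controlled in $\tilde L^\infty_T B^{d/2}$ by $C(\|\phi_0\|_{B^{d/2}}+1)^k\chi^j$. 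The composition lemma then gives $\bigl\|1/(\bar\theta+\delta\theta) - 1/\bar\theta\bigr\|_{\tilde L^\infty_T B^{d/2}}\lesssim \chi/\bar\theta^2$, with a zero-order bound $\lesssim 1/\bar\theta$.

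For part (a), decompose $f_1 = -\epsilon\Delta I_1 + \Delta I_2$ where $I_1 = \delta\theta\,\Delta(\phi_L+\delta\phi)$ and $I_2$ is the rational expression. For the $L^1_T B^{d/2}$ norm, I use $\|\Delta I_j\|_{B^{d/2}}\lesssim \|I_j\|_{B^{d/2+2}}$ and the shifted product rule, distributing in time as $\tilde L^\infty_T\cdot L^1_T$ or $\tilde L^2_T\cdot \tilde L^2_T$. For the $\tilde L^2_T B^{d/2-1}$ norm, I instead use that $\Delta I_j$ is already a divergence of a gradient, so the $B^{d/2-1}$ bound follows from the $B^{d/2+1}$ bound on $I_j$ and the combination $\tilde L^\infty_T\cdot \tilde L^2_T$. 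The dominant $I_1$-contribution to the first estimate is $\epsilon\|\delta\theta\|_{\tilde L^\infty_T B^{d/2}}\|\Delta\phi_L\|_{L^1_T B^{d/2+2}}$, which by the a-priori bound on $\phi_L$ produces $\epsilon\|\Delta\phi_0\|_{B^{d/2}}\chi$; the $I_2$-contribution, after expanding the polynomial and applying the composition estimate, generates the factor $\tfrac{1}{\bar\theta}\max\{1,1/\epsilon\}(\|\phi_0\|_{B^{d/2}}+1)^4\chi$ through the worst term $\tfrac{1}{\epsilon\bar\theta}(\phi_L+\delta\phi)^3\Delta(\cdot)$; all remaining cross-terms are of order $(1+\epsilon)\chi^2$.

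For part (b), I split $f_2$ into a quadratic block $\alpha(\partial_t\phi)^2 + \epsilon(\bar\theta+\delta\theta)\partial_t\nabla\phi\cdot\nabla\phi + |\alpha\nabla\partial_t\phi - \nabla\mu|^2$ and a time-derivative-of-bracket block. The quadratic block is controlled in $L^1_T B^{d/2}$ by $L^2_T\cdot L^2_T$ in time together with the $B^{d/2}$ algebra, using the last line of \eqref{est:first-basic-estimate-of-deltaphi-deltatheta} and the bound on $\nabla\mu$ that expands as $\epsilon(\bar\theta+\delta\theta)\Delta\nabla(\phi_L+\delta\phi)$ plus lower-order polynomial pieces. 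This is where the viscous prefactor $\epsilon\bar\theta$ enters and generates the maximum $\max\{1,\alpha\epsilon\bar\theta,\alpha^2\epsilon\bar\theta,\epsilon^2\bar\theta^2,\bar\theta^2\}$, after squaring $\nabla\mu$ and combining with the $\alpha\nabla\partial_t\phi$ contribution. The bracket block is handled by applying $\partial_t$ by the Leibniz rule, producing products of $\partial_t\delta\theta$ or $\partial_t\delta\phi$ with bounded factors coming from the composition lemma; each such product lies in $L^1_T B^{d/2}$ by $\tilde L^\infty_T\cdot L^1_T$, giving contributions of order $(\|\phi_0\|_{B^{d/2}}+1)^4/(\epsilon\bar\theta)\cdot\chi$ (the $1$ in the parenthesis of the claimed bound absorbs the non-$\chi$ pieces that survive differentiation). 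The main technical obstacle is the $|\nabla\mu|^2$ term: controlling $\nabla\mu$ in $\tilde L^2_T B^{d/2}$ genuinely requires the full regularity $\|\Delta\nabla(\phi_L+\delta\phi)\|_{\tilde L^2_T B^{d/2}}$ from Lemma \ref{lemma:a-priori-estimates-for-phi}, and squaring it via the algebra gives the highest power $\bar\theta^2$ in the max. Collecting all pieces, using $\chi\leq 1$ and the smallness of $\|\theta_0-\bar\theta\|_{B^{d/2}}$ from \eqref{smallness-condition} to merge constants, yields the stated bounds with an absolute $R_1,R_2>1$ depending only on $d$.
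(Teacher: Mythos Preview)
Your overall strategy---product estimates in $B^{d/2}$, H\"older in time against the norms available in $\mathcal{K}_{\chi,T}$ and in \eqref{condition:phi_L-smaller-than-chi^2}, and the composition lemma for $1/(\bar\theta+\delta\theta)$---is exactly what the paper does. There is, however, a concrete gap in the way you propose to execute it for $I_2$.

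The shortcut $\|\Delta I_j\|_{B^{d/2}}\lesssim \|I_j\|_{B^{d/2+2}}$ is valid but not sharp at low frequencies, and using it on $I_2$ costs you the required factor of $\chi$. Indeed, the tame estimate gives $\|\phi^3\|_{L^1_T B^{d/2+2}}\lesssim \|\phi\|_{\tilde L^\infty_T B^{d/2}}^2\|\phi\|_{L^1_T B^{d/2+2}}$, and for inhomogeneous Besov $\|\phi_L\|_{L^1_T B^{d/2+2}}\sim \|\phi_L\|_{L^1_T B^{d/2}}+\|\Delta\phi_L\|_{L^1_T B^{d/2}}$; the first summand is only bounded by $T\|\phi_0\|_{B^{d/2}}$, which carries no power of $\chi$ from the hypotheses. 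After multiplying by $1/(\epsilon\bar\theta)$ you obtain a contribution of order $T\|\phi_0\|_{B^{d/2}}^3/(\epsilon\bar\theta)$ that does not fit into the claimed bound $R_1[\cdots]\chi$. The paper circumvents this by expanding $\Delta$ via the Leibniz rule \emph{before} applying the product law (their decomposition $\mathcal I_1,\dots,\mathcal I_8$), so that every surviving term contains at least one derivative of $\phi_L,\delta\phi$ or $\delta\theta$; those derivatives are what supply the extra $\chi$ through \eqref{condition:phi_L-smaller-than-chi^2} or the $\mathcal{K}_{\chi,T}$-norms. Your sketch even hints at this when you write ``the worst term $\tfrac{1}{\epsilon\bar\theta}(\phi_L+\delta\phi)^3\Delta(\cdot)$'', but that is inconsistent with your earlier declared route through $\|I_2\|_{B^{d/2+2}}$.

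A minor correction on $I_1$: the term that actually produces $\epsilon\|\Delta\phi_0\|_{B^{d/2}}\chi$ is the one in which the two extra derivatives fall on $\delta\theta$, namely $\epsilon\|\Delta\delta\theta\|_{L^1_T B^{d/2}}\|\Delta\phi_L\|_{\tilde L^\infty_T B^{d/2}}$ (with $\|\Delta\phi_L\|_{\tilde L^\infty_T B^{d/2}}\lesssim\|\Delta\phi_0\|_{B^{d/2}}$ from Lemma~\ref{lemma:a-priori-estimates-for-phi}); your displayed ``dominant'' combination $\epsilon\|\delta\theta\|_{\tilde L^\infty_T B^{d/2}}\|\Delta\phi_L\|_{L^1_T B^{d/2+2}}$ is instead of order $\epsilon\chi\cdot\chi^2$ by \eqref{condition:phi_L-smaller-than-chi^2}, not $\epsilon\|\Delta\phi_0\|_{B^{d/2}}\chi$. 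Once you fix these two points---expand $\Delta$ first on $I_2$ and correctly identify the $I_1$ contribution---your outline for both (a) and (b) goes through and matches the paper's argument.
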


\smallskip
\noindent 
Motivated by this result, we are in the position to show that $(\delta \tilde \phi,\,\delta \tilde \theta)$ still belongs to $\mathcal{K}_{\chi,T}$ whenever the relation \eqref{smallness-condition} holds true. As might be expected, this requires to impose a sufficiently small $\varepsilon_0$ and a sufficiently small parameter $\chi$, as we will show in the next steps. 
\noindent 
In what follows, we aim to show that the following estimates on the $\mathcal{K}_{\chi, T}$-norms hold true:
\begin{equation}\label{deltaphi-deltatheta-to-show}
    \begin{aligned}
         \|\delta \tilde  \phi \|_{\tilde L^\infty_T B^\frac{d}{2}} 
        + 
        \| \Delta \delta \tilde  \phi \|_{\tilde L^\infty_T B^\frac{d}{2}}
        + 
        \| \Delta^2 \delta \tilde  \phi \|_{L^1_T B^\frac{d}{2}} 
        +
        \| \partial_t \delta \tilde \phi \|_{\tilde L^2_T B^{\frac{d}{2}}}
        +
         \| \partial_t \nabla \delta \tilde \phi \|_{\tilde L^2_T B^{\frac{d}{2}}}
        &\leq \frac{\chi}{2},\\
        \|  \delta \tilde  \theta   \|_{\tilde L^\infty_T B^\frac{d}{2}} +
        \|  \Delta \delta \tilde  \theta    \|_{L^1_T B^\frac{d}{2}} +
        \| \partial_t \delta \tilde \theta  \|_{L^1_T B^{\frac{d}{2}}}
        &\leq \frac{\chi}{2}.
    \end{aligned}
\end{equation}
We begin with the $\delta \tilde \phi$-norms and we first obtain an estimate in terms of the forcing term $f_1(\delta \theta, \delta \phi)$. 
Since the smallness condition \eqref{smallness-condition} implies $\epsilon \bar{\theta}>1$, by invoking inequality \eqref{est:first-basic-estimate-of-deltaphi-deltatheta} we observe that
\begin{equation*}
    \begin{aligned}
        \|\delta \tilde  \phi & \|_{\tilde L^\infty_T B^\frac{d}{2}} 
        + 
        \| \Delta \delta \tilde  \phi \|_{\tilde L^\infty_T B^\frac{d}{2}}+ 
        \| \Delta^2 \delta \tilde  \phi \|_{L^1_T B^\frac{d}{2}} 
        +
        \| \partial_t \delta \tilde \phi \|_{\tilde L^2_T B^{\frac{d}{2}}}
        +
         \| \partial_t \nabla \delta \tilde \phi \|_{\tilde L^2_T B^{\frac{d}{2}}}
        \\
        &\leq 
        \frac{1}{\min\{1,\alpha\}}
        \Big( 
          \|\delta \tilde  \phi  \|_{\tilde L^\infty_T B^\frac{d}{2}} + 
        \alpha 
        \| \Delta \delta \tilde  \phi \|_{\tilde L^\infty_T B^\frac{d}{2}}+ \epsilon \bar{\theta} \| \Delta^2 \delta \tilde  \phi \|_{L^1_T B^\frac{d}{2}}  +
        \| \partial_t \delta \tilde \phi \|_{\tilde L^2_T B^{\frac{d}{2}}} +
        \sqrt{\alpha}
        \| \partial_t \nabla \delta \tilde \phi \|_{\tilde L^2_T B^{\frac{d}{2}}}
        \Big)\\
        &\leq 
        \frac{\tilde C}{\min\{1,\alpha\}}
        \Big( \| f_1(\delta \theta,\delta \phi) \|_{L^1_TB^\frac{d}{2}}+
        \| f_1(\delta \theta,\delta \phi) \|_{\tilde L^2_TB^{\frac{d}{2}-1}}
        \Big)\\
        &\leq 
        \frac{\tilde C}{\min\{1,\alpha, \kappa, \kappa_B\}}
        \Big( \| f_1(\delta \theta,\delta \phi) \|_{L^1_TB^\frac{d}{2}}+
        \| f_1(\delta \theta,\delta \phi) \|_{\tilde L^2_TB^{\frac{d}{2}-1}}
        \Big).
    \end{aligned}
\end{equation*}    
Next,
we combine part (a) of Lemma \ref{lemma:f1-estimate} together with the smallness condition in \eqref{smallness-condition}, to obtain
\begin{equation}\label{estimate-of-f1-in-terms-of-chi-and-eps}
\begin{aligned}
    \|\delta \tilde  \phi  \|_{\tilde L^\infty_T B^\frac{d}{2}} &+ 
        \| \Delta \delta \tilde  \phi \|_{\tilde L^\infty_T B^\frac{d}{2}}+ 
        \| \Delta^2 \delta \tilde  \phi \|_{L^1_T B^\frac{d}{2}} 
        +
        \| \partial_t \delta \tilde \phi \|_{\tilde L^2_T B^{\frac{d}{2}}}
        +
         \| \partial_t \nabla \delta \tilde \phi \|_{\tilde L^2_T B^{\frac{d}{2}}}\\
     &\leq 
    \frac{\tilde C R_1}{\min\{1,\alpha, \kappa, \kappa_B\}}
    \Big(\varepsilon_0\min\{1, \alpha, \kappa,\kappa_B\}(1+\chi) + (1+\epsilon)\chi
    \Big)\chi.
\end{aligned}
\end{equation}
We aim to show that the right-hand side of the above inequality is indeed bounded by $\chi/2$, for sufficiently small $\varepsilon_0$ and $\chi$. To this end, we first impose
\begin{equation}\label{range-of-epsilon0}
\varepsilon_0 \leq \frac{1}{8\tilde C \max\{R_1, R_2\}}
\end{equation}
 (we will soon exploit this relation, however we shall remark immediately that it depends uniquely on the dimension $d\geq 1$, as it was for $\tilde C$ in \eqref{est:first-basic-estimate-of-deltaphi-deltatheta} and $R_1$,  $R_2$ in Lemma \ref{lemma:f1-estimate}). 
Furthermore, we set the following range for the small parameter $\chi>0$:
\begin{equation}\label{range-of-chi}
     \| \theta_0 - \bar{\theta} \|_{B^\frac{d}{2}} 
    <  
     \frac{\min\{1,\alpha,\kappa, \kappa_B\}}{4\tilde C}\chi 
    <
    \left(
    \min 
    \Big\{
        \epsilon^2,\, 
        \frac{1 }{1+\epsilon}
    \Big\}
    \frac{
         \varepsilon_0\min\{1,\alpha,\kappa, \kappa_B\}
    }{\bar{\theta}(1+\alpha)}
    \right)^2,
\end{equation}
which restricts also the lifespan $T$ through \eqref{condition:phi_L-smaller-than-chi^2}. We remark that   this range of $\chi$ is well defined because of the second smallness condition in \eqref{smallness-condition}). In particular, we have that $\chi<1$, since
\begin{equation*}
    \chi < 
    \underbrace{
    \min\Big\{
        \epsilon^2,\, 
        \frac{1 }{1+\epsilon}
    \Big\}^2}_{<1}
    \frac{\varepsilon_0^2}{\bar{\theta}^2(1+\alpha)^2}
    4\tilde C
    \min\{1, \alpha, \kappa, \kappa_B\}
    <
    4\tilde C\varepsilon_0^2
    <1.
\end{equation*}
Coming back to our estimate, we plug the above relations into  \eqref{estimate-of-f1-in-terms-of-chi-and-eps}, to gather that
\begin{equation}\label{i-dont-know-how-to-call-it}
    \begin{aligned}
        \|\delta \tilde  \phi \|_{\tilde L^\infty_T B^\frac{d}{2}} 
        &+ 
        \| \Delta \delta \tilde  \phi \|_{\tilde L^\infty_T B^\frac{d}{2}}
        + 
        \| \Delta^2 \delta \tilde  \phi \|_{L^1_T B^\frac{d}{2}} 
        +
        \| \partial_t \delta \tilde \phi \|_{\tilde L^2_T B^{\frac{d}{2}}}
        +
         \| \partial_t \nabla \delta \tilde \phi \|_{\tilde L^2_T B^{\frac{d}{2}}}
        \\
        &\leq 
        \frac{\tilde C R_1 }{\min\{1,\alpha, \kappa, \kappa_B\}}
        \Big(
            \varepsilon_0
            \min\{1, \alpha, \kappa,\kappa_B\}(1+\chi) + (1+\epsilon) \chi
        \Big)\chi
        \\
        &\leq
         \frac{\tilde C}{\min\{1,\alpha,\kappa, \kappa_B\}}
         R_1
         \bigg( 
             \frac{\min\{1, \alpha, \kappa,\kappa_B\}}{8\tilde C\max\{R_1,R_2\}}(1+1) +  (1+\epsilon)\chi
         \bigg)\chi \\
         &\leq 
        \frac{\chi}{4}
         + 
         \frac{\tilde CR_1 (1+\epsilon)}{\min\{1,\alpha,\kappa, \kappa_B\}}
         \chi^2 .
    \end{aligned}
\end{equation}  
Next, we show that $\chi\leq \min\{1,\alpha,\kappa, \kappa_B\}/(4\tilde CR_1 (1+\epsilon))$, because of \eqref{range-of-chi}: 
\begin{equation*}
\begin{aligned}
    \chi
    &\leq 
     \min\Big\{
        \epsilon^2,\, 
        \frac{1 }{1+\epsilon}
    \Big\}^2
    \frac{4\tilde C\varepsilon_0^2}{\bar{\theta}^2(1+\alpha)^2}
    \min\{1, \alpha, \kappa, \kappa_B\}\\
    &\leq 
    \frac{1 }{(1+\epsilon)}
    4\tilde C\varepsilon_0^2
    \min\{1, \alpha, \kappa, \kappa_B\},\\
    &\leq 
    \frac{1 }{(1+\epsilon)}
    \frac{4\tilde C}{64\tilde C^2 \max\{R_1, R_2\}^2}
    \min\{1, \alpha, \kappa, \kappa_B\}
    \leq 
    \frac{1}{4\tilde CR_1 (1+\epsilon)}\min\{1, \alpha, \kappa, \kappa_B\},
\end{aligned}
\end{equation*}
Hence, plugging this inequality into \eqref{i-dont-know-how-to-call-it}, we finally deduce that
\begin{equation*}
        \|\delta \tilde  \phi \|_{\tilde L^\infty_T B^\frac{d}{2}} 
        + 
        \| \Delta \delta \tilde  \phi \|_{\tilde L^\infty_T B^\frac{d}{2}}
        + 
        \| \Delta^2 \delta \tilde  \phi \|_{L^1_T B^\frac{d}{2}} 
        +
        \| \partial_t \delta \tilde \phi \|_{\tilde L^2_T B^{\frac{d}{2}}}
        +
         \| \partial_t \nabla \delta \tilde \phi \|_{\tilde L^2_T B^{\frac{d}{2}}}
        \leq 
        \frac{\chi}{4} + \frac{\chi}{4} = \frac{\chi}{2},
\end{equation*}
which concludes the estimates of $\delta \tilde \phi$ in \eqref{deltaphi-deltatheta-to-show}. 

\noindent
We can now bring our attention to $\delta \tilde \theta$ and we apply the last inequality of \eqref{est:first-basic-estimate-of-deltaphi-deltatheta}: 
\begin{equation*}
    \begin{aligned}
        \|  \delta \tilde  \theta   &\|_{\tilde L^\infty_T B^\frac{d}{2}} +
        \|  \Delta \delta \tilde  \theta    \|_{L^1_T B^\frac{d}{2}} +
        \| \partial_t \delta \tilde \theta  \|_{L^1_T B^{\frac{d}{2}}} \\
        &\leq 
        \frac{1}{\min\{1,\kappa , \kappa_B\}}
        \Big(
            \kappa_B
            \|  \delta \tilde  \theta           \|_{\tilde L^\infty_T B^\frac{d}{2}} +
            \kappa
            \|  \Delta \delta \tilde  \theta    \|_{L^1 B^\frac{d}{2}} +
            \kappa_B
            \| \partial_t \delta \tilde \theta  \|_{ L^1_T B^{\frac{d}{2}}}
        \Big)
        \\
        &\leq 
       \frac{\tilde C}{\min\{1,\alpha,\kappa, \kappa_B\}} 
            \Big(
                        \| \theta_0 - \bar{\theta} \|_{B^\frac{d}{2}}+
            \| f_2(\delta \theta,\delta \phi)   \|_{L^1_TB^\frac{d}{2}} 
            \Big)
    \end{aligned}
\end{equation*}
Next, thanks to Lemma \ref{lemma:f1-estimate}, part (b),
\begin{equation}\label{almost-finished-for-theta}
    \begin{aligned}
         \|  \delta \tilde  \theta   &\|_{\tilde L^\infty_T B^\frac{d}{2}} +
        \|  \Delta \delta \tilde  \theta    \|_{L^1_T B^\frac{d}{2}} +
        \| \partial_t \delta \tilde \theta  \|_{L^1_T B^{\frac{d}{2}}}\\
        &\leq 
         \frac{\tilde C}{\min\{1,\alpha,\kappa, \kappa_B\}}
        \Big(
                      \| \theta_0 - \bar{\theta} \|_{B^\frac{d}{2}}+
         R_2\varepsilon_0 \min\{1,\alpha,\,\kappa, \kappa_B\}
         \Big( 
             1 +\chi 
         \max
         \Big\{ 
           1,\alpha\epsilon \bar{\theta},\,\alpha^2\epsilon \bar{\theta},\,\epsilon^2\bar{\theta}^2,\,\bar{\theta}^2
         \Big\}
         \Big)\chi 
        \Big)
    \end{aligned}
\end{equation}
and our aim is therefore to show that the right-hand side is bounded by $\chi/2$. From the range of the parameter $\chi$ in \eqref{range-of-chi}, one has 
\begin{equation}\label{estimates-on-chi-for-final-estimate}
        \| \theta - \theta_0 \|_{B^\frac{d}{2}}
        \leq 
        \frac{\min\{1, \alpha, \kappa, \kappa_B\}}{4\tilde C}
        \chi
        \quad
        \text{and}
        \quad 
        \chi 
         \max
         \Big\{ 
           1,\alpha\epsilon \bar{\theta},\,\alpha^2\epsilon \bar{\theta},\,\epsilon^2\bar{\theta}^2,\,\bar{\theta}^2
         \Big\}<1.
\end{equation}
The last inequality can be shown analysing each component (remember that all constants $\bar{\theta}$, $\tilde C$, $R_1$ and $R_2$ are larger than $1$, as well as $4\tilde C\varepsilon_0^2<1$ from \eqref{range-of-epsilon0}):
\begin{alignat*}{4}
    \chi \alpha \epsilon \bar{\theta} 
    &<  
    \min\Big\{
        \epsilon^\frac{3}{2},\, 
        \frac{\sqrt{\epsilon} }{1+\epsilon}
    \Big\}^2
    \frac{\alpha}{\bar{\theta}(1+\alpha)^2}4\tilde C\varepsilon_0^2
    &&<1,\\
    \chi \alpha^2 \epsilon \bar{\theta} 
    &<  
    \min\Big\{
        \epsilon^\frac{3}{2},\, 
        \frac{\sqrt{\epsilon} }{1+\epsilon}
    \Big\}^2
    \frac{\alpha^2}{\bar{\theta}(1+\alpha)^2}4\tilde C\varepsilon_0^2
    &&<1,\\
    \chi  \epsilon^2 \bar{\theta}^2 
    &<  
    \min\Big\{
        \epsilon^3,\, 
        \frac{\epsilon }{1+\epsilon}
    \Big\}^2
    \frac{\alpha}{\bar{\theta}(1+\alpha)^2}4\tilde C\varepsilon_0^2
    &&<1,\\
    \chi  \bar{\theta}^2 
    &<  
    \min\Big\{
        \epsilon^2,\, 
        \frac{1 }{1+\epsilon}
    \Big\}^2
    \frac{1}{ (1+\alpha)^2}4\tilde C\varepsilon_0^2
    &&<1.
\end{alignat*}
Plugging \eqref{range-of-epsilon0} and \eqref{estimates-on-chi-for-final-estimate}  into \eqref{almost-finished-for-theta}, we deduce that
\begin{equation*}
         \|  \delta \tilde  \theta   \|_{\tilde L^\infty_T B^\frac{d}{2}} +
        \|  \Delta \delta \tilde  \theta    \|_{L^1_T B^\frac{d}{2}} +
        \| \partial_t \delta \tilde \theta  \|_{L^1_T B^{\frac{d}{2}}}
        \leq 
        {\color{red} \frac{\chi}{4}} +
        \frac{\tilde C R_2}{\min\{1,\alpha,\,\kappa, \kappa_B\}}
        \frac{\min\{1,\alpha,\,\kappa, \kappa_B\}}{8\tilde CR_2}
        \Big( 1 + 1\Big)\chi  \leq \frac{\chi}{2},
\end{equation*}
which concludes the estimate for $\delta \tilde \theta$ in \eqref{deltaphi-deltatheta-to-show}. 
Summarising, we have shown that $(\delta \tilde \phi,\,\delta \tilde \theta)$ belongs to $\mathcal{K}_{\chi,T}$ (namely the operator $\mathcal{L}$ maps $\mathcal{K}_{\chi, T}$ into itself), as long as \eqref{range-of-epsilon0} and \eqref{range-of-chi}  are satisfied.

\noindent
Our estimates were based in  Lemma \ref{lemma:f1-estimate}, which 
we shall now prove.

\begin{proof}[Proof of Lemma \ref{lemma:f1-estimate}, part (a)]
    This part of the proof handles any term related to $f_1(\delta \theta,\delta \phi)$ in \eqref{def:f1-function}. In the subsequent analysis, we denote by $C$ a generic positive constants, which is harmless to our overall estimates (namely it depends only on the dimension $d$) and may vary from line to line. Specific dependence will be pointed out if necessary. 
    We further recall the the parameter $\chi>0$ of the space $\mathcal{K}_{T,\chi}$ in \eqref{def:KchiT} is taken sufficiently small, in particular $\chi<1$ (cf.~\eqref{range-of-chi}).
    We provide any detail to estimate the norm in $L^1_T B^{d/2}$ of $f_1(\delta \theta,\delta \phi)$, while we only sketch the estimates of the norm in $\tilde L^2_T B^{d/2-1}$ (since the procedure is analogous). 
    
    \noindent
    We first mention that the product operates continuously between the following function spaces
    \begin{equation}\label{product-between-chemin-lerner}
        \begin{alignedat}{4}
            &\tilde L^p_TB^\frac{d}{2} \times \tilde L^q_TB^\frac{d}{2} 
            &&\to  \tilde L^r_T B^\frac{d}{2},\\
            &\tilde L^p_TB^\frac{d}{2} \times \tilde L^q_TB^{\frac{d}{2}-1} 
            &&\to  \tilde L^r_T B^{\frac{d}{2}-1},
        \end{alignedat} 
        \quad \text{for any}\quad 1\leq \frac{1}{r}:=\frac{1}{p}+ \frac{1}{q},
    \end{equation}
    where the constant of continuity depends only on the dimension $d$.
    We develop the first term of $f_1(\delta \theta,\delta \phi)$ in in \eqref{def:f1-function}, by means of  
    \begin{equation*}
        \epsilon \Delta (\delta \theta \Delta (\phi_L + \delta \phi)) 
        = 
        \epsilon \Delta \delta \theta (\Delta \phi_L + \Delta \delta \phi) + 2\epsilon \nabla \delta \theta \cdot ( \Delta \nabla\phi_L +  \Delta \nabla \delta \phi) + 
        \epsilon 
        \delta \theta (\Delta^2\phi_L + \Delta^2 \delta \phi).
    \end{equation*}
    Thanks to the first continuity relation in \eqref{product-between-chemin-lerner}, with $(p,\,q,\,r)=(1,\infty,1)$ and $(p,\,q,\,r)=(2,2,1)$, we get
    \begin{equation*}
    \begin{aligned}
        \| \epsilon \Delta &(\delta \theta \Delta (\phi_L + \delta \phi) \|_{L^1_T B^\frac{d}{2}} 
        \leq 
        C
        \epsilon 
        \Big\{ 
            \| \Delta \delta \theta \|_{L^1_T B^\frac{d}{2}} 
            \big(
                \| \Delta        \phi_L \|_{\tilde L^\infty_T B^\frac{d}{2}} + 
                \| \Delta \delta \phi   \|_{\tilde L^\infty_T B^\frac{d}{2}}
            \big) 
            + \\ &+ 
            \| \nabla  \delta \theta \|_{\tilde L^2_T B^\frac{d}{2}} 
            \big(
                \| \Delta  \nabla \phi_L        \|_{\tilde L^2_T B^\frac{d}{2}} + 
                \| \Delta  \nabla \delta \phi   \|_{\tilde L^2_T B^\frac{d}{2}}
            \big) 
            + 
             \|   \delta \theta \|_{\tilde L^\infty_T B^\frac{d}{2}}
            \big(
                \| \Delta^2   \phi_L        \|_{L^1_T B^\frac{d}{2}} + 
                \| \Delta^2 \delta \phi     \|_{L^1_T B^\frac{d}{2}}
            \big) 
        \Big\}.
    \end{aligned}
    \end{equation*}
    We are now in the condition to invoke the smallness relation \eqref{condition:phi_L-smaller-than-chi^2} on the linear solution $\phi_L$, which implies the inequality 
    $\| \Delta  \nabla \phi_L        \|_{\tilde L^2_T B^{d/2}}+
     \| \Delta^2   \phi_L        \|_{L^1_T B^{d/2}}\leq \chi^2$.
    Furthermore, from Lemma \ref{lemma:a-priori-estimates-for-phi}, one has that
    $   \| \Delta        \phi_L \|_{\tilde L^\infty_T B^{d/2}}\leq 
        C\| \Delta       \phi_0 \|_{ B^{d/2}}$. Finally, 
    we recall that $(\delta \phi,\,\delta \theta)$ belongs to $\mathcal{K}_{\chi,\,T}$ in \eqref{def:KchiT}, which means $ \| \Delta \delta \theta \|_{L^1_T B^{d/2}}  +
    \| \nabla  \delta \theta \|_{\tilde L^2_T B^{d/2}} + \|   \delta \theta \|_{\tilde L^\infty_T B^{d/2}}\leq \chi$. These last relations tell us that
    \begin{align*}
        \| \epsilon \Delta (\delta \theta \Delta (\phi_L + \delta \phi) \|_{L^1_T B^\frac{d}{2}} 
        \leq 
        C\epsilon 
        \Big\{ \chi ( \| \Delta \phi_0 \|_{B^{\frac{d}{2}}} + \chi ) + 
        2\chi ( \chi^2 + \chi ) \Big\}
        \leq 
        C\epsilon \| \Delta \phi_0 \|_{B^{\frac{d}{2}}} \chi + C\epsilon\chi^2.
    \end{align*}
    We shall highlight that the above constant $C$ depends only on the dimension $d$. An an analogous estimate holds true,  when replaying the norm in $L^1_T B^\frac{d}{2}$ with the one in $\tilde L^2_T B^{\frac{d}{2}-1}$:
    \begin{equation*}
    \begin{aligned}
        \| \epsilon \Delta &(\delta \theta \Delta (\phi_L + \delta \phi) \|_{\tilde L^2_T B^{\frac{d}{2}-1}} 
        \leq 
        C
        \epsilon 
        \Big\{ 
            \| \Delta \delta \theta \|_{\tilde L^2_T B^{\frac{d}{2}-1}} 
            \big(
                \| \Delta        \phi_L \|_{\tilde L^\infty_T B^\frac{d}{2}} + 
                \| \Delta \delta \phi   \|_{\tilde L^\infty_T B^\frac{d}{2}}
            \big) 
            + \\ &+ 
            \| \nabla  \delta \theta \|_{\tilde L^\infty_T B^{\frac{d}{2}-1}} 
            \big(
                \| \Delta  \nabla \phi_L        \|_{\tilde L^2_T B^\frac{d}{2}} + 
                \| \Delta  \nabla \delta \phi   \|_{\tilde L^2_T B^\frac{d}{2}}
            \big) 
            + 
             \|   \delta \theta \|_{\tilde L^\infty_T B^\frac{d}{2}}
            \big(
                \| \Delta^2   \phi_L        \|_{\tilde L^2_T B^{\frac{d}{2}-1}} + 
                \| \Delta^2 \delta \phi     \|_{\tilde L^2_T B^{\frac{d}{2}-1}}
            \big) 
        \Big\}\\
    &\leq C\epsilon \| \Delta \phi_0 \|_{B^{\frac{d}{2}}} \chi + C\epsilon\chi^2.
    \end{aligned}
    \end{equation*}
    This last estimate is in particular achieved through the following interpolations and Bernstein inequalities 
    \begin{equation*}
    \begin{aligned}
         \| \Delta \delta \theta \|_{\tilde L^2_T B^{\frac{d}{2}-1}} 
         \leq C  \| \nabla \delta \theta \|_{\tilde L^2_T B^{\frac{d}{2}}} 
         \leq C
         \| \delta \theta \|_{\tilde L^\infty_T B^{\frac{d}{2}}}^\frac{1}{2}
         \| \Delta \delta \theta \|_{L^1_T B^{\frac{d}{2}}}^\frac{1}{2}
         &\leq C\chi,\\
         \| \Delta^2   \phi_L        \|_{\tilde L^2_T B^{\frac{d}{2}-1}}  
         \leq C
         \| \nabla \Delta   \phi_L        \|_{\tilde L^2_T B^{\frac{d}{2}}}
         &\leq C\chi^2,\\
         \| \Delta^2 \delta \phi     \|_{\tilde L^2_T B^{\frac{d}{2}-1}}
         \leq C
         \| \nabla \Delta   \delta \phi         \|_{\tilde L^2_T B^{\frac{d}{2}}}
         \leq C
         \|  \Delta   \delta \phi         \|_{\tilde L^\infty_T B^{\frac{d}{2}}}^\frac{1}{2}
         \|  \Delta^2   \delta \phi         \|_{L^1_T B^{\frac{d}{2}}}^\frac{1}{2}
         &\leq C\chi.
    \end{aligned}
    \end{equation*}
    Next, we aim to estimate the remaining term of $f_1(\delta \phi, \delta \theta)$ in \eqref{def:f1-function}. The argument relies on the following decomposition
    \begin{equation}\label{eq:def-I1toI6}
        \Delta 
         \left( 
        \frac{1}{\epsilon(\bar{\theta} + \delta \theta) }
        \left( 
            ((\phi_L + \delta \phi)^2-1)(\phi_L + \delta \phi) + 
                \delta \theta^2
            (\phi_L + \delta \phi)
        \right)
        \right)
        = \sum_{n=1}^8 \mathcal{I}_n,
    \end{equation}
    where each term $\mathcal{I}_n$, $n=1,\dots,8$, corresponds to
    \begin{equation}\label{def-of-all-I}
    \begin{aligned}
        \mathcal{I}_1 &:= \Delta \delta \theta\frac{\delta \theta (\delta \theta +2 \bar{\theta} ) }{\epsilon\bar{\theta}^2(\bar{\theta} + \delta\theta )^2}
        \left( 
            ((\phi_L + \delta \phi)^2-1)(\phi_L + \delta \phi) + 
                (\delta \theta)^2
            (\phi_L + \delta \phi)
        \right),\\
        \mathcal{I}_2 &:= - \frac{\Delta \delta \theta }{\epsilon \bar{\theta}^2}
        \left( 
            ((\phi_L + \delta \phi)^2-1)(\phi_L + \delta \phi) +
                (\delta \theta)^2
            (\phi_L + \delta \phi)
        \right),\\
        \mathcal{I}_3 &:= 2\frac{\delta \theta (\delta \theta +2 \bar{\theta} ) }{\epsilon\bar{\theta}^2(\bar{\theta} + \delta\theta )^2}\nabla \delta \theta\cdot \nabla 
        \left( 
            ((\phi_L + \delta \phi)^2-1)(\phi_L + \delta \phi) + 
                 (\delta \theta)^2
            (\phi_L + \delta \phi)
        \right),\\
        \mathcal{I}_4 &:=
         -
         2\frac{1}{\epsilon\bar{\theta}^2}\nabla \delta \theta \cdot 
         \nabla \left( 
            ((\phi_L + \delta \phi)^2-1)(\phi_L + \delta \phi) + 
                 (\delta \theta)^2
            (\phi_L + \delta \phi)
        \right),\\
        \mathcal{I}_5 &:= 
        - \frac{\delta \theta}{\epsilon \bar{\theta}(\bar{\theta} + \delta \theta)}\Delta \left( 
            ((\phi_L + \delta \phi)^2-1)(\phi_L + \delta \phi) + 
                 (\delta \theta)^2
            (\phi_L + \delta \phi)
        \right),\\
        \mathcal{I}_6 &:= 
         \frac{1}{\epsilon \bar{\theta}}\Delta \left( 
            ((\phi_L + \delta \phi)^2-1)(\phi_L + \delta \phi) + 
                (\delta \theta)^2
            (\phi_L + \delta \phi)
        \right),\\
        \mathcal{I}_7 &:= -2|\nabla \delta \theta|^2\frac{\delta \theta (\delta \theta^2 +3\bar{\theta}^2 + 3\bar{\theta}\delta \theta ) }{\epsilon\bar{\theta}^3(\bar{\theta} + \delta\theta )^3}
        \left( 
            ((\phi_L + \delta \phi)^2-1)(\phi_L + \delta \phi) + 
                 (\delta \theta)^2
            (\phi_L + \delta \phi)
        \right)\\
        \mathcal{I}_8 &:= 2|\nabla \delta \theta|^2\frac{1 }{\epsilon\bar{\theta}^3}
        \left( 
            ((\phi_L + \delta \phi)^2-1)(\phi_L + \delta \phi) + 
                 (\delta \theta)^2
            (\phi_L + \delta \phi)
        \right).
    \end{aligned}
    \end{equation}
    In particular, any term related to $1/(\varepsilon(\bar{\theta} + \delta \theta))$ has been split into a constant (cf. $\mathcal{I}_2$, $\mathcal{I}_4$ and $\mathcal{I}_6$) 
    and a function depending on $\delta \theta$ which vanishes whenever $\delta \theta \equiv 0$ (cf. $\mathcal{I}_1$, $\mathcal{I}_3$ and $\mathcal{I}_6$). This choice will turn out useful when applying some estimates related to the composition of functions between Besov spaces (cf. Lemma \ref{lemma:action_of_smooth_function_on_B}).
    
    \smallskip
    \noindent
    We first address the estimate of $\mathcal{I}_1$. As in \eqref{product-between-chemin-lerner}, we invoke the continuity of the product from  
    $\tilde L^\infty_T B^{d/2}\times L^1_T B^{d/2}$ to $L^1_T  B^{d/2}$, from  
    $\tilde L^\infty_T B^{d/2}\times \tilde L^2_T B^{d/2-1}$ to $\tilde L^2_T  B^{d/2-1}$, and finally the algebra structure of $\tilde L^\infty_T B^{d/2}$. These relations ensures that
    \begin{equation*}
        \begin{aligned}
            \| \mathcal{I}_1 \|_{L^1_T B^\frac{d}{2}}&+
            \| \mathcal{I}_1 \|_{\tilde L^2_T B^{\frac{d}{2}-1}}
            \leq 
            C
            \big(
            \| \Delta \delta \theta \|_{L^1_T B^\frac{d}{2}}+
            \| \Delta \delta \theta \|_{\tilde L^2_T B^{\frac{d}{2}-1}}
            \big)
            \left\| 
                \frac{\delta \theta (\delta \theta +2 \bar{\theta}) }{\epsilon \bar{\theta}^2(\bar{\theta} + \delta\theta )^2}
            \right\|_{\tilde L^\infty_T B^\frac{d}{2}} 
            \Big\{ 
                \Big( 
                     \| \phi_L          \|_{\tilde L^\infty_T B^\frac{d}{2}}^2 +
                     \\
                &+ \| \delta \phi    \|_{\tilde L^\infty_T B^\frac{d}{2}}^2 
                      +
                      1
                \Big)
                \Big(
                     \| \phi_L          \|_{\tilde L^\infty_T B^\frac{d}{2}} +
                     \| \delta \phi    \|_{\tilde L^\infty_T B^\frac{d}{2}}
                \Big)
                +
                      \| \delta \theta \|_{\tilde L^\infty_T B^\frac{d}{2}}^2
                \Big(
                     \| \phi_L          \|_{\tilde L^\infty_T B^\frac{d}{2}} +
                      \| \delta \phi    \|_{\tilde L^\infty_T B^\frac{d}{2}}
                \Big)
            \Big\} \\
            &\leq 
            \frac{C}{\epsilon\bar{\theta}^2}
            \chi 
            \left\| 
                \frac{\delta \theta (\delta \theta +2 \bar{\theta}) }{(\bar{\theta} + \delta\theta )^2}
            \right\|_{\tilde L^\infty_T B^\frac{d}{2}} 
            \Big\{ 
                (\| \phi_0 \|_{B^\frac{d}{2}}^2 + \chi^2 + 1 )
                (\| \phi_0 \|_{B^\frac{d}{2}}+ \chi)+
                \chi^2
                (\| \phi_0 \|_{B^\frac{d}{2}}+\chi )
            \Big\}\\
            &\leq 
            \frac{C}{\epsilon\bar{\theta}^2}
            \chi 
            \left\| 
                \frac{\delta \theta (\delta \theta +2 \bar{\theta}) }{(\bar{\theta} + \delta\theta )^2}
            \right\|_{\tilde L^\infty_T B^\frac{d}{2}}
            \big(
                \| \phi_0  \|_{B^{\frac{d}{2}}}^2+1
            \big)
            \big(
                \| \phi_0         \|_{B^{\frac{d}{2}}}+\chi
            \big).
        \end{aligned}
    \end{equation*}
    To deal with the remaining norm, we introduce the smooth function $h:(0,\infty)\to \mathbb{R}$ with $h(x) = x(x + 2\bar{\theta} )/(\bar{\theta} +x)^2$ and we estimate the composition $h\circ \delta \theta$ in $\tilde L^\infty_T B^{d/2}$. As a key ingredient, we make use of Lemma \ref{lemma:action_of_smooth_function_on_B}, which states that for a general smooth function $h$, with $h(0) = 0$, 
    \begin{equation*}
        \| h\circ \delta \theta \|_{\tilde L^\infty_T B^\frac{d}{2}} 
        \leq 
        C \max_{l \in \{0,\dots, [d/2]+1\}}\Big(  \| \delta \theta \|_{L^\infty_TL^\infty}^l \sup_{|x| \leq C\| \delta \theta \|_{L^\infty_TL^\infty}} |h^{(l+1)}(x)|\Big)\| \delta \theta \|_{\tilde L^\infty_T B^\frac{d}{2}},
    \end{equation*}
    where the constant $C$ depends only on the dimension $d$. From the embedding 
    $B^{d/2} \hookrightarrow L^\infty(\mathbb{R}^d)$, one gather that $\| \delta \theta \|_{L^\infty_TL^\infty}\leq C\chi$, while the derivatives of $h$ are given by $h'(x) = 2\bar{\theta}^2/(\bar{\theta}+x)^3 $ and  
    $h^{(l+1)}(x) = (-1)^{l}(l+2)!\bar{\theta}^2/(\bar{\theta}+x)^{l+3}$, for any 
    $l\in  \mathbb{N}$. We deduce that
    \begin{equation}\label{estimate:comp_h_deltatheta}
    \begin{aligned}
        \| h\circ \delta \theta \|_{\tilde L^\infty_T B^\frac{d}{2}} 
        &\leq 
        C \max_{l \in \{0,\dots, [d/2]+1\}}
        \bigg(  
                \chi^l \sup_{|x| \leq C\chi }\left|2(l+2)!\frac{\bar{\theta}^2}{(x+\bar{\theta})^{l+3}}\right|
        \bigg)\| \delta \theta \|_{\tilde L^\infty_T B^\frac{d}{2}}
        \leq C \chi,
    \end{aligned}
    \end{equation}
    where we stress out that the final constant $C>0$ depends only on the dimension. Collecting the above estimates and remarking that $\bar{\theta}\geq 1$ from \eqref{smallness-condition}, we conclude that 
    \begin{equation*}
         \| \mathcal{I}_1 \|_{L^1_T B^\frac{d}{2}}+
        \| \mathcal{I}_1 \|_{\tilde L^2_T B^{\frac{d}{2}-1}}
         \leq 
         \frac{C}{\epsilon \bar{\theta}^2}
            \big(
                \| \phi_0  \|_{B^{\frac{d}{2}}}^2+1
            \big)
            \big(
                \| \phi_0         \|_{B^{\frac{d}{2}}}+\chi
            \big)\chi^2
            \leq
            \frac{C}{\bar{\theta}}
            \max\Big\{1, \frac{1}{\varepsilon}\Big\}
            \big(
                \| \phi_0  \|_{B^{\frac{d}{2}}}+1
            \big)^4
            \chi^2.
    \end{equation*}
    We now tackle those terms associated to $\mathcal{I}_2$ in \eqref{eq:def-I1toI6}. First we observe that
    \begin{align*}
        \| \mathcal{I}_2 \|_{L^1_T B^\frac{d}{2}} &+ 
        \| \mathcal{I}_2 \|_{\tilde L^2_T B^\frac{d}{2}}
        \leq 
        \frac{C}{\epsilon \bar{\theta}}
        \big(
            \| \Delta \delta \theta \|_{L^1_T B^\frac{d}{2}}+
            \| \Delta \delta \theta \|_{\tilde L^2_T B^{\frac{d}{2}-1}}
        \big)
        \Big( 
            (
                \| \phi_L       \|_{\tilde L^\infty_T B^\frac{d}{2}}^2 + 
                \| \delta \phi  \|_{\tilde L^\infty_T B^\frac{d}{2}}^2 + 
        \\
        &+1
            )( 
                \| \phi_L       \|_{\tilde L^\infty_T B^\frac{d}{2}} + 
                \| \delta \phi  \|_{\tilde L^\infty_T B^\frac{d}{2}}) 
            + 
        \| \delta \theta \|_{\tilde L^\infty_T B^\frac{d}{2}}^2 
        (
            \|\phi_L        \|_{\tilde L^\infty_T B^\frac{d}{2}} +  
            \| \delta \phi  \|_{\tilde L^\infty_T B^\frac{d}{2}})
        \Big)\\
        &\leq 
        \frac{C}{\epsilon \bar{\theta}}
        \chi  
        \Big\{
            ( \| \phi_0 \|_{B^{\frac{d}{2}}}^2 + \chi^2 + 1 )(\| \phi_0 \|_{B^{\frac{d}{2}}} + \chi) + \chi^2(\| \phi_0 \|_{B^{\frac{d}{2}}} + \chi)
        \Big\},
    \end{align*}
    therefore
    \begin{equation*}
        \| \mathcal{I}_2 \|_{L^1_T B^\frac{d}{2}}+
        \| \mathcal{I}_2 \|_{\tilde L^2_T B^{\frac{d}{2}-1}}
         \leq 
         \frac{C}{\epsilon \bar{\theta}}
         \big(
                \| \phi_0  \|_{B^{\frac{d}{2}}}^2+1
            \big)
            \big(
                \| \phi_0         \|_{B^{\frac{d}{2}}}+\chi
            \big)\chi^2
            \leq
            \frac{C}{\bar{\theta}}
            \max\Big\{1, \frac{1}{\varepsilon}\Big\}
            \big(
                \| \phi_0  \|_{B^{\frac{d}{2}}}+1
            \big)^4
            \chi^2.
    \end{equation*}
    Let us now turn to the estimate of $\mathcal{I}_3$ in \eqref{eq:def-I1toI6}:
    \begin{equation*}
        \begin{aligned}
            \| \mathcal{I}_3 \|_{L^1_T B^\frac{d}{2}}
            &+
            \| \mathcal{I}_3 \|_{\tilde L^2_T B^{\frac{d}{2}-1}}
            \leq 
            C
            \left\| 
                \frac{\delta \theta (\delta \theta +2 \bar{\theta} ) }{\epsilon \bar{\theta}^2(\bar{\theta} + \delta\theta )^2}
            \right\|_{\tilde L^\infty_T B^\frac{d}{2}} 
            \Big(
                \| \nabla \delta \theta \|_{\tilde L^2_T B^\frac{d}{2}} +
                \| \nabla \delta \theta \|_{\tilde L^\infty_T B^{\frac{d}{2}-1}}
            \Big)
            \Big( 
                      \|  \phi_L         \|_{\tilde L^\infty_T B^\frac{d}{2}}^2 +
                      \\
                      &
                        +
                      \| \delta \phi     \|_{\tilde L^\infty_T B^\frac{d}{2}}^2 
                        +
                      \| \delta \theta \|_{\tilde L^\infty_T B^\frac{d}{2}}^2
                      +
                      1 
            \Big)
            \Big(
                      \| \nabla  \phi_L          \|_{\tilde L^2_T B^\frac{d}{2}} +
                      \| \nabla  \delta \phi     \|_{\tilde L^2_T B^\frac{d}{2}} +
                      \| \nabla  \delta \theta   \|_{\tilde L^2_T B^\frac{d}{2}}
            \Big)\\
            &\leq 
            \frac{2}{\epsilon \bar{\theta}^2}
            \chi 
            \left\| 
                \frac{\delta \theta (\delta \theta +2 \bar{\theta}) }{(\bar{\theta} + \delta\theta )^2}
            \right\|_{\tilde L^\infty_T B^\frac{d}{2}} 
            \Big\{ 
                (\| \phi_0 \|_{B^{\frac{d}{2}}}^2 + \chi^2 + 1 )
                (\chi^2 + \chi )
            \Big\},
        \end{aligned}
    \end{equation*}
    which together with \eqref{estimate:comp_h_deltatheta} implies that
    \begin{equation*}
         \| \mathcal{I}_3 \|_{L^1_T B^\frac{d}{2}}+
         \| \mathcal{I}_3 \|_{\tilde L^2_T B^{\frac{d}{2}-1}} 
        \leq 
         \frac{C}{\epsilon \bar{\theta}^2}
         \left( \| \phi_0 \|_{B^{\frac{d}{2}}} + 1 \right)^4 \chi^2
         \leq
            \frac{C}{\bar{\theta}}
            \max\Big\{1, \frac{1}{\varepsilon}\Big\}
            \big(
                \| \phi_0  \|_{B^{\frac{d}{2}}}+1
            \big)^4
            \chi^2.
    \end{equation*}
    Concerning $\mathcal{I}_4$, we observe that this term satisfies similar properties as  $\mathcal{I}_2$ and $\mathcal{I}_3$, namely
    \begin{align*}
        \| \mathcal{I}_4        \|_{L^1_T B^\frac{d}{2}}+
        \| \mathcal{I}_4        \|_{\tilde L^2_T B^{\frac{d}{2}-1}}
        &\leq 
        \frac{C}{\epsilon \bar{\theta}}
        (
            \|  \nabla  \delta \theta \|_{\tilde L^2_T B^\frac{d}{2}}+
            \| \nabla \delta \theta \|_{\tilde L^\infty_T B^{\frac{d}{2}-1}}
        )
        \big(
            \| \phi_L           \|_{\tilde L^\infty_T B^\frac{d}{2}}^2 + 
            \| \delta \phi      \|_{\tilde L^\infty_T B^\frac{d}{2}}^2 +
             \\
        &
        \hspace{1cm}
            +
            \| \delta \theta    \|_{\tilde L^\infty_T B^\frac{d}{2}}^2+ 
            1
        \big)
        \big( 
            \| \nabla \phi_L        \|_{\tilde L^2_T B^\frac{d}{2}} + 
            \| \nabla \delta \phi   \|_{\tilde L^2_T B^\frac{d}{2}}+
            \| \nabla \delta \theta \|_{\tilde L^2_T B^\frac{d}{2}}
        \big) 
        \\ 
        &
        \leq 
        \frac{C}{\epsilon \bar{\theta}}
        \chi  
        ( \| \phi_0 \|_{B^{\frac{d}{2}}}^2 + \chi^2 + 1 )(\chi^2+\chi),
    \end{align*}
    which eventually leads to
    \begin{equation*}
        \| \mathcal{I}_4 \|_{L^1_T B^\frac{d}{2}}+
        \| \mathcal{I}_4        \|_{\tilde L^2_T B^{\frac{d}{2}-1}}
         \leq 
         \frac{C}{\epsilon \bar{\theta}}
         \Big( \| \phi_0 \|_{B^\frac{d}{2}} + 1 \Big)^4 \chi^2
         \leq
            \frac{C}{\bar{\theta}}
            \max\Big\{1, \frac{1}{\varepsilon}\Big\}
            \big(
                \| \phi_0  \|_{B^{\frac{d}{2}}}+1
            \big)^4
            \chi^2.
    \end{equation*}
    We next analyse $\mathcal{I}_5$ and $\mathcal{I}_6$. Redefining the function $h:(0,\infty)\to \mathbb{R}$ by $h(x) = x/(\bar{\theta} +x)$, we apply Lemma \ref{lemma:action_of_smooth_function_on_B} to obtain
    \begin{equation}\label{estimate:comp_h_deltatheta2}
    \begin{aligned}
        \| h\circ \delta \theta \|_{\tilde L^\infty_T B^\frac{d}{2}} 
        &\leq 
        C \max_{l \in \{0,\dots, [d/2]+1\}} 
        \bigg( 
            \| \delta \theta \|_{L^\infty_T L^\infty}^l 
            \sup_{|x| \leq C\| \delta \theta \|_{L^\infty_TL^\infty}} 
                \bigg| \frac{\bar{\theta} (l+1)!}{(\bar{\theta} + x)^{l+2}}
                \bigg|
        \bigg)
        \| \delta \theta \|_{\tilde L^\infty_T B^\frac{d}{2}}
        \leq C \chi.
    \end{aligned}
    \end{equation}
    With an analogous procedure as the one used in the previous estimate, we arrive at
    \begin{align*}
        \| \mathcal{I}_5 \|_{L^1_T B^\frac{d}{2}}+
        \| \mathcal{I}_5 \|_{\tilde L^2_T B^{\frac{d}{2}-1}}
        &\leq 
        \frac{C}{\epsilon \bar{\theta}}
        \| h\circ  \delta \theta \|_{\tilde L^\infty_T B^\frac{d}{2}}
        \Big\{
        \big(
            \| \phi_L           \|_{\tilde L^\infty_T B^\frac{d}{2}}^2 + 
            \| \delta \phi      \|_{\tilde L^\infty_T B^\frac{d}{2}}^2 +
            \| \delta \theta    \|_{\tilde L^\infty_T B^\frac{d}{2}}^2+ 
            1
        \big)
        \big( 
            \| \Delta \phi_L \|_{L^1_T B^\frac{d}{2}} + 
        \\ 
        &+    
            \| \Delta \delta \phi   \|_{L^1_T B^\frac{d}{2}}+
            \| \Delta \delta \theta \|_{L^1_T B^\frac{d}{2}}
        \big) 
        +
        \big(
            \| \nabla \phi_L        \|_{\tilde L^2_T B^\frac{d}{2}}^2 + 
            \| \nabla \delta \phi   \|_{\tilde L^2_T B^\frac{d}{2}}^2 +
            \| \nabla \delta \theta \|_{\tilde L^2_T B^\frac{d}{2}}^2 
        \big)\cdot 
        \\
        &\hspace{3cm}
        \cdot 
        \big( 
            \|   \phi_L        \|_{\tilde L^\infty_T B^\frac{d}{2}} + 
            \|  \delta \phi   \|_{\tilde L^\infty_T B^\frac{d}{2}}+
            \|  \delta \theta \|_{\tilde L^\infty_T B^\frac{d}{2}} +1
        \big)
         \Big\}
        \\ 
        &
        \leq 
        \frac{C}{\epsilon \bar{\theta}}
        \chi
        \Big\{
        ( \| \phi_0 \|_{B^\frac{d}{2}}^2 + \chi^2 + 1 ) (\chi^2 + \chi) + \chi^2
        ( \| \phi_0 \|_{B^\frac{d}{2}}  + \chi  + 1 )
        \Big\}
        \\ 
        &
         \leq
            \frac{C}{\bar{\theta}}
            \max\Big\{1, \frac{1}{\varepsilon}\Big\}
            \big(
                \| \phi_0  \|_{B^{\frac{d}{2}}}+1
            \big)^4
            \chi^2.
    \end{align*}
    We turn to the estimate of the term $\mathcal{I}_6$:
    \begin{equation*}
    \begin{aligned}
        \| \mathcal{I}_6 \|_{L^1_T B^\frac{d}{2}}
        &+
        \| \mathcal{I}_6 \|_{\tilde L^2_T B^{\frac{d}{2}-1}}
         \leq 
         \frac{C}{\epsilon\bar{\theta}}
        \Big\{
             \big(
                \| \phi_L           \|_{\tilde L^\infty_T B^\frac{d}{2}}^2 + 
                \| \delta \phi      \|_{\tilde L^\infty_T B^\frac{d}{2}}^2 +
                \| \delta \theta    \|_{\tilde L^\infty_T B^\frac{d}{2}}^2+ 
                1
            \big)
            \big( 
                \| \Delta \phi_L \|_{L^1_T B^\frac{d}{2}} + 
            \\ 
            &+    
                \| \Delta \delta \phi   \|_{L^1_T B^\frac{d}{2}}+
                \| \Delta \delta \theta \|_{L^1_T B^\frac{d}{2}}
            \big) 
        +
        \big(
            \| \nabla \phi_L        \|_{\tilde L^2_T B^\frac{d}{2}}^2 + 
            \| \nabla \delta \phi   \|_{\tilde L^2_T B^\frac{d}{2}}^2 +
            \| \nabla \delta \theta \|_{\tilde L^2_T B^\frac{d}{2}}^2+ 
            1
        \big)\cdot 
        \\
        &\hspace{3cm}
        \cdot 
        \big( 
            \| \nabla \phi_L        \|_{\tilde L^2_T B^\frac{d}{2}} + 
            \| \nabla \delta \phi   \|_{\tilde L^2_T B^\frac{d}{2}}+
            \| \nabla \delta \theta \|_{\tilde L^2_T B^\frac{d}{2}}
        \big) 
        \Big\}
        \\&
        \leq 
        \frac{C}{\epsilon \bar{\theta}}
        ( \| \phi_0 \|_{B^\frac{d}{2}}^2 + \chi^2 + 1 )\chi^2
         \leq
            \frac{C}{\bar{\theta}}
            \max\Big\{1, \frac{1}{\varepsilon}\Big\}
            \big(
                \| \phi_0  \|_{B^{\frac{d}{2}}}+1
            \big)^4
            \chi^2.
    \end{aligned}
    \end{equation*}
    We finally address the last terms $\mathcal{I}_7$ and $\mathcal{I}_8$ in \eqref{def-of-all-I}. First
    \begin{equation*}
        \begin{aligned}
             \| \mathcal{I}_7 \|_{L^1_T B^\frac{d}{2}}
            &+
            \| \mathcal{I}_7 \|_{\tilde L^2_T B^{\frac{d}{2}-1}}
            \leq 
            C
             \| \nabla \delta \theta \|_{\tilde L^2_T B^\frac{d}{2}}
            \Big(
                \| \nabla \delta \theta \|_{\tilde L^2_T B^\frac{d}{2}}+
                \| \nabla \delta \theta \|_{\tilde L^\infty_T B^{\frac{d}{2}-1}}
            \Big)
            \left\| 
               \frac{\delta \theta (\delta \theta^2 +3\bar{\theta}^2 + 3\bar{\theta}\delta \theta ) }{\epsilon\bar{\theta}^3(\bar{\theta} + \delta\theta )^3}
            \right\|_{\tilde L^\infty_T B^\frac{d}{2}} \cdot \\
            &\cdot 
                \Big( 
                      \|  \phi_L         \|_{\tilde L^\infty_T B^\frac{d}{2}}^2 
                        +
                      \| \delta \phi     \|_{\tilde L^\infty_T B^\frac{d}{2}}^2 +
                       \| \delta \theta  \|_{\tilde L^\infty_T B^\frac{d}{2}}^2     
                      +
                      1 
                \Big)
                \Big(
                      \| \phi_L          \|_{\tilde L^\infty_T B^\frac{d}{2}} +
                      \|  \delta \phi     \|_{\tilde L^\infty_T B^\frac{d}{2}}
                \Big) \\
                &\leq
            \frac{C}{\bar{\theta}}
            \max\Big\{1, \frac{1}{\varepsilon}\Big\}
            \big(
                \| \phi_0  \|_{B^{\frac{d}{2}}}+1
            \big)^4
            \chi^2,
        \end{aligned}
    \end{equation*}
    furthermore
     \begin{equation*}
        \begin{aligned}
             \| \mathcal{I}_8 \|_{L^1_T B^\frac{d}{2}}
            &+
            \| \mathcal{I}_8 \|_{\tilde L^2_T B^{\frac{d}{2}-1}}
            \leq 
            \frac{C}{\epsilon \bar{\theta}^3}
             \| \nabla \delta \theta \|_{\tilde L^2_T B^\frac{d}{2}}
            \Big(
                \| \nabla \delta \theta \|_{\tilde L^2_T B^\frac{d}{2}}+
                \| \nabla \delta \theta \|_{\tilde L^\infty_T B^{\frac{d}{2}-1}}
            \Big)
                \Big( 
                      \|  \phi_L         \|_{\tilde L^\infty_T B^\frac{d}{2}}^2 
                        + \\
                        &+
                      \| \delta \phi     \|_{\tilde L^\infty_T B^\frac{d}{2}}^2 +
                       \| \delta \theta  \|_{\tilde L^\infty_T B^\frac{d}{2}}^2     
                      +
                      1 
                \Big)
                \Big(
                      \| \phi_L          \|_{\tilde L^\infty_T B^\frac{d}{2}} +
                      \|  \delta \phi     \|_{\tilde L^\infty_T B^\frac{d}{2}}
                \Big) \leq 
            \frac{C}{\bar{\theta}}
            \max\Big\{1, \frac{1}{\varepsilon}\Big\}
            \big(
                \| \phi_0  \|_{B^{\frac{d}{2}}}+1
            \big)^4
            \chi^2.
        \end{aligned}
    \end{equation*}
    Collecting the above estimates, we eventually achieve the part (a) of Lemma \ref{lemma:f1-estimate}. 
\end{proof}

\begin{proof}[Proof of Lemma \ref{lemma:f1-estimate}, part (b)]
    The proof follows essentially the lines of part (a). We separately control each term of $f_2(\delta \phi,\,\delta \theta)$ in \eqref{def:f2}. As first term we bound
    \begin{align*}
        \| \alpha (\partial_t \phi_L +  \partial_t \delta \phi)^2 \|_{L^1_T B^{\frac{d}{2}}}
        \leq 2\alpha \| \partial_t \phi_L \|_{\tilde L^2_T B^{\frac{d}{2}}}^2 +  
        2\alpha \| \partial_t \delta \phi \|_{\tilde L^2_T B^{\frac{d}{2}}}^2
        \leq 2\alpha \chi^4 +2 \alpha \chi^2 \leq 4 \alpha \chi^2.
    \end{align*}
    Now let us turn to the estimate of
    \begin{equation*}
    \begin{aligned}
        \epsilon \| &(\bar{\theta} + \delta \theta)(\nabla \partial_t  \phi_L + \nabla \partial_t \delta \phi )\cdot (\nabla \phi_L + \nabla \delta \phi) \|_{L^1_T B^\frac{d}{2}} 
        \\
        &\leq 
        C\epsilon
        \big( 
            \bar{\theta}  + \| \delta \theta \|_{\tilde L^\infty_T B^\frac{d}{2}}
        \big)
        \big(
            \| \partial_t \nabla \phi_L         \|_{\tilde L^2_T B^\frac{d}{2}} + 
            \| \partial_t \nabla \delta \phi    \|_{\tilde L^2_T B^\frac{d}{2}}
        \big)
        \big(
            \| \nabla \phi_L          \|_{\tilde L^2_T B^\frac{d}{2}} + 
            \| \nabla \delta \phi     \|_{\tilde L^2_T B^\frac{d}{2}}
        \big)\\
        &\leq 
        C\epsilon
        ( \bar{\theta} + \chi)
        (\chi^2 + \chi)^2\leq C\epsilon \bar{\theta} \chi^2.
    \end{aligned}
    \end{equation*}
    Recalling the definition of $f_2(\delta \phi,\,\delta \theta)$ in \eqref{def:f2}, we now address the estimate of 
    \begin{equation*}
    \begin{aligned}
        (\bar{\theta}+\delta\theta)\partial_t \bigg[\frac{1}{\epsilon (\bar{\theta}+\delta\theta)^2} \left( \frac{1}{4}(1-|\phi_L + \delta \phi|^2)^2 +{ \frac{(\delta \theta)^3}{3}}(\phi_L + \delta \phi)^2 \right)
-\frac{(\delta \theta)^2}{\epsilon(\bar{\theta}+\delta\theta)}(\phi_L + \delta \phi)^2\bigg] =
    \sum_{n=1}^5\mathcal{J}_n,
    \end{aligned}    
    \end{equation*}
    where each term $\mathcal{J}_n$ is defined through
    \begin{align*}
        \mathcal{J}_1 &:= (\bar{\theta} + \delta \theta)
        \frac{2\delta \theta ((\delta \theta)^2 + 3 \bar{\theta} \delta \theta  + 3\bar{\theta}^2)}{\epsilon \bar{\theta}^3(\bar{\theta} + \delta \theta)^3}
        \partial_t \delta \theta 
        \left( \frac{1}{4}(1-|\phi_L + \delta \phi|^2)^2 +
        { \frac{(\delta \theta)^3}{3}}
        (\phi_L + \delta \phi)^2 \right),\\
        \mathcal{J}_2 &:= 
         -\frac{2(\bar{\theta} + \delta \theta)}
         {\epsilon\bar{\theta}^3}\partial_t \delta \theta 
         \left( \frac{1}{4}(1-|\phi_L + \delta \phi|^2)^2 + 
         { \frac{(\delta \theta)^3}{3}}
         (\phi_L + \delta \phi)^2 \right),\\
         \mathcal{J}_3 &:= 
         \frac{1}{\epsilon (\bar{\theta} + \delta \theta)}
         \Big\{
            (|\phi_L + \delta \phi|^2-1)
            (\phi_L + \delta \phi)(\partial_t \phi_L + \partial_t \delta \phi)
            + \\ & 
            \hspace{1cm}
            +
            {(\delta \theta)^2 } \partial_t  \delta \theta(\phi_L + \delta \phi)^2
            + 
           2{\frac{(\delta \theta)^3}{3}}
            (\phi_L + \delta \phi)(\partial_t \phi_L + \partial_t \delta \phi)
        \Big\},\\
        \mathcal{J}_4 &:=
        -
            \frac{\delta \theta (2\bar{\theta} + \delta \theta) }{2\epsilon (\bar{\theta} + \delta \theta)}
        \partial_t \delta \theta 
        (\phi_L + \delta \phi)^2,\;
        \\
        \mathcal{J}_5&:=
        -2\frac{(\delta \theta)^2}{\epsilon} (\phi_L + \delta \phi)(\partial_t \phi_L + \partial_t \delta \phi).
    \end{align*}
    First, we introduce the function $h(x) = 2x(x^2+3\bar{\theta}x + 3\bar{\theta}^2)/(\bar{\theta} +x)^3$. From inequality \eqref{ineq:composition-besov} and the embedding $B^{d/2} \hookrightarrow L^\infty(\mathbb{R}^d)$, one gathers that
    \begin{equation}
    \begin{aligned}
        \| h\circ \delta \theta \|_{\tilde L^\infty_T B^\frac{d}{2}} 
        &\leq 
        C  \max_{l \in \{0,\dots, [d/2]+1\}}
        \Big(
        \| \delta \theta \|_{L^\infty_TL^\infty}^l \sup_{|x| \leq C\| \delta \theta \|_{L^\infty_TL^\infty}} |h^{(l+1)}(x)|\Big)\| \delta \theta \|_{\tilde L^\infty_T B^\frac{d}{2}}.
    \end{aligned}
    \end{equation}
    Since for any $l\in \mathbb{N}_0$ one has
    $$h^{(l+1)}(x) = (l+1)!(-1)^l \frac{ 18 x^2-12(l-1)x\bar{\theta} +(l-7 )l\bar{\theta}^2}{(x+\bar{\theta})^{4+l}},$$
    we eventually gather that $
        \| h\circ \delta \theta \|_{\tilde L^\infty_T B^\frac{d}{2}} 
        \leq   C\chi . $ Therefore
    \begin{align*}
        \| \mathcal{J}_1 &\|_{L^1_T B^{\frac{d}{2}}}
        \leq \frac{C}{\varepsilon} 
        \big(\bar{\theta} + \| \delta \theta \|_{\tilde L^\infty_T B^\frac{d}{2}}\big)
        \| h\circ \delta \theta \|_{\tilde L^\infty_T B^{\frac{d}{2}}}
        \| \partial_t \delta \theta \|_{L^1_T B^{\frac{d}{2}}} 
        \Big\{
            1 + 
            \| \phi_L \|_{\tilde L^\infty_T B^{\frac{d}{2}}}^4 
            +
            \| \delta \phi      \|_{\tilde L^\infty_T B^{\frac{d}{2}}}^4 +\\ 
            &+  \| \delta \theta \|_{\tilde L^\infty_T B^{\frac{d}{2}}}^3 
            \big(
            \| \phi_L \|_{L^\infty_T B^{\frac{d}{2}}}^2 
            +
            \| \delta \phi      \|_{\tilde L^\infty_T B^{\frac{d}{2}}}^2 
            \big)
        \Big\}
        \leq 
        \frac{C}{\varepsilon} 
        (\bar{\theta}+\chi)\chi^2
        \big\{1+ \| \phi_0 \|_{B^{\frac{d}{2}}}^4 +\chi^4 + 
        \chi^3( \| \phi_0 \|_{B^{\frac{d}{2}}}^2+\chi )\big\}\\
        &\leq 
        C\frac{\bar{\theta}}{\varepsilon}(1+ \| \phi_0 \|_{B^{\frac{d}{2}}}^4)\chi^2
        \leq 
        \frac{C}{\bar{\theta}\varepsilon}(1+ \| \phi_0 \|_{B^{\frac{d}{2}}})^4\bar{\theta}^2\chi^2.
    \end{align*}
    Now let us turn to the estimate of $\mathcal{J}_2$, namely
    \begin{align*}
        \| \mathcal{J}_2 &\|_{L^1_T B^{\frac{d}{2}}}
        \leq 
        \frac{C}{\varepsilon\bar{\theta}^3} 
        \big(\bar{\theta} + \| \delta \theta \|_{\tilde L^\infty_T B^\frac{d}{2}}\big)
        \| \partial_t \delta \theta \|_{L^1_T B^{\frac{d}{2}}} 
         \Big\{
            1 + 
            \| \phi_L \|_{\tilde L^\infty_T B^{\frac{d}{2}}}^4 
            +
            \| \delta \phi      \|_{\tilde L^\infty_T B^{\frac{d}{2}}}^4 +\\ 
            &+  \| \delta \theta \|_{\tilde L^\infty_T B^{\frac{d}{2}}}^3 
            \big(
            \| \phi_L \|_{L^\infty_T B^{\frac{d}{2}}}^2 
            +
            \| \delta \phi      \|_{\tilde L^\infty_T B^{\frac{d}{2}}}^2 
            \big)
        \Big\}
        \leq 
        \frac{C}{\varepsilon\bar{\theta}^2}
        \Big(1+ \| \phi_0 \|_{B^{\frac{d}{2}}}^4\Big) \chi^2
        \leq 
        \frac{C}{\varepsilon\bar{\theta}}
        \Big(1+ \| \phi_0 \|_{B^{\frac{d}{2}}}\Big)^4 \chi^2,
    \end{align*}
    where in the last inequality we have used the fact that $\bar{\theta}\geq 1$. 
    The estimate of $\mathcal{J}_3$ is performed as follows
    \begin{align*}
         \| \mathcal{J}_3 \|_{L^1_T B^{\frac{d}{2}}}
         \leq 
         \frac{C}{\epsilon}
         \Big(
            \frac{1}{\bar{\theta}} + 
            \left\|
                \frac{\delta \theta }{\bar{\theta}+ \delta \theta} 
            \right\|_{\tilde L^\infty_T B^{\frac{d}{2}}}
        \Big)
        \Big\{
            \Big(
                \|          \phi_L  \|_{\tilde L^\infty_t B^{\frac{d}{2}}}^2 + 
                \| \delta   \phi    \|_{\tilde L^\infty_t B^{\frac{d}{2}}}^2 + 1
            \Big)
            \Big(
                \|                      \phi_L  \|_{\tilde L^\infty_T B^{\frac{d}{2}}}
               +
                \|              \delta  \phi    \|_{\tilde L^\infty_T B^{\frac{d}{2}}} 
            \Big)\cdot 
            \\
            \cdot 
            \Big(
                  \|  \partial_t          \phi_L      \|_{L^1_T B^{\frac{d}{2}}} 
              +
                \|  \partial_t  \delta  \phi        \|_{L^1_T B^{\frac{d}{2}}} 
            \Big)
            +
            \| \delta \theta \|_{\tilde L^\infty_T B^\frac{d}{2}}^2
            \| \partial_t \delta \theta \|_{L^1_T B^\frac{d}{2}}
            \Big(
                \|          \phi_L  \|_{\tilde L^\infty_T B^{\frac{d}{2}}}^2 + 
                \| \delta   \phi    \|_{\tilde L^\infty_T B^{\frac{d}{2}}}^2 
            \Big)
            + \\ +
            \| \delta \theta    \|_{\tilde L^\infty_T B^{\frac{d}{2}}}^3 
            \Big(
                \|          \phi_L  \|_{\tilde L^\infty_T B^{\frac{d}{2}}} + 
                \| \delta   \phi    \|_{\tilde L^\infty_T B^{\frac{d}{2}}} 
            \Big)
            \Big(
                \|  \partial_t          \phi_L     \|_{L^1_T B^{\frac{d}{2}}} +
                \|  \partial_t  \delta  \phi        \|_{L^1_T B^{\frac{d}{2}}}
            \Big)\Big\},
    \end{align*}
    from which we deduce
    \begin{align*}
        \| \mathcal{J}_3 \|_{L^1_T B^{\frac{d}{2}}}
        &\leq 
        \frac{C}{\epsilon}
        \Big(
            \frac{1}{\bar{\theta}} + 
            \chi
        \Big)
        \Big\{
            \big(
                \| \phi_0 \|_{B^{\frac{d}{2}}}^2 + \chi^2 + 1
            \big)
            \big(
                \| \phi_0 \|_{B^{\frac{d}{2}}} + \chi
            \big)
            \big( \chi^2 + \chi \big) 
            + \\ 
            &+ 
            \chi 
            \big(
                \| \phi_0 \|_{B^{\frac{d}{2}}}^2 + \chi^2
            \big)
            +
            \chi^3 
            \big(
                \| \phi_0 \|_{B^{\frac{d}{2}}} + \chi
            \big)
            \big(
               \chi + \chi^2
            \big)\\
            &\leq 
            C
            \Big(
                \frac{1}{\epsilon \bar{\theta}} + 
                \frac{\chi}{\epsilon}
            \Big)
            \Big(
                \| \phi_0 \|_{B^{\frac{d}{2}}} +1 
            \Big)^4
            \chi.
    \end{align*}
    Let us now turn our attention to $\mathcal{J}_4$
    \begin{align*}
         \| \mathcal{J}_4 \|_{L^1_T B^{\frac{d}{2}}} 
         &\leq C 
         \Big\| 
            \frac{\delta \theta (\bar{\theta} +2 \delta \theta) }{2\epsilon (\bar{\theta} + \delta \theta)}
         \Big\|_{\tilde L^\infty_T B^\frac{d}{2}}
         \| \partial_t \delta \theta\|_{L^1_T B^\frac{d}{2}} 
         \Big(
            \|      \phi_L  \|_{\tilde L^\infty_T B^\frac{d}{2}}^2 + 
            \| \delta \phi  \|_{\tilde L^\infty_T B^\frac{d}{2}}^2 
         \Big)\\
         &\leq 
         \frac{C}{\epsilon}
         \big(
            \| \phi_0 \|_{B^{\frac{d}{2}}}^2 + 
            1
         \big)
         \chi^2,
    \end{align*}
    while for $\mathcal{J}_5$ we get
    \begin{align*}
         \| \mathcal{J}_5 \|_{L^1_T B^{\frac{d}{2}}} 
         &\leq 
         \frac{C}{\epsilon \bar{\theta}}
         \|     \delta \theta       \|_{\tilde L^\infty_T B^\frac{d}{2}}^2 
         \Big(
            \|      \phi_L                  \|_{\tilde L^\infty_T B^\frac{d}{2}} + 
            \| \delta \phi                  \|_{\tilde L^\infty_T B^\frac{d}{2}} 
         \Big)
         \Big(
            \|  \partial_t   \phi_L         \|_{L^1_T B^\frac{d}{2}} + 
            \|  \partial_t  \delta \phi     \|_{L^1_T B^\frac{d}{2}} 
         \Big)\\
         &\leq 
         \frac{C}{\bar{\theta}\epsilon}
         \big(
            \| \phi_0 \|_{B^{\frac{d}{2}+2}} + 
            1
         \big)^4
         \chi^3.
    \end{align*}
    We shall now address the last term of $f_2(\delta \phi,\delta \theta)$ in \eqref{def:f2}, namely
    \begin{equation*}
        \left|
            \alpha\nabla\partial_t \phi -
            \nabla 
            \left(  
                \epsilon (\bar{\theta} + \delta \theta )(\Delta \phi_L + \Delta \delta\phi)  - 
                \frac{1}{\epsilon(\bar{\theta}+\delta\theta )}( (\phi_L + \delta \phi)^2-1)(\phi_L + \delta \phi) 
                + 
                \frac 23\frac{(\delta \theta)^3}{\epsilon(\bar{\theta}+\delta\theta )} 
                (\phi_L+ \delta \phi) 
            \right)
        \right|^2.
    \end{equation*}
    We begin with the first term $\alpha \nabla \partial_t  \phi = \alpha \nabla \partial_t \phi_L +\alpha \nabla \partial_t \delta \phi $. Taking the square from the absolute value, 
    \begin{equation*}
        \| |\alpha\nabla (\partial_t \phi_L + \partial_t \delta \phi)|^2 \|_{L^1_T B^\frac{d}{2}}
        \leq 
        C\alpha^2 
        \Big(
            \| \nabla  \partial_t \phi_L          \|_{\tilde L^2_T B^\frac{d}{2}}^2+
            \| \nabla  \partial_t \delta \phi     \|_{\tilde L^2_T B^\frac{d}{2}}^2
        \Big)
    \end{equation*}
    which implies that
    \begin{equation*}
        \| |\alpha\nabla (\partial_t \phi_L + \partial_t \delta \phi)|^2 \|_{L^1_T B^\frac{d}{2}}
        \leq 
        C
        \alpha^2 
        \Big(
            \chi^4+
            \chi^2
        \Big)
        \leq 
        C\alpha^2 \chi^2.
    \end{equation*}
    Next, we handle
    \begin{align*}
        \|  |\epsilon \nabla \delta \theta (\Delta \phi_L + \Delta \delta \phi)|^2 \|_{L^1_T B^\frac{d}{2}}
        &\leq 
        C\epsilon^2 
        \| \nabla \delta \theta \|_{\tilde L^2_T B^\frac{d}{2}}^2
        \Big(
            \| \Delta \phi_L        \|_{\tilde L^2_T B^\frac{d}{2}}^2+
            \| \Delta \delta \phi   \|_{\tilde L^2_T B^\frac{d}{2}}^2
        \Big) \leq 
        C
        \epsilon^2 
        \chi^4.
    \end{align*}
    Next, we address the following term
    \begin{align*}
        \|  |\epsilon \delta \theta (\nabla \Delta \phi_L + \nabla \Delta \delta \phi)|^2 \|_{L^1_T B^\frac{d}{2}}
        &\leq 
        C\epsilon^2 
        \|   \delta \theta \|_{\tilde L^\infty_T B^\frac{d}{2}}^2
        \Big(
            \| \nabla \Delta \phi_L        \|_{L^1_T B^\frac{d}{2}}^2+
            \| \nabla \Delta \delta \phi   \|_{L^1_T B^\frac{d}{2}}^2
        \Big) \leq 
        C
        \epsilon^2 
        \chi^4.
    \end{align*}
    Furthermore
    \begin{align*}
        \Big\|
        \Big|
            \frac{\nabla \delta \theta }{\epsilon(\bar{\theta}+ \delta \theta)^2}
            &( (\phi_L + \delta \phi)^2-1)(\phi_L + \delta \phi)
        \Big|^2
        \Big\|_{L^1_T B^\frac{d}{2}}
        \leq 
        C\frac{1}{\epsilon^2 \bar{\theta}^4}
        \Big(
            1 +  
            \left\|
                \frac{\delta \theta (\bar{\theta}+2\delta \theta ) }{\bar{\theta}^2(\bar{\theta} + \delta \theta)^2}
            \right\|^2_{\tilde L^\infty_T B^\frac{d}{2}}
        \Big)
        \cdot \\ 
        &\cdot 
        \| \nabla \delta \theta \|_{\tilde L^2_T B^\frac{d}{2}}^2
        \Big(
            \| \phi_L  \|_{\tilde L^\infty_T B^\frac{d}{2}}^2 +
            \| \delta \phi  \|_{\tilde L^\infty_T B^\frac{d}{2}}^2 +
            1
        \Big)
        \Big(
            \| \phi_L  \|_{\tilde L^\infty_T B^\frac{d}{2}}^2 +
            \| \delta \phi   \|_{\tilde L^\infty_T B^\frac{d}{2}}^2
        \Big)\\
        &\leq 
        \frac{C}{\epsilon \bar{\theta}}
        \Big( 
            1 +
            \chi^2
        \Big)
        \chi^2
        \Big(
           \| \phi_0 \|_{B^\frac{d}{2}}^2+\chi^2 + 1 
        \Big)
        \Big(
           \| \phi_0 \|_{B^\frac{d}{2}}^2+\chi^2 
        \Big)\\
        &\leq 
        \frac{C}{\bar{\theta} }\max\Big\{ 1, \frac{1}{\epsilon}\Big\}
        \Big(
           \| \phi_0 \|_{B^\frac{d}{2}}+1 
        \Big)^4
        \chi^2,
    \end{align*}
    and
    \begin{align*}
        \Big\|
        \Big|
            &\Big(
                \frac{1}{\epsilon \bar{\theta}} - 
                \frac{\delta \theta }{\epsilon \bar{\theta}(\bar{\theta}+ \delta \theta)}
            \Big)
            \Big( 
                3(\phi_L + \delta \phi)^2 -1 
            \Big)
            \Big(
                \nabla \phi_L + \nabla \delta \phi 
            \Big)
        \Big|^2
        \Big\|_{L^1_T B^\frac{d}{2}}
        \\
        &
        \leq 
        C
        \Big(
          \frac{1}{\epsilon^2 \bar{\theta}^2} + 
          \Big\| 
             \frac{\delta \theta }{\epsilon \bar{\theta}(\bar{\theta}+ \delta \theta)}
          \Big\|_{\tilde L^\infty_T B^\frac{d}{2}}^2
        \Big)
        \Big(
            \|  \phi_L          \|_{\tilde L^\infty_T B^\frac{d}{2}}^2 + 
            \| \delta   \phi    \|_{\tilde L^\infty_T B^\frac{d}{2}}^2 +
            1
        \Big)
        \Big(
            \| \nabla   \phi_L       \|_{\tilde L^2_T B^\frac{d}{2}}^2 + 
            \| \nabla \delta \phi    \|_{\tilde L^2_T B^\frac{d}{2}}^2
        \Big) \\
        &\leq 
        C
        \frac{1}{\epsilon^2 \bar{\theta}^2} 
        \big(
            1+\chi \epsilon^2\bar{\theta}^2
        \big)
        \Big( 
        \| \phi_0 \|_{B^{\frac{d}{2}}}^2 + 
        \chi^2 +
        1
        \Big)\chi^2
        \leq 
        \frac{C}{\bar{\theta} }\max\Big\{ 1, \frac{1}{\epsilon}\Big\}
        \Big(
           \| \phi_0 \|_{B^\frac{d}{2}}+1 
        \Big)^4
        \big(
            1+\chi \epsilon^2\bar{\theta}^2
        \big)
        \chi^2.
    \end{align*}
    Finally, we decompose the remaining term into
    \begin{align*}
            \nabla 
            \Big(
                \frac 23 \frac{(\delta \theta)^3}{\epsilon(\bar{\theta}+\delta \theta)}
                (\phi_L+ \delta \phi)
            \Big) 
            &= 
            \frac{2(\delta \theta)^2(2\delta \theta + 3\bar{\theta})}{3\epsilon (\delta \theta+ \bar{\theta})^2}
            \nabla \delta \theta (\phi_L + \delta \phi) + 
            \frac 23\frac{ (\delta \theta)^3}{\epsilon(\bar{\theta}+\delta \theta)}
            (\nabla \phi_L + \nabla \delta \phi)\\
            &= 
            \frac{2}{3\epsilon} 
            \delta \theta
            \frac{\delta \theta }{\bar{\theta}+\delta \theta}
            \Big(
             3-\frac{\delta \theta}{\bar{\theta} + \delta \theta}
            \Big)
            \nabla \delta \theta (\phi_L + \delta \phi) + 
            \frac{2}{3\epsilon} (\delta \theta)^2\frac{ \delta \theta}{ \bar{\theta}+\delta \theta}
            (\nabla \phi_L + \nabla \delta \phi),
    \end{align*}
    thus, recalling \eqref{estimate:comp_h_deltatheta2}, we gather
    \begin{align*}
        \Big\|
            \Big|
                &\nabla 
                \Big(
                     \frac 23 \frac{(\delta \theta)^3}{\epsilon(\bar{\theta}+\delta \theta)}
                    (\phi_L+ \delta \phi)
                \Big)
            \Big|^2
        \Big\|_{L^1_T B^\frac{d}{2}}
        \leq 
        \frac{C}{\epsilon}
        \Big\{
        \| \delta \theta \|_{\tilde L^\infty_T B^\frac{d}{2}}^2
        \Big\| \frac{\delta \theta}{\bar{\theta} + \delta \theta} \Big\|_{\tilde L^\infty_T B^\frac{d}{2}}^2
        \Big(
            3+
            \Big\| \frac{\delta \theta}{\bar{\theta} + \delta \theta} \Big\|_{\tilde L^\infty_T B^\frac{d}{2}}
        \Big)^2\cdot \\
        &\cdot 
        \| \nabla \delta \theta \|_{\tilde L^2_T B^\frac{d}{2}}^2
        \Big(
            \| \phi_L       \|_{\tilde L^\infty_T B^\frac{d}{2}}^2+
            \| \delta \phi  \|_{\tilde L^\infty_T B^\frac{d}{2}}^2
        \Big) +
       \| \delta \theta \|_{\tilde L^\infty_T B^\frac{d}{2}}^4
        \Big\| \frac{\delta \theta}{\bar{\theta} + \delta \theta} 
        \Big\|_{\tilde L^\infty_T B^\frac{d}{2}}^2
        \Big(
            \| \nabla \phi_L       \|_{\tilde L^2_T B^\frac{d}{2}}^2+
            \| \nabla \delta \phi  \|_{\tilde L^2_T B^\frac{d}{2}}^2
        \Big)
        \Big\}\\
        &\leq 
        \frac{C}{\epsilon} 
        \Big\{
        \chi^6
        ( \|\phi_0 \|_{B^{\frac{d}{2}}}^2 + \chi^2 ) + 
           \chi^6(\chi^4+\chi^2)
        \Big\} \leq 
        \frac{C}{\epsilon\bar{\theta}} 
        \Big(\|\phi_0 \|_{B^{\frac{d}{2}}}+ 1 \Big)^4
         \bar{\theta} 
        \chi^2
        \leq 
        \frac{C}{\bar{\theta} }\max\Big\{ 1, \frac{1}{\epsilon}\Big\}
        \Big(
           \| \phi_0 \|_{B^\frac{d}{2}}+1 
        \Big)^4
        \bar{\theta} \chi^2.
    \end{align*}
    This concludes the proof of the lemma.
\end{proof}

\subsection{Second part of the proof of Theorem \ref{thm:well-posedness}}\label{section:second-part-of-the-proof}

This section concludes the proof of Theorem \ref{thm:well-posedness}. We show that the operator $\mathcal{L}:\mathcal{K}_{\chi, T}\to \mathcal{K}_{\chi, T}$ defined in \eqref{tilde-system} is a contraction, as long as the parameter $\varepsilon_0$ in the smallness condition of \eqref{smallness-condition}  and the perturbation $\chi>0$ in \eqref{def:KchiT} are considered sufficiently small (cf.~relations \eqref{range-of-epsilon0} and \eqref{range-of-chi}). 

\noindent
To better understand this smallness relation, we recall that  $(\delta \tilde \phi,\delta  \tilde \theta)=\mathcal{L}(\delta \phi, \delta \theta)$ is the unique solution of problem \eqref{tilde-system}. Thus we can apply the a-priori estimates of Lemma \ref{lemma:a-priori-estimates-for-phi} and Lemma \ref{lemma:a-priori-estimates-for-theta}, to gather
\begin{equation}\label{eq:first-estimate-contraction}
\begin{aligned}
    \| &\mathcal{L}(\delta \phi_1, \delta \theta_1) 
    - \mathcal{L}(\delta \phi_2, \delta \theta_2) \|_{\mathcal{K}} \leq 
    \tilde C 
    \bigg\{
    \max \Big\{1, \frac{1}{\alpha}   \Big\}
    \Big(
        \|  f_1(\delta \phi_1, \delta \theta_1) -  f_1(\delta \phi_2, \delta \theta_2) \|_{L^1_T B^\frac{d}{2}} + \\
     &\quad
+   \|  f_1(\delta \phi_1, \delta \theta_1) -  f_1(\delta \phi_2, \delta \theta_2) \|_{\tilde L^2_T B^{\frac{d}{2}-1}} 
\Big)
    +
      \max \Big\{\frac{1}{\kappa_B} , \frac{1}{\kappa}\Big\}
     \|  f_2(\delta \phi_1, \delta \theta_1) -  f_2(\delta \phi_2, \delta \theta_2) \|_{L^1_T B^\frac{d}{2}}
     \bigg\},
\end{aligned}
\end{equation}
for any couple $(\delta \phi_1,\delta \theta_1)$ and $(\delta \phi_2,\delta \theta_2)$ in $\mathcal{K}_{\chi, T}$. 
Here the constant $\tilde C>0$ depends only on the dimension and it coincides with the one of \eqref{est:first-basic-estimate-of-deltaphi-deltatheta}. The aim of this section is to prove the following proposition, which unlocks some estimates of the terms related to $f_1$ and $f_2$ on the right-hand side of \eqref{eq:first-estimate-contraction}.
\begin{prop}\label{prop:contraction}
Assume that the hypotheses of Theorem \ref{thm:well-posedness} and Lemma \ref{lemma:f1-estimate} as well as the relations \eqref{range-of-epsilon0} and \eqref{range-of-chi} are satisfied. The following inequalities hold true for any couple $(\delta \phi_1, \delta \theta_1)$ and $(\delta \phi_2, \delta \theta_2)$ in $\mathcal{K}_{\chi, T}$:
    \begin{align*}
        (i&) \;  \|  f_1(\delta \phi_1, \delta \theta_1) -  f_1(\delta \phi_2, \delta \theta_2) \|_{L^1_T B^\frac{d}{2}} 
        +\|  f_1(\delta \phi_1, \delta \theta_1) -  f_1(\delta \phi_2, \delta \theta_2) \|_{\tilde L^2_T B^{\frac{d}{2}-1}} 
        \\
        & \hspace{2cm}
        \leq R_1
        \Big(
            \epsilon (\|\Delta \phi_0 \|_{B^\frac{d}{2}}+\chi) + \frac{1}{\bar \theta}\max\Big\{1,\frac{1}{\epsilon}\Big\} (1+\|\phi_0\|_{B^\frac{d}{2}})^4
        \Big)   \| (\delta \phi_1,\delta \theta_1)- (\delta \phi_2,\delta \theta_2) \|_{\mathcal{K}},\\
        (ii&) \;  \|  f_2(\delta \phi_1, \delta \theta_1) -  f_2(\delta \phi_2, \delta \theta_2) \|_{L^1_T B^\frac{d}{2}} 
        \\
        & \hspace{0.5cm}
       \leq R_2
        \Big(
              \frac{1}{\bar \theta}\max\Big\{1,\frac{1}{\epsilon}\Big\} (1+\|\phi_0\|_{B^\frac{d}{2}})^4+(\epsilon\bar{\theta} + \alpha +\alpha^2)\chi
        \Big) \| (\delta \phi_1,\delta \theta_1)- (\delta \phi_2,\delta \theta_2) \|_{\mathcal{K}},
\end{align*}
for some constants $R_1$ and $R_2$ which depend only on the dimension $d\geq 1$.     
\end{prop}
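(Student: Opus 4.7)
The argument will closely parallel the proof of Lemma \ref{lemma:f1-estimate}: every product (or composition) decomposition used there to estimate the norms of $f_1$ and $f_2$ can be upgraded to a Lipschitz estimate by means of the telescoping identities
\begin{equation*}
    A_1 B_1 - A_2 B_2 = (A_1 - A_2) B_1 + A_2 (B_1 - B_2), \qquad
    |A_1|^2 - |A_2|^2 = (A_1 - A_2)\cdot (A_1 + A_2),
\end{equation*}
together with the integral representation for smooth functions of $\delta \theta$:
\begin{equation*}
    H(\delta \theta_1) - H(\delta \theta_2) = (\delta \theta_1 - \delta \theta_2) \int_0^1 H'\bigl(\delta \theta_2 + \tau (\delta \theta_1 - \delta \theta_2)\bigr)\, d\tau.
\end{equation*}
Each of these identities factorizes a difference of quadratic or higher order into the product of an explicit difference (which will be absorbed into $\|(\delta \phi_1, \delta \theta_1) - (\delta \phi_2, \delta \theta_2)\|_{\mathcal{K}}$) and a residual factor controlled exactly as in Lemma \ref{lemma:f1-estimate}.

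For part $(i)$, I would first split $\epsilon \Delta(\delta \theta \, \Delta(\phi_L + \delta \phi))$ into one piece involving $\Delta(\delta \theta_1 - \delta \theta_2)$ paired with $\Delta(\phi_L + \delta \phi_1)$, which produces the $\epsilon(\|\Delta \phi_0\|_{B^{d/2}} + \chi)$ contribution to $R_1$, and one piece involving $\delta \theta_2$ paired with $\Delta(\delta \phi_1 - \delta \phi_2)$. The remaining terms $\mathcal{I}_1, \dots, \mathcal{I}_8$ in \eqref{eq:def-I1toI6}--\eqref{def-of-all-I} are treated analogously: in each of them the first telescoping identity isolates exactly one factor of the form $\delta \theta_1 - \delta \theta_2$, $\nabla(\delta \theta_1 - \delta \theta_2)$, $\Delta(\delta \theta_1 - \delta \theta_2)$ or $\delta \phi_1 - \delta \phi_2$ (and its derivatives), to be controlled by the $\mathcal{K}$-norm of the difference; the remaining factors are estimated by the same product inequalities \eqref{product-between-chemin-lerner} used in the proof of Lemma \ref{lemma:f1-estimate}. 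The composition bound \eqref{estimate:comp_h_deltatheta} for $h\circ \delta \theta$ is upgraded to a Lipschitz bound by applying Lemma \ref{lemma:action_of_smooth_function_on_B} to the integrand of the mean-value representation above, using that the smallness $\chi < 1$ keeps $\bar \theta + \delta \theta_2 + \tau(\delta \theta_1 - \delta \theta_2)$ uniformly bounded away from zero.

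Part $(ii)$ is handled by the same scheme applied term by term to \eqref{def:f2}. The damping square $\alpha(\partial_t \phi_L + \partial_t \delta \phi)^2$ factorizes via the second telescoping identity to give the $\alpha \chi$ contribution in $R_2$; the gradient product $\epsilon(\bar \theta + \delta \theta)\, \partial_t \nabla \phi \cdot \nabla \phi$ gives $\epsilon \bar \theta\, \chi$; the long bracketed term decomposes into Lipschitz counterparts of $\mathcal{J}_1,\dots,\mathcal{J}_5$ from the proof of Lemma \ref{lemma:f1-estimate}, part (b); and the final square $|\alpha \nabla \partial_t \phi - \nabla \mu(\phi, \theta)|^2$ is split by writing $|A_1|^2 - |A_2|^2 = (A_1 - A_2)\cdot (A_1 + A_2)$, so that Lipschitz control of each nonlinearity inside the bracket suffices, the $\alpha^2$ factor arising precisely from the $\alpha \nabla \partial_t \phi$ component.

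The main obstacle is purely bookkeeping: the expansion of each difference produces a significantly larger number of terms than in Lemma \ref{lemma:f1-estimate}, and for each of them one must carefully verify that the factor of $\chi$ appearing in the corresponding bound of Lemma \ref{lemma:f1-estimate} is entirely absorbed into the Lipschitz factor $\|(\delta \phi_1, \delta \theta_1) - (\delta \phi_2, \delta \theta_2)\|_{\mathcal{K}}$, leaving exactly the prefactors claimed in $R_1$ and $R_2$. A subtler point is the uniform (in $\tau$) control of the integrand $H'(\delta \theta_2 + \tau(\delta \theta_1 - \delta \theta_2))$ in $\tilde L^\infty_T B^{d/2}$, which relies on the algebra structure of $B^{d/2}$ and on $\chi$ being small enough that the denominators of all rational expressions in $\delta \theta$ stay uniformly positive. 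Once both inequalities are established, plugging them into \eqref{eq:first-estimate-contraction} and invoking the smallness \eqref{range-of-epsilon0}--\eqref{range-of-chi} yields a contraction constant $L_\chi < 1$ in \eqref{relation-recursive-sequence}, completing the fixed-point argument that proves Theorem \ref{thm:well-posedness}.
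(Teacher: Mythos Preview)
Your plan is correct and matches the paper's strategy: telescoping products, isolating one difference factor to absorb into $\|(\delta\phi_1,\delta\theta_1)-(\delta\phi_2,\delta\theta_2)\|_{\mathcal{K}}$, and bounding the remaining factors exactly as in Lemma~\ref{lemma:f1-estimate}. The paper does streamline two of your steps. First, rather than telescoping each of the eight pieces $\mathcal{I}_1,\dots,\mathcal{I}_8$ from \eqref{def-of-all-I}, it writes the difference of $f_1$ directly as three coarser blocks $\mathcal{I}$, $\mathcal{I}\mathcal{I}$, $\mathcal{I}\mathcal{I}\mathcal{I}$ (one for each summand of \eqref{def:f1-function}) and telescopes only at that level, which keeps the bookkeeping shorter. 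Second, for the rational expressions in $\theta$ it avoids your integral mean-value representation altogether by using the explicit algebraic identity $\tfrac{1}{\theta_1}-\tfrac{1}{\theta_2}=\tfrac{\theta_2-\theta_1}{\theta_1\theta_2}$ and then splitting $\tfrac{1}{\theta_1\theta_2}$ via the same trick used in \eqref{estimate:comp_h_deltatheta2}; this sidesteps the ``subtler point'' you flag about uniform-in-$\tau$ control of $H'(\delta\theta_2+\tau(\delta\theta_1-\delta\theta_2))$ in $\tilde L^\infty_T B^{d/2}$. Your route works too, but the paper's choices make the argument shorter.
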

\noindent
Without loss of generality, we can assume that the constant $R_1$ and $R_2$ coincide with the ones of Lemma \ref{lemma:f1-estimate}. Indeed, they depend only on the dimension $d\geq 1$, therefore one can take the maximum of all of them.

\noindent 
By coupling Proposition \ref{prop:contraction} and the assumptions on $\varepsilon_0$ and $\chi$ (cf. \eqref{range-of-epsilon0} and \eqref{range-of-chi}) we obtain that $\mathcal{L}$ is a contraction, i.e.:
\begin{equation}
    \begin{aligned}
        \| \mathcal{L}(\delta \phi_1, \delta \theta_1) - \mathcal{L}(\delta \phi_2, \delta \theta_2) \|_{\mathcal{K}}\leq \frac{1}{2}  \| (\delta \phi_1,\delta \theta_1)- (\delta \phi_2,\delta \theta_2) \|_{\mathcal{K}}.
    \end{aligned}
\end{equation}
Thus, to conclude our procedure, it remains to prove Proposition \ref{prop:contraction}. This property concludes the proof of Theorem \ref{thm:well-posedness}. Indeed there exists a fixed point $(\delta \phi, \delta \theta)\in \mathcal{K}_{\chi, T}$ for $\mathcal{L}$, more precisely $(\delta \phi, \delta \theta)$ is solution of our original system \eqref{eq:phi-to-analyse}--\eqref{eq:theta-to-analyse}.

\begin{proof}[Proof of  Proposition \ref{prop:contraction}]
In this proof, we limit ourselves to the norm $L^1_TB^{d/2}$ of case $(i)$. 
The estimate of the norm $L^2_TB^{d/2}$ can be derived analogously. The case $(ii)$ can be derived with similar techniques as the ones used in Lemma \ref{lemma:f1-estimate}, combined with the following line of arguments.

\noindent
We  turn our attention to the estimate of each term on $(i)$. 
We reintroduce the compact notation $\phi = \phi_L + \delta \phi$ and $\theta = \bar{\theta}+\delta \theta$. Hence we remark that $\delta \phi_1 -\delta \phi_2 = \phi_1-\phi_2$, as well as $\delta \theta_1 -\delta \theta_2 = \theta_1-\theta_2$. We now recall that $f_1$ in \eqref{def:f1-function} stands for
\begin{equation*}
\begin{aligned}
    f_1(\delta \phi, \delta \theta) &=  
    - \epsilon \Delta (\delta \theta \Delta \phi)  +  
    \Delta 
    \Big( 
        \frac{1}{\epsilon \theta  }
        \Big( 
            (\phi^2 -1 )\phi + 
            \frac{(\delta \theta)^3}{3} \phi
        \Big)
    \Big).
\end{aligned}
\end{equation*}
We can hence split the difference in $(i)$ by means of
\begin{equation}\label{eq:prop-contraction-def-I-II-III}
\begin{aligned}
    f_1(&\delta \phi_1, \delta \theta_1) -f_1(\delta \phi_2, \delta \theta_2)
    = 
    \\
    &=
    \underbrace{-\epsilon \Delta (\delta \theta_1 \Delta \phi_1 ) +  \epsilon \Delta (\delta \theta_2 \Delta \phi_2)}_{=:\mathcal{I}}+
    \underbrace{
    \Delta 
    \Big( 
        \frac{1}{\epsilon \theta_1 }
            (\phi_1^2\!\!-1)\phi_1
    \Big)-
    \Delta 
    \Big( 
        \frac{1}{\epsilon \theta_2 }
            (\phi_2^2\!\!-1)\phi_2
    \Big)}_{=:\mathcal{I}\mathcal{I}} 
    +
    \underbrace{
    \Delta \Big(
    \frac{(\delta \theta_1)^3 \phi_1}{3\epsilon \theta_1 }
     -
     \frac{(\delta \theta_2)^3 \phi_2}{3\epsilon \theta_2 }
    \Big)}_{=:\mathcal{I}\mathcal{I}\mathcal{I}}.
\end{aligned}
\end{equation}
We thus aim to show that each component $\mathcal{I}$, $\mathcal{I}\mathcal{I}$ and $\mathcal{I}\mathcal{I}\mathcal{I}$ is bounded by
\begin{equation}\label{est-prop-final-I-II-III-new}
\begin{aligned}
    \| \mathcal{I}                          \|_{L^1_T B^\frac{d}{2}}
    &\leq C\epsilon
    \Big( \| \Delta \phi_0 \|_{B^{\frac{d}{2}}} + \chi \Big)
    \| (\delta \phi_1,\delta \theta_1)- (\delta \phi_2,\delta \theta_2) \|_{\mathcal{K}},\\
    \| \mathcal{I} \mathcal{I}              \|_{L^1_T B^\frac{d}{2}}
    &\leq 
    \frac{C}{\bar{\theta}}
    \max
    \Big\{
        1,\frac{1}{\epsilon}
    \Big\}
    \big(1+ \|\phi_0 \|_{B^{\frac{d}{2}}}\big)^4
   \|(\delta \phi_1,\delta \theta_1) -(\delta \phi_2,\delta \theta_2)\|_{\mathcal{K}},\\
    \| \mathcal{I} \mathcal{I} \mathcal{I}  \|_{L^1_T B^\frac{d}{2}} 
    &\leq 
    \frac{C}{\bar{\theta}}
    \max
    \Big\{
        1,\frac{1}{\epsilon}
    \Big\}
    \big(1+ \|\phi_0 \|_{B^{\frac{d}{2}}}\big)^4
   \|(\delta \phi_1,\delta \theta_1) -(\delta \phi_2,\delta \theta_2)\|_{\mathcal{K}},
\end{aligned}
\end{equation}
for a suitable constant $C>0$ which depends only on the dimension $d\geq 1$. 
We shall first bound each term $\mathcal{I}$, $\mathcal{I}\mathcal{I}$ and $\mathcal{I}\mathcal{I}\mathcal{I}$, with respect to the topology of $L^1_T B^\frac{d}{2}$. In what follows we only address the first and second inequalities in \eqref{est-prop-final-I-II-III-new}, since the estimate on $\mathcal{I}\mathcal{I}\mathcal{I}$ can be achieved with an analogous procedure as for $\mathcal{I}\mathcal{I}$.

\noindent 
The first term we handle is
\begin{align*}
    \| \mathcal{I} \|_{L^1_T B^\frac{d}{2}}
    &\leq 
    \epsilon
    \|   \Delta ((\delta \theta_1- \delta \theta_2) \Delta (\phi_L \!+\!\delta \phi_1)) 
    \|_{L^1_T B^\frac{d}{2}} + 
    \epsilon 
    \|\Delta (\delta \theta_2 \Delta (\delta \phi_1 - \delta \phi_2))\|_{L^1_T B^\frac{d}{2}} =: \mathcal{I}_1+\mathcal{I}_2.
\end{align*}
We control $\mathcal{I}_1$ by means of
\begin{align*}
    \mathcal{I}_1&\leq 
    C\epsilon\bigg\{
    \| \Delta (\delta \theta_1- \delta \theta_2) \|_{L^1_T B^\frac{d}{2}}
    \Big(
        \| \Delta \phi_L        \|_{\tilde L^\infty_T B^\frac{d}{2}}+
         \| \Delta \delta \phi_1   \|_{\tilde L^\infty_T B^\frac{d}{2}}
    \Big)+
    \| \nabla  (\delta \theta_1- \delta \theta_2) \|_{\tilde L^2_T B^\frac{d}{2}}
    \Big(
        \| \nabla \Delta \phi_L        \|_{\tilde L^2_T B^\frac{d}{2}}
    +\\&\hspace{1cm}+
        \| \nabla \Delta \delta \phi_1   \|_{\tilde L^2_T B^\frac{d}{2}}
    \Big)+
    \| \delta \theta_1- \delta \theta_2 \|_{\tilde L^\infty_T B^\frac{d}{2}}
    \Big(
        \|  \Delta^2 \phi_L        \|_{L^1_T B^\frac{d}{2}}
        +
        \|  \Delta^2 \delta \phi_1 \|_{L^1_T B^\frac{d}{2}}
    \Big)\bigg\}.
\end{align*}
We now remark that $\| \Delta \phi_L  \|_{\tilde L^\infty_T B^{d/2}} \leq \| \Delta \phi_0  \|_{B^{d/2}}$ for any time $T>0$. Moreover, we assume that $T\in [0,T_\chi)$, where $T_\chi$ is defined through the relation
$ \| \nabla \Delta \phi_L  \|_{\tilde L^2(0,{T_\chi}; B^{d/2})}+
\|  \Delta^2 \phi_L  \|_{ L^1(0,{T_\chi}; B^{d/2})}\leq \chi^2$. 
Since $(\delta \phi_1,\, \delta \theta_1)\in \mathcal{K}_{\chi, T}$
\begin{equation*}
    \mathcal{I}_1
    \leq C\epsilon \| (\delta \phi_1,\delta \theta_1)- (\delta \phi_2,\delta \theta_2) \|_{\mathcal{K}} 
    \Big( \| \Delta \phi_0 \|_{B^{\frac{d}{2}}} + \chi + \chi^2\Big).
\end{equation*}
With an analogous procedure, we deal with $\mathcal{I}_2$ as follows
\begin{align*}
    \mathcal{I}_2&\leq 
    \epsilon\bigg\{
    \| \Delta \delta \theta_2                   \|_{L^1_T B^\frac{d}{2}}
    \| \Delta (\delta \phi_1 - \delta \phi_2)   \|_{\tilde L^\infty_TB^\frac{d}{2}} 
    +
    \| \nabla \delta \theta_2                           
    \|_{\tilde L^2_T B^\frac{d}{2}} 
    \| \nabla \Delta (\delta \phi_1 - \delta \phi_2) 
    \|_{\tilde L^2_T B^\frac{d}{2}}
    +\\
    &+
    \| \delta \theta_2  \|_{\tilde L^\infty_T B^\frac{d}{2}} 
    \|  \Delta^2 (\delta \phi_1 - \delta \phi_2)       
    \|_{L^1_T B^\frac{d}{2}}\bigg\}\leq 
    C\varepsilon \chi \| (\delta \phi_1,\delta \theta_1)- (\delta \phi_2,\delta \theta_2) \|_{\mathcal{K}}. 
\end{align*}
Coupling the estimates of $\mathcal{I}_1$ and $\mathcal{I}_2$ leads to the first inequality of \eqref{est-prop-final-I-II-III-new}.
Next, we turn to the estimate of $\mathcal{I}\mathcal{I}$ in \eqref{est-prop-final-I-II-III-new}:
\begin{equation}\label{eq:prop-contraction-est2}
\begin{aligned}
    &\big\|  \mathcal{I}\mathcal{I}
    \big\|_{L^1_T B^\frac{d}{2}}  
    =
    \Big\| 
        \Delta 
        \left(
        \Big(
            \frac{1}{\epsilon \theta_1 }\!-\!
            \frac{1}{\epsilon \theta_2 }
        \Big)
                (\phi_1^2\!\!-1)\phi_1
        \right)
        +
        \Delta 
        \left( 
            \frac{1}{\epsilon \theta_2 }(\phi_1 -\phi_2)
             ( \phi_1^2+ \phi_2^2 +\phi_1\phi_2 -1 )
        \right)
    \Big\|_{L^1_T B^\frac{d}{2}} \\
    &\leq 
    \underbrace{\frac{1}{\epsilon }
    \Big\| 
        \Delta 
        \left(
        \Big(
            \frac{1}{\theta_1 }\!-\!
            \frac{1}{  \theta_2 }
        \Big)
                (\phi_1^2\!\!-1)\phi_1
        \right)
    \Big\|_{L^1_T B^\frac{d}{2}}
    }_{\mathcal{I}\mathcal{I}_1}
    \!+\!
    \underbrace{
    \frac{1}{\epsilon\bar{\theta}}
    \Big\|
        \Delta 
        \left( 
        \Big(
            1 - 
            \frac{\delta \theta_2}{\bar{\theta}+\delta \theta_2}
        \Big)
            (\phi_1 \!-\!\phi_2)
            ( \phi_1^2\!+\! \phi_2^2 \!+\!\phi_1\phi_2 \!-\!1 )
        \right)
    \Big\|_{L^1_T B^\frac{d}{2}}
    }_{\mathcal{I}\mathcal{I}_2}.
\end{aligned}
\end{equation}
The first term is then developed through
\begin{align*}
     \mathcal{I}\mathcal{I}_1   \!\leq \!
    \frac{C}{\epsilon} 
    \bigg\{\!
        \Big\|
            \Delta\Big( \frac{\theta_2-\theta_1}{\theta_1\theta_2}\Big)
        \Big\|_{L^1_T B^\frac{d}{2}} 
        \big(
            \| \phi_1
            &\|_{\tilde L^\infty_T B^\frac{d}{2}}^2 
            \!\!+1
        \big)
        \|\phi_1
        \|_{\tilde L^\infty_T B^\frac{d}{2}} 
         \!\!+\!
        \Big\|
            \nabla \Big( \frac{\theta_2-\theta_1}{\theta_1\theta_2}\Big)
        \Big\|_{\tilde L^2_T B^\frac{d}{2}} 
        \big(
            \|
                \phi_1
            \|_{\tilde L^\infty_T B^\frac{d}{2}}^2  \!\!+ \!1
        \big)
        \|
               \nabla  \phi_1
        \|_{\tilde L^2_T B^\frac{d}{2}}
        \\
        &+ \Big\|
             \frac{\theta_2-\theta_1}{\theta_1\theta_2}
        \Big\|_{\tilde L^\infty_T B^\frac{d}{2}} 
        \big(
            \|
                \phi_1
            \|_{\tilde L^\infty_T B^\frac{d}{2}}  + 1
        \big)
        \big(
        \|
               \Delta  \phi_1
        \|_{ L^1_T B^\frac{d}{2}}+
        \|
               \nabla \phi_1
        \|_{\tilde L^2_T B^\frac{d}{2}}^2
        \big)
    \bigg\}. 
\end{align*}
At this stage, we use the fact that $\phi_1 -\phi_2 = \delta \phi_1 -\delta \phi_2$ and $\theta_1 -\theta_2 = \delta \theta_1 -\delta \theta_2$. Thus, applying Lemma \ref{lemma:action_of_smooth_function_on_B} and recalling that both $(\delta \phi_1,\, \delta \theta_1) $ and $(\delta \phi_2,\, \delta \theta_2) $ belong to $ \mathcal{K}_{\chi, T}$
\begin{align*}
    \mathcal{I}\mathcal{I}_1
    \leq 
    \frac{C}{\epsilon} 
    \bigg\{
        &\frac{1}{\bar{\theta}^2}
        \Big(
            \| \Delta (\delta \theta_1-\delta \theta_2 ) 
            \|_{L^1_T B^\frac{d}{2}} +
            \| \nabla (\delta \theta_1-\delta \theta_2 ) 
            \|_{\tilde L^2_T B^\frac{d}{2}} +
            \|  \delta \theta_1-\delta \theta_2 
            \|_{\tilde L^\infty_T B^\frac{d}{2}} 
        \Big)\cdot \\
        &\cdot 
        \Big(
            \big( \| \phi_0 \|_{B^{\frac{d}{2}}} + \chi\big)^2+1
        \Big)
        \big( 
            \| \phi_0 \|_{B^{\frac{d}{2}}} + \chi
        \big)
         +
         \\
        &+
        \frac{1}{\bar{\theta}^2}
        \Big(
            \| \nabla (\delta \theta_1-\delta \theta_2 ) 
            \|_{\tilde L^2_T B^\frac{d}{2}} +
            \|  \delta \theta_1-\delta \theta_2 
            \|_{\tilde L^\infty_T B^\frac{d}{2}} 
        \Big)
        \Big(
            \big( \| \phi_0 \|_{B^{\frac{d}{2}}} + \chi\big)^2+1
        \Big)
        \big( 
           \chi^2 + \chi
        \big)
        + \\ 
        &+
        \frac{1}{\bar{\theta}^2}
        \big\|  \delta \theta_1- \delta \theta_2
        \big\|_{\tilde L^\infty_T B^\frac{d}{2}} 
        \big(
            \|  \phi_0
            \|_{B^\frac{d}{2}} +\chi + 1
        \big)
        \big(
            \chi^2+\chi + \chi^4+ \chi^2
        \big)
    \bigg\}, 
\end{align*}
which eventually leads to the estimate
\begin{equation}
   \mathcal{I}\mathcal{I}_1
    \leq 
    \frac{C}{\bar{\theta}}
    \max
    \Big\{
        1,\frac{1}{\epsilon}
    \Big\}
    \big(1+ \|\phi_0 \|_{B^{\frac{d}{2}}}\big)^4
    \|(\delta \phi_1,\delta \theta_1) -(\delta \phi_2,\delta \theta_2)\|_{\mathcal{K}}.
\end{equation}
We now continue our estimate of \eqref{eq:prop-contraction-est2} by taking into account $\mathcal{I}\mathcal{I}_2$ in \eqref{eq:prop-contraction-est2}:
\begin{align*}
    \mathcal{I}\mathcal{I}_2
    \leq 
    \frac{C}{\varepsilon\bar{\theta}}
    \bigg\{
     \Big\| \Delta \frac{\delta \theta_2}{\bar{\theta}+ \delta \theta_2} 
     \Big\|_{L^1_T B^\frac{d}{2}}
     \| \phi_1 -\phi_2 \|_{\tilde L^\infty_T B^\frac{d}{2}}
     \Big(
     \|\phi_1 \|_{\tilde L^\infty_T B^\frac{d}{2}}^2+
     \|\phi_2 \|_{\tilde L^\infty_T B^\frac{d}{2}}^2+1
     \Big) 
     + 
     \Big\| \nabla \frac{\delta \theta_2}{\bar{\theta}+ \delta \theta_2} 
     \Big\|_{\tilde L^2_T B^\frac{d}{2}}
      \cdot \\ \cdot 
     \| \nabla (\phi_1 -\phi_2) \|_{\tilde L^2_T B^\frac{d}{2}}
     \Big(
     \|\phi_1 \|_{\tilde L^\infty_T B^\frac{d}{2}}^2+
     \|\phi_2 \|_{\tilde L^\infty_T B^\frac{d}{2}}^2+1
     \Big)+
     \Big\| \nabla \frac{\delta \theta_2}{\bar{\theta}+ \delta \theta_2} 
     \Big\|_{\tilde L^2_T B^\frac{d}{2}}
     \|  \phi_1 -\phi_2 \|_{\tilde L^\infty_T B^\frac{d}{2}}
     \Big(
     \|\phi_1           \|_{\tilde L^\infty_T B^\frac{d}{2}}
     +\\ +
     \|\phi_2           \|_{\tilde L^\infty_T B^\frac{d}{2}}
     \Big)
     \Big(
     \|\nabla \phi_1    \|_{\tilde L^2_T B^\frac{d}{2}}+
     \|\nabla \phi_2    \|_{\tilde L^2_T B^\frac{d}{2}}
     \Big)
      +
     \Big(
     1+    
     \Big\| \frac{\delta \theta_2}{\bar{\theta}+ \delta \theta_2} 
     \Big\|_{\tilde L^\infty_T B^\frac{d}{2}}
     \Big)
     \|  \Delta (\phi_1 -\phi_2) \|_{ L^1_T B^\frac{d}{2}}
     \Big(
     \|\phi_1 \|_{\tilde L^\infty_T B^\frac{d}{2}}^2
     +\\ +
     \|\phi_2 \|_{\tilde L^\infty_T B^\frac{d}{2}}^2+1
     \Big)
      + 
     \Big(
     1+    
     \Big\| \frac{\delta \theta_2}{\bar{\theta}+ \delta \theta_2} 
     \Big\|_{\tilde L^\infty_T B^\frac{d}{2}}
     \Big)
     \|   (\phi_1 -\phi_2) \|_{\tilde  L^\infty_T B^\frac{d}{2}}
     \Big(
     \|\phi_1 \|_{\tilde L^\infty_T B^\frac{d}{2}}
     +
     \|\phi_2 \|_{\tilde L^\infty_T B^\frac{d}{2}}
     \Big)
     \cdot \\ \cdot 
     \Big(
     \|\Delta \phi_1 \|_{L^1_T B^\frac{d}{2}}+
     \|\Delta \phi_2 \|_{L^1_T B^\frac{d}{2}}
     \Big)
      + 
     \Big(
     1+    
     \Big\| \frac{\delta \theta_2}{\bar{\theta}+ \delta \theta_2} 
     \Big\|_{\tilde L^\infty_T B^\frac{d}{2}}
     \Big)
     \|   (\phi_1 -\phi_2) \|_{\tilde  L^\infty_T B^\frac{d}{2}}
     \Big(
     \|\nabla \phi_1 \|_{\tilde L^2_T B^\frac{d}{2}}^2
     +\\ +
     \|\nabla \phi_2 \|_{\tilde L^2_T B^\frac{d}{2}}^2
     \Big)
      + 
    \Big(
     1+    
     \Big\| \frac{\delta \theta_2}{\bar{\theta}+ \delta \theta_2} 
     \Big\|_{\tilde L^\infty_T B^\frac{d}{2}}
     \Big)
     \|   \nabla (\phi_1 -\phi_2) \|_{\tilde  L^2_T B^\frac{d}{2}}
     \Big(
     \|\phi_1           \|_{\tilde L^\infty_T B^\frac{d}{2}}
     +\\ +
     \|\phi_2           \|_{\tilde L^\infty_T B^\frac{d}{2}}
     \Big)
     \Big(
     \|\nabla \phi_1    \|_{\tilde L^2_T B^\frac{d}{2}}+
     \|\nabla \phi_2    \|_{\tilde L^2_T B^\frac{d}{2}}
     \Big)
     \Bigg\}.
\end{align*}
We can further apply Lemma \ref{lemma:action_of_smooth_function_on_B} to gather
\begin{align*}
    \mathcal{I}\mathcal{I}_2
    \leq 
    \frac{C}{\bar{\theta}}
    \max
    \Big\{
        1,\frac{1}{\epsilon}
    \Big\}
    \big(1+ \|\phi_0 \|_{B^{\frac{d}{2}}}\big)^4
   \|(\delta \phi_1,\delta \theta_1) -(\delta \phi_2,\delta \theta_2)\|_{\mathcal{K}}.
\end{align*}
Coupling the estimates of $\mathcal{I}\mathcal{I}_1$ and $\mathcal{I}\mathcal{I}_2$ we obtain the second inequality of \eqref{est-prop-final-I-II-III-new}. \\
This concludes the proof of the proposition.

\end{proof}

\section{Toolbox of Harmonic Analysis}\label{section:Besov-spaces}
\noindent 
In this section we briefly recall some basics about the function spaces,  which we exploited in the previous analysis. A more detailed and exhaustive overview can be found in \cite{BCD}.

\smallskip
\noindent
First of all, let us introduce the so called “Littlewood-Paley decomposition”, based on a non- homogeneous dyadic partition of unity with respect to the Fourier variable. To set up the dyadic partition, we first fix a smooth radial function $\chi$ supported on the ball $B(0,4/3)\subset \mathbb{R}^d $, equal to $1$ in $B(0,3/4)$ and such that $r\to \chi(r\mathbf{e})$ is nonincreasing over $\mathbb{R}_+$ for all unitary vectors $\mathbf{e}\in\mathbb{R}^d$. Set
$\varphi\left(\xi\right)=\chi\left(\xi/2\right)-\chi\left(\xi\right)$ and
$\varphi_q(\xi):=\varphi(2^{-q}\xi)$ for all $q\geq 0$, $\xi \in \mathbb{R}^d$.
%
Denoting by $\tilde{\mathbb{N}}= \mathbb{N}\cup\{-1\}$, the (nonhomogeneous) dyadic blocks $(\Dd_q)_{q\in\tilde{\mathbb{N}}}$ are defined by
$$
	\Dd_q f = \mathfrak{F}^{-1}( \varphi_q  \mathfrak{F}f ),\qquad q = -1,0, \dots
$$ 
where $\mathfrak{F}$ stands for the Fourier transform in $\mathbb{R}^d$. 

\begin{definition}\label{def:Besov-spaces}
   Let $s\in \mathbb{R}$ and $1\leq p,\,r\leq \infty$. The Besov space $B_{p,r}^s=B_{p,r}^s(\mathbb{R}^d)$ consists of all distribution $f$ such that
   \begin{equation*}
       \| f \|_{B_{p,r}^s}:=
       \Big\| \big( 2^{qs}\| \Dd_q f \|_{L^p} \big)_{q\in \tilde{\mathbb{N}}} \Big\|_{\ell^r(\tilde{\mathbb{N}})}
       <\infty.
   \end{equation*}
\end{definition}
\noindent
In our analysis, we fix $B^{s}:= B^{s}_{2,1}$ for the sake of a short notation. 
We recall the definition of the so-called Chemin-Lerner spaces, whose norms are determined by
\begin{equation*}
    \| \phi \|_{\tilde L^\rho(0,T; B_{p,r}^s)} = 
    \Big\| \Big(\| \Dd_q \phi \|_{\tilde L^\rho(0,T; L^p(\mathbb{R}^d))}\Big)_{q=-1,0,\dots} \Big\|_{\ell^r}.
\end{equation*}
In contrast with the classical space $ L^\rho(0,T; B_{p,r}^s)$, we here first take the $L^\rho$-norm in time at each component $\| \Dd_q \phi \|_{L^p(\mathbb{R}^d)}$, and then perform an $\ell^r$-norm in the index $q\in \tilde{\mathbb{N}}$. 

\noindent We shall remark that If $\rho = r$ then the two function spaces coincide, namely  $ \tilde L^r(0,T; B_{p,r}^s)= L^r(0,T; B_{p,r}^s)$. This fact has been extensively used in the previous sections, when $r= \rho = 1$ (and $p=2$).

\smallskip
\noindent
We  now  state the following lemma, which addresses some a-priori estimates for the linear operator related to the phase field $\phi$ within Chemin-Lerner spaces.
\begin{lemma}\label{lemma:a-priori-estimates-for-phi-appx}
    For any viscosity $\nu>0$, any damping $\alpha\geq 0$, any function $g \in L^1(0,T; B^{d/2})$ and any initial value $\varphi_0 \in B^{d/2}$, the following Cauchy problem in $(0,T)\times \mathbb{R}^d$ admits a unique solution $\varphi$ in $\tilde L^\infty(0,T; B^{d/2})\cap L^1(0,T;B^{d/2+4})$, with $\alpha \Delta \varphi \in \tilde L^\infty(0,T; B^{d/2})$:
    \begin{equation*}
        \partial_t \varphi  - \alpha \Delta \partial_t  \varphi + \nu \Delta^2  \varphi  = g,
        \qquad 
        \varphi_{|t=0} = \varphi_0.
    \end{equation*}
    Furthermore, there exists a constant $C>0$,  which depends only on the dimension $d\geq 1$, such that
    \begin{equation*}
    \begin{aligned}
        \|  \varphi             \|_{\tilde L^\infty(0,T; B^{\frac{d}{2}})} 
        &\leq 
        C 
        \Big(
            \| \varphi_ 0   \|_{B^{\frac{d}{2}}} + 
            \| g            \|_{L^1(0,T;B^{\frac{d}{2}})}
        \Big),\\
        \alpha 
        \|  \Delta \varphi  \|_{\tilde L^\infty(0,T; B^{\frac{d}{2}})} 
        &\leq 
        C 
        \Big(
            \alpha 
            \| \Delta \varphi_ 0   \|_{B^{\frac{d}{2}}} + 
            \| g            \|_{L^1(0,T;B^{\frac{d}{2}})}
        \Big),\\  
        \nu 
        \|  \Delta^2 \varphi            \|_{L^1(0,T;B^{\frac{d}{2}})}+
        \| \partial_t  \varphi          \|_{L^1(0,T;B^{\frac{d}{2}})}
        &\leq 
        C\Big(
            \| \varphi_ 0   \|_{B^{\frac{d}{2}}}+
            \alpha 
            \|  \Delta \varphi_0  \|_{B^{\frac{d}{2}}} + 
            \| g         \|_{L^1(0,T;B^{\frac{d}{2}})}
        \Big).
    \end{aligned}
    \end{equation*}
    Finally, if $\alpha>0$ and the function $g\in \tilde L^2(0,T; B^{d/2-1})$ is of the form $g = \Delta G$, we eventually have that
    \begin{equation*}
        \| \partial_t  \varphi \|_{\tilde L^2(0,T;B^{\frac{d}{2}})} + 
        \sqrt \alpha \| \partial_t \nabla \varphi \|_{\tilde L^2(0,T;B^{\frac{d}{2}})}
        \leq 
        C
        \Big(
             \sqrt{\nu} \| \Delta \phi_0 \|_{B^{\frac{d}{2}}}+
            \frac{1}{\sqrt{\alpha}}
                       \| g \|_{ \tilde L^2(0,T;B^{\frac{d}{2}-1})}
                    \Big).
    \end{equation*}
\end{lemma}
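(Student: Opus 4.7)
The plan is to analyze the equation block-by-block via a Littlewood-Paley decomposition. Writing $\phi_q := \Delta_q \varphi$ and $g_q := \Delta_q g$, each block solves $\partial_t \phi_q - \alpha \Delta \partial_t \phi_q + \nu \Delta^2 \phi_q = g_q$, and on the frequency annulus $|\xi|\sim 2^q$ the operator $I - \alpha \Delta$ acts as the multiplier $1 + \alpha 2^{2q}$. Hence in Fourier variables the block reduces to the scalar ODE $\partial_t \hat\phi_q + \lambda_q \hat\phi_q \simeq \hat g_q/(1+\alpha 2^{2q})$, with effective decay rate
\[
\lambda_q := \frac{\nu 2^{4q}}{1+\alpha 2^{2q}}.
\]
Duhamel's formula at the Fourier level, Plancherel, and Bernstein's inequality then produce block-wise estimates, after which $\ell^1$ summation with weight $2^{qd/2}$ delivers the claimed $B^{d/2}$ and Chemin-Lerner norms.

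More precisely, Duhamel immediately gives $\|\phi_q\|_{L^\infty_T L^2} \lesssim \|\phi_q(0)\|_{L^2} + \|g_q\|_{L^1_T L^2}$; multiplying the same identity by $\alpha 2^{2q}/(1+\alpha 2^{2q}) \le 1$ yields the analogous bound for $\alpha 2^{2q}\|\phi_q\|_{L^\infty_T L^2}$, which by Bernstein produces the first two inequalities. Integrating the Duhamel identity in time, together with $\int_0^T \lambda_q e^{-\lambda_q t}\,dt \le 1$, gives $\lambda_q \|\phi_q\|_{L^1_T L^2} \lesssim \|\phi_q(0)\|_{L^2} + \|g_q\|_{L^1_T L^2}$. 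Multiplying by $(1+\alpha 2^{2q}) = \nu 2^{4q}/\lambda_q$ and applying Bernstein once more yields the bound on $\nu\Delta^2\varphi$ with right-hand side $\|\varphi_0\|_{B^{d/2}} + \alpha \|\Delta \varphi_0\|_{B^{d/2}} + \|g\|_{L^1_T B^{d/2}}$; the $\partial_t\varphi$ piece follows by isolating $\partial_t \phi_q$ from the equation and using $\|\partial_t\phi_q\|_{L^1_T L^2} \le \lambda_q\|\phi_q\|_{L^1_T L^2} + (1+\alpha 2^{2q})^{-1}\|g_q\|_{L^1_T L^2}$.

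The final estimate, which requires $\alpha > 0$ and $g = \Delta G$, calls for an energy argument tested against $\partial_t \phi_q$ rather than Duhamel analysis. Testing the equation against $\partial_t \phi_q$, integrating over $\mathbb{R}^d$, and integrating by parts twice on the source $g_q = \Delta G_q$ gives
\[
\|\partial_t \phi_q\|_{L^2}^2 + \alpha \|\nabla \partial_t \phi_q\|_{L^2}^2 + \frac{\nu}{2}\frac{d}{dt}\|\Delta \phi_q\|_{L^2}^2 = -\int_{\mathbb{R}^d} \nabla G_q \cdot \nabla \partial_t \phi_q\, dx.
\]
Absorbing the right-hand side via Young's inequality with weight $1/(2\alpha)$ and integrating in time leaves
\[
\|\partial_t \phi_q\|_{L^2_T L^2}^2 + \alpha \|\nabla \partial_t \phi_q\|_{L^2_T L^2}^2 \lesssim \nu \|\Delta \phi_q(0)\|_{L^2}^2 + \tfrac{1}{\alpha}\|\nabla G_q\|_{L^2_T L^2}^2.
\]
Bernstein converts $\|\nabla G_q\|_{L^2}$ into $2^{-q}\|g_q\|_{L^2}$, so after taking square roots and summing with the $2^{qd/2}$ weight the right-hand side becomes precisely the $B^{d/2-1}$-norm of $g$ in Chemin-Lerner $\tilde L^2_T$, matching the stated inequality.

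The main delicate point will be tracking the \emph{sharp} $\alpha$-dependence throughout, so that the factors $(1+\alpha 2^{2q})^{\pm 1}$ cancel in the right places and each a priori bound emerges with the stated asymmetric right-hand side (in particular, the $1/\sqrt{\alpha}$ loss must appear only in the $g$-contribution of the last estimate, not in the initial data). Existence and uniqueness then require no additional work: since the equation is linear and the a priori bounds hold uniformly, a standard Friedrichs regularization combined with passage to the limit yields a solution in the claimed regularity class, while applying the first estimate to the difference of two solutions gives uniqueness.
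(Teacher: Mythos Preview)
Your proposal is correct. Both you and the paper localize via Littlewood--Paley, work block by block, use Bernstein to convert frequency weights into derivatives, and for the final $\tilde L^2$ estimate test the localized equation against $\partial_t\phi_q$. The only difference is in how the first three inequalities are obtained: the paper splits $\varphi=\varphi_L+\delta\varphi$, handles $\varphi_L$ via the explicit Fourier multiplier $e^{-t|\xi|^4/(1+\alpha|\xi|^2)}$, and for $\delta\varphi$ runs an $L^2$ energy estimate (testing against $\Delta_q\delta\varphi$, then against $\partial_t\Delta_q\delta\varphi$). You instead apply Duhamel's formula directly to each block, which in effect merges the paper's two steps into one and makes the tracking of the factors $(1+\alpha 2^{2q})^{\pm1}$ more transparent. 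Both routes are standard for linear constant-coefficient problems and give the same bounds; your Duhamel argument is slightly more direct, while the paper's energy approach would generalize more easily to variable coefficients. One small point worth making explicit in your write-up is the low-frequency block $q=-1$, where $\lambda_q$ is not bounded below: the Duhamel argument still goes through there if carried out pointwise in $\xi$ (since $\int_0^T\nu|\xi|^4e^{-\lambda_\xi t}\,dt\le 1+\alpha|\xi|^2$), but the shorthand ``$\lambda_q\sim\nu 2^{4q}/(1+\alpha 2^{2q})$'' does not literally apply.
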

\begin{proof}
    We split the solution $ \varphi = \varphi_L + \delta \varphi$ of the considered equation into its homogeneous and inhomogeneous components:
    \begin{equation*}
    \begin{alignedat}{8}
        \partial_t \varphi_L  - \alpha \Delta \partial_t  \varphi_L + \nu \Delta^2  \varphi_L  
        &= 0\quad (0,T)\times \mathbb{R}^d,
        \qquad 
        \varphi_{L|t=0} &&= \varphi_0\quad &&&&\mathbb{R}^d,\\
        \partial_t \delta  \varphi  - \alpha \Delta \partial_t \delta  \varphi + \nu \Delta^2  \delta  \varphi &= g\quad (0,T)\times \mathbb{R}^d,
        \qquad 
        \delta \varphi_{|t=0} &&= 0\quad &&&&\mathbb{R}^d.
    \end{alignedat}    
    \end{equation*}
    The linear component $\varphi_L$ is determined by the Fourier multiplier 
    $\hat{\varphi}_L(t,\xi) = e^{-t|\xi|^4/(1+\alpha|\xi|^2)} \hat{\varphi}_0(\xi)$, therefore $\varphi_L$ satisfies each inequality of the Lemma, when imposing $g\equiv 0$. 
    
    \noindent We next focus on the non-homogeneous component $\delta \varphi$. We first apply the dyadic block $\Dd_q$ to the linear equation and we remark that 
    $\Dd_q \delta \varphi$ is solution of
    \begin{equation}\label{eq-phi-Dq-in-lemma}
        \partial_t \Dd_q \delta \varphi - \alpha \Delta \Dd_q \delta \varphi + \Delta^2 \Dd_q \delta \varphi = \Dd_q g
        \quad (0,T)\times \mathbb{R}^d,\qquad 
        \Dd_q \delta \varphi_{|t=0} = 0\quad \mathbb{R}^2.
    \end{equation}
    We hence multiply the equation by $\Dd_q \phi$ and integrate the result on $\mathbb{R}^d$, to gather 
    the following $L^2$-estimate:
    \begin{equation*}
        \frac{d}{dt}\frac 12 \| \Dd_q \delta \varphi(t) \|_{L^2}^2 + \alpha\frac{d}{dt}\frac 12 \| \nabla  \Dd_q \delta \varphi(t) \|_{L^2}^2 + 
        \| \Delta\Dd_q \delta \varphi(t) \|_{L^2}^2 \leq C\| \Dd_q g(t) \|_{L^2} \| \Dd_q \delta \varphi(t) \|_{L^2},
    \end{equation*}
    where the time derivative is intended in Sobolev sense, namely $t\in [0,T]\to \| \Dd_q \delta \varphi(t) \|_{L^2}^2 + \alpha \| \nabla \Dd_q \delta \varphi (t) \|_{L^2}^2$ belongs to $W^{1,1}([0,T])\hookrightarrow C([0,T])$. Next, we integrate in time and we take the supremum over $t\in [0,T]$. We thus recover that
    \begin{equation}\label{lemma:est-linear-eq-on-phi-est10}
    \begin{aligned}
        \frac{1}{2} \| \Dd_q\delta \varphi \|_{L^\infty(0,T;L^2)}^2 + 
        \frac{\alpha }{2} \| \Dd_q \nabla \delta \varphi \|_{L^\infty(0,T;L^2)}^2
        &+\int_0^T \| \Delta \Dd_q \delta \varphi(s) \|_{L^2}^2d s  \\
        &\leq 
        C \int_0^T \| \Dd_q g(s) \|_{L^2}ds\, \| \Dd_q \delta \varphi \|_{L^\infty(0,T;L^2)}
    \end{aligned}
    \end{equation}
    This last relation tells us that the $L^\infty(0,T;L^2)$-norm of   $\Dd_q\delta \varphi$ is bounded by the $L^1(0,T;L^2)$-norm of $\Dd_q g$, up to a constant which depends only on the dimension. Taking the $\ell^1$-norm in $q\in \tilde{\mathbb{N}}$ we hence recover the first inequality of the Lemma, when $\varphi_0 \equiv 0$. The second inequality is given by the following Bernstein relation, which holds for any $q\in \mathbb{N}$,
    \begin{equation*}
    \begin{alignedat}{8}
        \| \Dd_q \delta \varphi(t) \|_{L^2} &\leq \tilde C2^{-q} \| \Dd_q \nabla \delta \varphi(t) \|_{L^2},\qquad
        \| \Dd_q \Delta \delta \varphi(t)\|_{L^2}\leq \tilde C2^{q} \| \Dd_q \nabla \delta \varphi(t) \|_{L^2},
   \end{alignedat}
   \end{equation*}
   where $\tilde C>0$ depends only on the dimension $d\geq 1$. Indeed, by the inequality \eqref{lemma:est-linear-eq-on-phi-est10}, we have that
   \begin{equation*}
        \alpha  \| \nabla \Dd_q \delta \varphi \|_{L^\infty(0,T;L^2)}^2\leq 
        C 
        \int_0^T \| \Dd_q g(s) \|_{L^2}ds\,2^{-q} \| \Dd_q \nabla \varphi \|_{L^\infty(0,T;L^2)}.
    \end{equation*}
    This implies that, for any $q\in \mathbb{N}$,
    \begin{equation*}
        \alpha  \| \Delta \Dd_q \delta \varphi \|_{L^\infty(0,T;L^2)} 
        \leq \tilde C\alpha 2^q \| \nabla \Dd_q \delta \varphi \|_{L^\infty(0,T;L^2)} 
        \leq C\int_0^T \| \Dd_q g(s) \|_{L^2}ds.
    \end{equation*}
    We remark that this last inequality is valid also for $q=-1$, since for this constant we handle only the low frequencies of $\delta \varphi$. By taking the sum as $q\in  \tilde{\mathbb{N}}$, we gather that
    \begin{equation*} 
        \alpha \| \Delta \delta  \varphi \|_{L^\infty(0,T;B^s)} 
        \leq C \|  g \|_{L^1(0,T;B^s)}. 
    \end{equation*}
     The remaining estimates on $\partial_t \phi$ and $\Delta^2$ follow from similar arguments, multiplying the equation \eqref{eq-phi-Dq-in-lemma} by $\partial_t \Dd_q\phi$.

\end{proof}
\noindent 
The next result guarantees similar estimates for the heat flow. The proof can be achieved with a similar argument as the one used to prove Lemma \ref{lemma:a-priori-estimates-for-phi}.
\begin{lemma}\label{lemma:a-priori-estimates-for-theta-appx}

For any function $h\in L^1(0,T; B^{\frac{d}{2}})$ with $s \in \mathbb{R}$ and any initial value $\theta_0\in B^{\frac{d}{2}}$, the following Cauchy problem in $(0,T)\times \mathbb{R}^d$ admits a unique solution $ \theta \in \tilde L^\infty(0,T; B^{s})\cap L^1(0,T; B^{s+2})$
\begin{equation*}
    \kappa_B\partial_t   \theta - \kappa \Delta   \theta = h,\qquad 
      \theta_{|t=0} = \theta_0.
\end{equation*}
Furthermore, there exists a constant $C>0$ such that
\begin{equation*}
    \kappa_B\|  \theta \|_{\tilde L^\infty(0,T;B^{s})} + \kappa \| \Delta  \theta \|_{  L^1(0,T;B^{s})} + 
    \kappa_B\| \partial_t \theta \|_{ L^1(0,T;B^{s})}
    \leq C\Big\{ \kappa_B\| \theta_0 \|_{B^{s}}+ \| h \|_{L^1(0,T;B^{s})}\Big\}.
\end{equation*}

\end{lemma}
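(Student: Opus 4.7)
The plan is to mimic very closely the proof of Lemma \ref{lemma:a-priori-estimates-for-phi-appx}. First I would split $\theta = \theta_L + \delta\theta$ into the homogeneous heat semigroup piece (with data $\theta_0$, forcing $0$) and the inhomogeneous piece (zero data, forcing $h$). For $\theta_L$ the estimates follow directly from the explicit Fourier multiplier $\widehat{\theta_L}(t,\xi) = e^{-\kappa t |\xi|^2/\kappa_B}\widehat{\theta_0}(\xi)$, which, after localisation via $\Dd_q$ and Bernstein's inequality, gives decay $\|\Dd_q \theta_L(t)\|_{L^2} \lesssim e^{-c \kappa 2^{2q} t/\kappa_B}\|\Dd_q \theta_0\|_{L^2}$ for $q\geq 0$, and the trivial bound for $q=-1$.

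Next, for $\delta\theta$ I would apply $\Dd_q$ to the equation and test against $\Dd_q \delta\theta$ in $L^2$. This yields
\begin{equation*}
\frac{\kappa_B}{2}\frac{d}{dt}\|\Dd_q\delta\theta\|_{L^2}^2 + \kappa \|\nabla \Dd_q \delta\theta\|_{L^2}^2 \leq \|\Dd_q h\|_{L^2}\|\Dd_q\delta\theta\|_{L^2},
\end{equation*}
which, by Bernstein $\|\nabla \Dd_q\delta\theta\|_{L^2}^2 \geq c\,2^{2q}\|\Dd_q\delta\theta\|_{L^2}^2$ for $q\geq 0$, translates to the scalar ODE inequality $\kappa_B\tfrac{d}{dt}\|\Dd_q\delta\theta\|_{L^2} + c\kappa 2^{2q}\|\Dd_q\delta\theta\|_{L^2}\leq \|\Dd_q h\|_{L^2}$. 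A Duhamel representation with the kernel $e^{-c\kappa 2^{2q}(t-s)/\kappa_B}$ then gives, for $q\geq 0$,
\begin{equation*}
\kappa_B\|\Dd_q \delta\theta\|_{L^\infty_T L^2} + c\kappa\, 2^{2q}\|\Dd_q \delta\theta\|_{L^1_T L^2} \leq C\|\Dd_q h\|_{L^1_T L^2},
\end{equation*}
and the corresponding bound (without exponential gain) for the low-frequency block $q=-1$.

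At this point I would use Bernstein in reverse to replace $2^{2q}\|\Dd_q\delta\theta\|_{L^1_T L^2}$ by $\|\Delta \Dd_q\delta\theta\|_{L^1_T L^2}$ (up to harmless constants depending only on $d$), then multiply by $2^{qs}$, and take the $\ell^1$-sum in $q\in\tilde{\mathbb{N}}$. This yields simultaneously the $\tilde L^\infty_T B^s$ bound on $\delta\theta$ and the $L^1_T B^s$ bound on $\Delta \delta\theta$. The estimate on $\kappa_B\partial_t\delta\theta$ in $L^1_T B^s$ then follows from the equation itself, since $\kappa_B\partial_t \delta\theta = \kappa\Delta\delta\theta + h$, whose right-hand side has just been estimated. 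Combining with the analogous bounds for $\theta_L$ produces the desired inequality.

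Existence and uniqueness in the stated functional setting are the softer part: uniqueness follows from the a priori inequality applied to the difference of two solutions (with $h=0$ and $\theta_0 = 0$), while existence can be obtained via a standard Friedrichs truncation in Fourier space, whose approximate solutions satisfy uniform bounds in the same norms and converge by compactness/completeness of the Chemin--Lerner spaces. The main (and essentially only) technical point is the careful bookkeeping of the constants $\kappa$ and $\kappa_B$ in the Bernstein and Duhamel step, since the statement tracks them explicitly; no genuine obstacle is expected because the heat operator is linear, scalar, and self-adjoint, and the Besov framework $B^s_{2,1}$ interacts cleanly with the Littlewood--Paley decomposition used throughout the paper.
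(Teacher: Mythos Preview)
Your proposal is correct and follows exactly the approach indicated by the paper, which simply states that the proof ``can be achieved with a similar argument as the one used to prove Lemma \ref{lemma:a-priori-estimates-for-phi}.'' In fact your outline is more detailed than what the paper provides, and the steps you describe (splitting into homogeneous and inhomogeneous parts, localising via $\Dd_q$, energy/Bernstein estimates leading to a scalar ODE inequality, Duhamel, $\ell^1$-summation, and recovering $\partial_t\theta$ from the equation) are precisely the ingredients of the previous lemma's proof adapted to the heat operator.
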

\noindent
Finally, We state the following result about the effect of left composition by smooth functions on Besov spaces $B_{p,r}^s$ (cf.~Theorem 2.87 in \cite{BCD}).
\begin{lemma}\label{lemma:action_of_smooth_function_on_B}
    Let $h$ be a smooth function vanishing at $0$, $s$ be a positive real number, and $(p,r)\in [1,\infty]^2$. If $u$ belongs to $B_{p,r}^s\cap L^\infty$, then so does $h\circ u$ and we have
    \begin{equation*}
        \| h\circ u \|_{B_{p,r}^s} \leq C(s, h, \| u \|_{L^\infty})\| u \|_{B_{p,r}^s}.
    \end{equation*}
\end{lemma}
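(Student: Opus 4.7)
\textbf{Proof proposal for Lemma \ref{lemma:action_of_smooth_function_on_B}.}

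My plan is to follow the classical Bony-type approach based on a telescoping Littlewood--Paley decomposition combined with the fundamental theorem of calculus, recovering in passing the quantitative bound in terms of derivatives of $h$ at arguments of size $\|u\|_{L^\infty}$ that is invoked in the estimates of the paper (see e.g.~\eqref{estimate:comp_h_deltatheta}). Denoting $S_q u:=\sum_{q'\leq q-1}\Delta_{q'}u$, I first note that $S_{-1}u=0$ and $S_q u\to u$ in $\mathcal{S}'(\mathbb{R}^d)$, so that, using $h(0)=0$, the telescoping identity
\begin{equation*}
    h\circ u\;=\;\sum_{q\geq -1}\bigl(h(S_{q+1}u)-h(S_q u)\bigr)
\end{equation*}
is meaningful. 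The fundamental theorem of calculus then yields $h(S_{q+1}u)-h(S_q u)=m_q\,\Delta_q u$ with $m_q:=\int_0^1 h'(S_q u+\tau\Delta_q u)\,d\tau$. Since the Bernstein inequality gives $\|S_q u\|_{L^\infty}\!+\!\|\Delta_q u\|_{L^\infty}\lesssim\|u\|_{L^\infty}$, the multiplier $m_q$ is uniformly bounded in $L^\infty$ by $M_1:=\sup_{|y|\leq C\|u\|_{L^\infty}}|h'(y)|$.

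Next, I need to estimate $\|\Delta_{q'}(h\circ u)\|_{L^p}$ for each $q'$. I would split the sum into a near-diagonal regime $|q'-q|\leq N_0$ (with $N_0$ depending only on the Littlewood--Paley profile $\varphi$) and a far regime $q'>q+N_0$. On the near-diagonal piece, the trivial bound $\|\Delta_{q'}(m_q\Delta_q u)\|_{L^p}\leq C M_1\|\Delta_q u\|_{L^p}$ suffices. The main obstacle is the far-diagonal regime, where the product $m_q\Delta_q u$ has a wide spectrum coming from $m_q$ and one must extract decay in $|q'-q|$. To this end, I would integrate by parts $N=[s]+1$ times against the convolution kernel $2^{q'd}\check\varphi(2^{q'}\cdot)$, obtaining
\begin{equation*}
    \|\Delta_{q'}(m_q\Delta_q u)\|_{L^p}\;\lesssim\;2^{-N(q'-q)}\,\|\nabla^{N}(m_q\Delta_q u)\|_{L^p}.
\end{equation*}
The Faà di Bruno formula expresses $\nabla^N m_q$ as a polynomial in $\nabla^j(S_q u+\tau\Delta_q u)$ with coefficients involving $h^{(k)}$, $k\leq N+1$, evaluated at points of modulus $\leq C\|u\|_{L^\infty}$; combined with Bernstein estimates $\|\nabla^j S_q u\|_{L^\infty}\lesssim 2^{qj}\|u\|_{L^\infty}$, this yields
\begin{equation*}
    \|\Delta_{q'}(m_q\Delta_q u)\|_{L^p}\;\lesssim\;2^{-(N-s)(q'-q)}\,M_{N+1}\,2^{qs}\|\Delta_q u\|_{L^p}\cdot 2^{-q's},
\end{equation*}
where $M_{N+1}:=\max_{0\leq k\leq N+1}\|u\|_{L^\infty}^{k-1}\sup_{|y|\leq C\|u\|_{L^\infty}}|h^{(k)}(y)|$. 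Since $N-s>0$, the factor $2^{-(N-s)(q'-q)}$ is summable in $q'-q$.

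Finally, multiplying by $2^{q's}$, summing over $q$, and taking the $\ell^r$-norm in $q'$ amount to a discrete convolution of $\bigl(2^{qs}\|\Delta_q u\|_{L^p}\bigr)_q\in\ell^r$ with a summable kernel; Young's inequality then delivers
\begin{equation*}
    \|h\circ u\|_{B^s_{p,r}}\;\leq\; C(s,h,\|u\|_{L^\infty})\,\|u\|_{B^s_{p,r}},
\end{equation*}
with a constant of the precise shape $M_{[s]+2}$ used in the main body of the paper. The delicate point where care is required is the combinatorial bookkeeping in the far-diagonal regime: one must verify that at most $[s]+1$ derivatives are ever needed on the smooth function $h$, so that the constant involves only finitely many $h^{(k)}$ and the estimate genuinely depends on $h$ through its $\mathcal{C}^{[s]+2}$ norm on a ball of radius proportional to $\|u\|_{L^\infty}$.
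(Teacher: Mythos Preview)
The paper does not prove this lemma; it is stated as a direct citation of Theorem~2.87 in \cite{BCD}, and only the explicit form of the constant (displayed just below the statement) is extracted from that reference. Your proposal is precisely a sketch of the Bahouri--Chemin--Danchin proof: Meyer's first linearisation via the telescoping identity $h(u)=\sum_q m_q\Delta_q u$ with $m_q=\int_0^1 h'(S_qu+\tau\Delta_q u)\,d\tau$, followed by the near/far-diagonal splitting and the $N=[s]+1$ integrations by parts in the far regime. So your approach is correct and coincides with the argument the paper is pointing to.

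Two small cosmetic points. First, in the displayed inequality after the integration by parts, the factor obtained directly from the kernel is $2^{-Nq'}$, not $2^{-N(q'-q)}$; the missing $2^{Nq}$ is produced only in the next step when you apply Bernstein and Fa\`a di Bruno to bound $\|\nabla^N(m_q\Delta_q u)\|_{L^p}$. Second, in your definition of $M_{N+1}$ the index $k$ should start at $1$ (otherwise $k=0$ gives $\|u\|_{L^\infty}^{-1}$); with $k=l+1$ this then matches exactly the constant \eqref{ineq:composition-besov} recorded in the paper.
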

\noindent 
The constant $C(s, h, \| u \|_{L^\infty})$ continuously depends on $\| u \|_{L^\infty}$ and it is increasing in this norm. An explicit formulation of the constant $C(s, h, \| u \|_{L^\infty})$ can be computed from the proof of Theorem 2.87 in  \cite{BCD}:
\begin{equation}\label{ineq:composition-besov}
    \|h\circ u \|_{B_{p,r}^s}   \leq 
    C_s 
    \Big( 
        \max_{l \in \{0,\dots, [s]+1\}} \| u \|_{L^\infty}^l \sup_{|x| \leq C\| u \|_{L^\infty}} |h^{(l+1)}(x)|
    \Big)
    \| u \|_{B_{p,r}^s},
\end{equation}
    where $l$ is an index for the derivatives of the function $h$,
$C_s>0$ depends only on the regularity $s>0$ and the dimension $d$. 

\section{Conclusion and relation with previous models}\label{sec:conclusion}

\noindent 
The scheme that we have developed shows how specific physical quantities, together with their kinematics and the law of thermodynamics, determine the overall system of equations, which is consistent with the main law of thermodynamics. Specifically, these quantities correspond to
\begin{itemize}
    \item the free energy $\psi(\phi, \nabla \phi, \theta)$ of the media,
    \item the kinematics of $\phi$ (which is fixed, being a conserved quantity)
    and the transport of $\theta$ (which varies and changes the final form of the system),
    \item the rate of dissipation $\mathcal{D}$  in terms of the
     effective microscopic velocity (which in this paper takes the form of a Darcy's law),
    \item the heat flux ${\bf q}$, which in this manuscript we have consider by the Fourier's law.
\end{itemize}
The first two quantities determine the conservative forces for the phase field, while the rate of dissipation and Onsager's principle unlock the dissipative forces with the equation for $\phi$. 
The equation for the temperature $\theta$ is then determined by the second law of thermodynamics for the local entropy $s(\phi, \theta)$, whose entropy production $\theta \Delta^*(\phi, \theta)$ can be computed from the first law of thermodynamics for the conservation of the internal energy.

\smallskip 
\noindent 
We next briefly exploit our modelling technique when computing certain systems, that were previously proposed in literature.   We first mention that our proposed technique can not derive the equations of Caginalp in \cite{Caginalp90}. The author indeed coupled the Cahn-Hilliard equation with an heat balance of the following  form  
\begin{equation*}
\begin{aligned}
    \tau \partial_t \phi = -\xi^2 \Delta \frac{\delta W}{\delta \phi} \quad  \text{with}\quad 
    W(\phi, \nabla \phi, \theta) = 
    \xi^2 \frac{|\nabla \phi|^2}{2} + 
    \frac{(\phi^2-1)^2}{8a}- 2\theta \phi
    \quad\text{and}\quad 
    \partial_t \theta + \frac{l}{2}\partial_t \phi = k \Delta \theta,
\end{aligned}
\end{equation*}
for suitable parameters $\tau,\, \xi,\, a,\, l,\, k$. In particular, this set of equations does not preserve the total energy $e = W+ \theta s$ of the system, given by
\begin{equation*}
    e = W + \theta s = \xi^2 \frac{|\nabla \phi|^2}{2} + 
    \frac{(\phi^2-1)^2}{8a},
\end{equation*}
while our technique ensures that this quantity is conserved by the first law of thermodynamics. 
Our overview shall therefore begin with a special case of the work of Alt and Paw{\l}ow in \cite{alt1992mathematical}. The simplest form of their model reads as follows
\begin{equation}\label{eq:alt-pawlow}
    \partial_t \phi - m\Delta \Big(\frac{\tilde{\mu}}{\theta}\Big)= 0,\qquad \partial_t e +\dv \Big(\kappa (\theta)\nabla \Big(\frac{1}{\theta}\Big)\Big)  = 0
\end{equation}
Here $e$ stands for the internal energy  (which in \cite{alt1992mathematical} is denoted by $E$), i.e. $e = \psi + \theta s$, where we recall that $\psi = \psi(\phi,\nabla \phi, \theta)$ is the free energy and $s=-\partial_\theta \psi$ is the local entropy. 
With an abuse of notation, we have here denoted by $\tilde{\mu}$ their chemical potential, which is defined by the variation of a rescaled energy density, i.e. $\tilde{\mu} = \partial_\phi \psi - \theta \dv ( \partial_{\nabla \phi} \psi/\theta)$.
Furthermore, their heat flux is defined as 
${\bf q}(\theta) = -\kappa(\theta)\nabla (1/\theta)$. 

\noindent 
We aim to show that our proposed approach derives a slight variation of their model.  
We mention that the choice of the form \eqref{eq:alt-pawlow} is for the sake of a compact presentation, we however claim that the general setting in \cite{alt1992mathematical} can also be derived by this formalism (with some minor adjustments). 
To proceed, we need first to exploit several assumptions. 

\noindent
At the level of the least action principle in Section \ref{sec:least-action-principle}, the temperature should be considered related only to the background, i.e. it does not evolve with the flow $\theta(t,x(t,X)) = \theta_0(X)$. 

\noindent
Furthermore the action in Section \ref{sec:least-action-principle} and the dissipation of Section \ref{sec:onsager} shall be considered as
\begin{equation}\label{eq:final-sec-action-dissipation}
\begin{aligned}
    \mathcal{A}(x) &= 
    \int_{\Omega} \frac{\psi(\phi, \nabla \phi, \theta)}{\theta}dx := 
    \int_{\Omega_0}\frac{\psi(\mathcal{G}(t,X))}{\theta_0(X)}\det F(t,X) dX,\qquad
    \mathcal{D}(u) &= \frac{1}{m}
    \int_{\Omega}  \phi^2 |u|^2dx,
\end{aligned}
\end{equation}
where $\mathcal{G}(t,X)$ has been defined in \eqref{action-in-lagrangian}.
Here the Action $\mathcal{A}$ simply relates to the recasted free energy $\psi(\phi, \nabla \phi, \theta)$, while the dissipation $\mathcal{D}$ is still of Darcy type. 
Then applying Onsager's principle of  Section \ref{sec:least-action-principle}, one gathers the relation
\begin{equation*}
    \phi \nabla \left( \frac{\tilde{\mu}}{\theta} \right) = {\bf f}_{\rm cons} = -{\bf f}_{\rm diss} = -\frac{1}{m}\phi^2 u, 
\end{equation*}
which implies the first equation of \eqref{eq:alt-pawlow}, thanks to the continuity equation $\partial_t \phi + \dv ( u \,\phi) = 0$. Furthermore, an analogous computation as in \eqref{eq:derivation-of-energy-eq}, leads to the following relation for the entropy production:
\begin{equation*}
     \theta\Delta^* = \phi^2 |u|^2 + \theta {\bf q}\cdot \nabla  \Big( \frac{1}{\theta} \Big)
     = \Big| \nabla \Big( \frac{\tilde{\mu}}{\theta} \Big)\Big|^2
     + \theta \kappa(\theta)\Big| \nabla 
    \Big( 
            \frac{1}{\theta} 
    \Big)\Big|^2.
\end{equation*}
Therefore, our final equation for the energy eventually reads
\begin{equation*}
    \partial_t e = -\dv {\bf q} - \dv \Big(u\phi \mu  - \partial_{\nabla \phi} \psi \partial_t \phi \Big) \\
    = -\dv \Big(\kappa (\theta)\nabla \Big(\frac{1}{\theta}\Big)\Big)   - \dv \Big(u\phi \mu  - \partial_{\nabla \phi} \psi \partial_t \phi \Big),
\end{equation*}
which differs from the one proposed by Alt and Pawlow in \eqref{eq:alt-pawlow}, because of the term $ \dv \big(u\phi \mu  - \partial_{\nabla \phi} \psi \partial_t \phi \big)$. We claim that this term shall be interpreted as work produced by the system. One shall remark however that it vanishes when integrating in space. 

\smallskip
\noindent 
A second model we intend to address is a special case of the system proposed by Miranville and Schimperna in \cite{MR2151731} (cf.~also \cite{Marveggio21} for a mathematical analysis):
\begin{equation}\label{Miranville-Schimperna-eq}
\begin{alignedat}{4}
    \partial_t \phi &= - \dv h ,\quad &&\text{with}\quad h = 
    - \nabla \left( \frac{\mu}{\theta}\right),\\
    \partial_t e  &= -\dv {\bf q} -\dv (\mu h - \partial_{\nabla \phi}\psi \partial_t \phi),\quad &&\text{with}\quad  -
    {\bf q}+ \mu h = \kappa(\theta) 
    \nabla \left( \frac{1}{\theta}\right).
\end{alignedat}
\end{equation}
We shall remark that here the chemical potential $\mu = \delta \psi/\delta \phi$ is not rescaled by $1/\theta$. For this system, the action $\mathcal{A}$ shall be taken as in \eqref{eq:final-sec-action-dissipation}, while the dissipation shall be reformulated as 
\begin{equation*}
    \mathcal{D}(u) = 
    \int_{\Omega} \Big|\phi u - \partial_{\nabla \phi} \psi\cdot  \nabla \Big( \frac{1}{\theta}\Big) \Big|^2.
\end{equation*}
Applying Onsager's principle of  Section \ref{sec:least-action-principle}, one gathers the relation
\begin{equation*}
    \phi\nabla \Big(   \frac{\mu}{\theta}  + \partial_{\nabla \phi} \psi \cdot \nabla \Big( \frac{1}{\theta} \Big)\Big)= {\bf f}_{\rm cons} = -{\bf f}_{\rm diss} = -\phi^2 u+ \phi \nabla\Big(  \partial_{\nabla \phi} \psi \cdot \nabla \Big( \frac{1}{\theta} \Big)\Big).
\end{equation*}
Coupling this relation with the continuity equation for $\phi$ leads to  $\phi u = h$ and the first equation of \eqref{Miranville-Schimperna-eq}. Moreover, an analogous computation as in \eqref{eq:derivation-of-energy-eq}, leads to the following identity for the entropy production:
\begin{equation*}
     \theta\Delta^* = \phi^2 |u|^2 + \theta {\bf q}\cdot \nabla  \Big( \frac{1}{\theta} \Big)
     = \Big| \nabla \Big( \frac{\mu}{\theta} \Big)\Big|^2
     + \theta \kappa(\theta)\Big| \nabla \Big( \frac{1}{\theta} \Big)\Big|^2.
\end{equation*}
Therefore \eqref{eq:derivation-of-energy-eq} corresponds exactly to the energy equation of \eqref{Miranville-Schimperna-eq}.

\smallskip
\noindent  
In conclusion, in this paper we have derived two thermodynamically consistent models of the Cahn-Hilliard equations with variable temperature. Our derivation is based on several mechanical and thermodynamical principles. Furthermore, we have highlighted how certain assumptions on the transport of the temperature change the final structure of our models. Our analysis about local-in-time classical solutions selects one model above the others, in which the transport of the temperature is determined by the background and not by the effective velocity. This last section has also shown how certain models that have been previously proposed in literature can also be derived by the formalism of this article.

\subsection*{Data Availability Statement} 
Data sharing is not applicable to this article, since no datasets were generated or analysed during the current study.
 
\subsection*{Acknowledgment} 
The authors were partially supported by NSF grant DMS-1950868, and the United States–Israel Binational Science Foundation (BSF) grant {\#}2024246, as well as the  Deutsche Forschungsgemeinschaft (DFG, German Research Foundation), project number 391682204. 

\printbibliography
\end{document}